\renewenvironment{abstract}[1]
  {\selectlanguage{#1}%
   \quotation \footnotesize\textsc\abstractname.}
  {\endquotation \bigskip}
\newtheorem{theorem}{Theorem}[section]
\newtheorem{lemma}[theorem]{Lemma}
\newtheorem{proposition}[theorem]{Proposition} 
\theoremstyle{definition}
\theoremstyle{remark}
\newtheorem{remark}[theorem]{Remark}
\newtheorem{corollary}[theorem]{Corollary}   
\newcommand{\norm}[1]{\left\lVert #1 \right\rVert}
\newcommand{\vol}[1]{\textup{vol}( #1 )}
\newcommand{\abs}[1]{\left\lvert #1 \right\rvert}
\renewcommand{\v}[2]{{v}( #1, #2)}
\newcommand{\inprod}[2]{\left \langle #1,#2 \right\rangle}
\def \G {{\Gamma}}
\def \R {{\mathbb R}}
\def \H {{\mathbb H}}
\def \C {{\mathbb C}}
\def \Z {{\mathbb Z}}
\def \g {{\gamma}}
\def \N {{\mathbb N}}
\def \dim {{\hbox{dim}}}
\numberwithin{equation}{section}
\author{Morten S. Risager}
\address{Department of Mathematical
  Sciences, University of Copenhagen, Universitetsparken 5, 2100
  Copenhagen \O, Denmark}
\email{risager@math.ku.dk}
\author{Anders S\"odergren}
\address{Department of Mathematical
  Sciences, University of Copenhagen, Universitetsparken 5, 2100
  Copenhagen \O, Denmark}
\email{sodergren@math.ku.dk}
\title[Angles in hyperbolic lattices]{Angles in hyperbolic lattices:
  The pair correlation density}
\date{\today}
\thanks{The first author was supported by a Sapere Aude grant from The
  Danish Council for Independent Research. The second author was
  supported by a grant from The Danish Council for Independent
  Research and FP7 Marie Curie Actions-COFUND (grant id:
  DFF-1325-00058).}
\subjclass[2010]{Primary 11N45; Secondary 11P21, 20H10}
\keywords{Pair correlation;  hyperbolic 
  $n$-space; hyperbolic lattice points}
\begin{document} 
\maketitle
\vspace{-.41cm}
\begin{abstract}{english}
It is well known  that the angles in a lattice acting on hyperbolic $n$-space become equidistributed. In this paper we determine a formula for the pair correlation density for angles in such hyperbolic lattices. Using this formula we determine, among other things, the asymptotic behavior of the  density function in both the small and large variable limits. This extends earlier results by Boca, Pa{\c{s}}ol, Popa and Zaharescu and Kelmer and Kontorovich in dimension 2 to general dimension $n$. Our proofs use the decay of matrix coefficients together with a number of careful estimates, and lead to effective results with explicit rates. 
\end{abstract}
\section{Introduction}
Let $\G$ be a discrete co-finite subgroup of  $G=\hbox{SO}_0(n,1)$ and
let $z_0\in \H^n$. In its most basic form the hyperbolic lattice point
counting problem seeks
to estimate the size of the orbit $\Gamma z_0$ inside some expanding
region. This
problem has been studied -- when the region is a hyperbolic ball  --
by several people
\cite{Delsarte:1942a,Huber:1956,Huber:1959a,Patterson:1975a,Gunther:1980a,
  LaxPhillips:1982a, Good:1983b,
  Levitan:1987a, PhillipsRudnick:1994a}, and precise asymptotics
are known. By now it is also well-established that the angles in a hyperbolic lattice
are equidistributed \cite{Nicholls:1983a, Good:1983b,Boca:2007a, RisagerTruelsen:2010, GorodnikNevo:2012}. 

More refined angle statistics has been studied only recently: Boca,
Pa{\c{s}}ol, Popa and Zaharescu \cite{BocaPasolPopaZaharescu:2014, BocaPopaZaharescu:2013} studied the pair correlation
statistics for hyperbolic angles when $\Gamma=\hbox{PSL}_2(\Z)$ and
$z_0=i$ or $z_0=e^{i\pi/3}$, and gave conjectures for all lattices
$\G\subseteq \hbox{PSL}_2(\R)\cong \hbox{SO}_0(2,1)$ and all base
points $z_0$. These conjectures were later resolved by Kelmer and Kontorovich \cite{KelmerKontorovich:2013}.

In this  paper, among other things, we generalize and extend these
results to general
lattices acting on $n$-dimensional hyperbolic space. To be precise:
Let $z_0=e_{n+1}\in \H^n$ be the origin of (the hyperboloid model of)
$\H^n$. Consider
$$\norm{g}^2=2\cosh(d(ge_{n+1},e_{n+1})),$$ 
where $d(z,w)$ denotes
the hyperbolic distance between $z,w\in \H^n$, and let $v(g,g')$ denote
the hyperbolic angle between $ge_{n+1}$ and $g'e_{n+1}$ (see Section
\ref{sec:basicangles}). Let $K=\hbox{Stab}_G(e_{n+1})$. For simplicity we
assume that $\Gamma\cap K$ is trivial,  although this is not a serious restriction. We define
\begin{equation*}
N_\G(Q):=\G\cap B_Q,
\end{equation*}
where $B_Q:=\{g\in G : \norm{g}\leq Q\}$. For the hyperbolic lattice
point problem the main interest is in the asymptotic behavior of $\#N_\G(Q)$ as $Q\to \infty$. In
the above terminology Lax and Phillips proved \cite[Thm.\
1]{LaxPhillips:1982a} (see also \cite[Thm.\ II]{Levitan:1987a},
\cite[Thm.\ 4.1]{ElstrodtGrunewaldMennicke:1988a}, and
Remark \ref{LPremark}) that
\begin{equation}\label{LP-intro}\#N_\Gamma(Q)=\frac{\vol{B_{Q}}}{\vol{\Gamma\backslash
    G}}+O\big(\vol{B_{Q}}^{1-\delta}\big), \textrm{ as }Q\to\infty.
\end{equation} 
Here $\vol{B_Q}$ is the (appropriately normalized) Haar measure of $B_Q\subseteq G$ (see Section \ref{Haar-section}).  The size of $\delta$ usually depends either directly or indirectly on a spectral gap, i.e. the size of the least non-zero element in the spectrum of the automorphic Laplacian.

 Consider an element $\g'\in \G$ and a real number $\xi>0$. By a short
heuristic argument based on known equidistribution and point counting results (see Section \ref{heuristics}), we expect about
$\xi^{n-1}$ elements in the set 
\begin{equation*}
\left\{\g\in N_\G(Q)\backslash\{\g'\} :  \v{\g}{\g'}<
\frac{2k_{n,\G}}{Q^{2}}\xi\right\}.
\end{equation*}
Here $k_{n,\G}$ is an explicit constant (see \eqref{kn-constant}) which we compute as part of
the heuristics. 
Taking averages over $\g'$ with $\norm{\g'}\leq Q$, we are led to
investigate the pair correlation counting function
\begin{equation}\label{pair-correlation-function-intro}
R_{2,Q}(\xi):=\frac{\#N_{2,Q}(\xi)}{\# N_\G(Q)},
\end{equation}
where 
\begin{equation}
\label{enum}  N_{2,Q}(\xi):=\left\{\g,\g'\in N_\G(Q):\gamma^{-1}\gamma'\notin K, \v{\g}{\g'}< \frac{2k_{n,\G}}{Q^{2}}\xi\right\}.
\end{equation}
We note that without the condition $\gamma^{-1}\gamma'\notin K$ in the
definition of $N_{2,Q}(\xi)$, the number $R_{2,Q}(\xi)$ increases by
exactly 1 since $\G\cap K$ is trivial.

We want to investigate how much $R_{2,Q}(\xi)$ deviates from $\xi^{n-1}$
in the limit as $Q\to \infty$. Our first result is the following theorem which asserts that the limit
as $Q\to \infty$
does indeed exist:

\begin{theorem}\label{main theorem}
Let $n\geq2$ and let $\G\subseteq G$ be a lattice as above.  Then,
as $Q\to \infty$, the function $R_{2,Q}(\xi)$ converges to a
differentiable limit  $R_2(\xi)$ whose derivative satisfies
\begin{align*}\label{g2formula preliminary}
g_2(\xi)=\frac{d}{d\xi}R_2(\xi)=\sum_{M\in\Gamma}F_{\xi}(d(Me_{n+1},e_{n+1})),
\end{align*}
where $F_\xi$ is given explicitly in \eqref{Fxi}. 
Moreover,  there exists $\nu>0$ such that  
\begin{equation*}
R_{2,Q}(\xi)=R_{2}(\xi)+O_\xi(Q^{-\nu}).
\end{equation*}
\end{theorem}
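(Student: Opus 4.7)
The plan is to decompose the pair count by relative position $M=\gamma^{-1}\gamma'$, approximate each term via effective lattice point counting, and control the sum over $M$ by combining decay of matrix coefficients with a geometric truncation.

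First, I would rewrite
\begin{equation*}
\#N_{2,Q}(\xi) = \sum_{M \in \Gamma, \, M \notin K} N_M(Q,\xi),
\end{equation*}
where $N_M(Q,\xi)$ counts $\gamma \in \G$ with $\gamma, \gamma M \in B_Q$ and $\v{\g}{\g M} < \tfrac{2k_{n,\G}}{Q^{2}}\xi$. The key geometric observation that makes each $N_M$ tractable is that $d(\gamma e_{n+1}, \gamma M e_{n+1}) = d(e_{n+1}, M e_{n+1}) =: r_M$ is independent of $\gamma$. Thus one is counting isometric placements of a triangle with a distinguished vertex at $e_{n+1}$, a fixed opposite side of hyperbolic length $r_M$, and opening angle at $e_{n+1}$ bounded above by $\tfrac{2k_{n,\G}}{Q^{2}}\xi$. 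In particular only $M$ with $r_M \leq 2 r_Q$ contribute, where $2\cosh(r_Q)=Q^{2}$.

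For each such $M$, I would approximate $N_M(Q,\xi)$ by
\begin{equation*}
\widetilde N_M(Q,\xi) := \frac{1}{\vol{\G\backslash G}} \int_G \mathbf{1}_{B_Q}(g)\,\mathbf{1}_{B_Q}(gM)\,\mathbf{1}\!\left[\v{g}{gM} < \tfrac{2k_{n,\G}}{Q^{2}}\xi\right] dg,
\end{equation*}
with an effective error term. Following the strategy that yields \eqref{LP-intro}, I would smooth the characteristic function of the region at some scale $\varepsilon$, express $N_M(Q,\xi)$ as an inner product on $\G\backslash G$, invoke decay of matrix coefficients (quantified by a spectral gap) to bound the deviation of the smoothed count from its mean, and use a Sobolev-type estimate to control the smoothing loss. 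A $KAK$-decomposition adapted to $e_{n+1}$ then evaluates $\widetilde N_M(Q,\xi)$ explicitly; after dividing by $\#N_\G(Q)$ via \eqref{LP-intro} and differentiating in $\xi$, the contribution of $M$ is exactly $F_\xi(r_M)$, which also pins down the constant $k_{n,\G}$.

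The main obstacle is summing the per-$M$ estimates, since the number of $M$ with $r_M \leq 2 r_Q$ grows like $Q^{2(n-1)}$ and would overwhelm any naïve per-$M$ error. I would split the $M$-sum at a parameter $T=T(Q)$: for $r_M \leq T$ apply the effective per-$M$ estimate above; for $T < r_M \leq 2 r_Q$ exploit that in the resulting hyperbolic triangle the opening angle at $e_{n+1}$ is forced to be bounded below (as an explicit function of $r_M$ and $r_Q$), making the angle constraint $\tfrac{2k_{n,\G}}{Q^{2}}\xi$ much more restrictive than the two ball constraints alone, and yielding a crude but power-saving bound from Lax--Phillips applied to a suitably smaller region. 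Optimizing $T(Q)$ produces the claimed exponent $\nu>0$. Finally, differentiability of $R_2(\xi)$ and the stated formula for $g_2(\xi)$ follow by applying the same analysis to the incremental ratio $(R_{2,Q}(\xi+h)-R_{2,Q}(\xi))/h$; absolute convergence of $\sum_M F_\xi(r_M)$ is a direct consequence of \eqref{LP-intro} and yields continuity of $g_2$ and hence differentiability of $R_2$.
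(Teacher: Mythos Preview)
Your outline matches the paper's proof closely: the decomposition over $M=\gamma^{-1}\gamma'$, the count-to-volume step via decay of matrix coefficients (the paper's Lemma~\ref{counts-volumes}), the $KAK$ volume computation (Theorem~\ref{volume-theorem}), and the truncation at a parameter $T$ are all exactly what the paper does.

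One point requires correction. Your tail argument asserts that for large $r_M$ the opening angle at $e_{n+1}$ is \emph{forced to be bounded below}. This is false as stated: a hyperbolic triangle with vertex at $e_{n+1}$, opposite side $r_M$ large, and both adjacent sides $\leq r_Q$ can still have arbitrarily small opening angle---just take one adjacent side short and the other near $r_Q$. The hyperbolic law of cosines (Proposition~\ref{change-under-right-mult}) gives
\[
\cosh r_M = \cosh t(\gamma)\cosh t(\gamma') - \sinh t(\gamma)\sinh t(\gamma')\cos v \approx \cosh\bigl(t(\gamma)-t(\gamma')\bigr)
\]
when $v$ is small, so a small angle together with large $r_M$ forces $|t(\gamma)-t(\gamma')|$ to be large, i.e.\ one of $\gamma,\gamma'$ has norm $O(Q/T)$. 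The paper's Lemma~\ref{error term E} then sums over this ``short'' vertex via Lax--Phillips, and for each fixed short $\gamma'$ bounds the number of $\gamma\in N_\Gamma(Q)$ in the thin cone by $O_\xi(\log Q)$ (Lemma~\ref{dyadic lemma}), yielding $\mathcal E_{Q,T}(\xi)=O_\xi\bigl(Q^{2(n-1)}\log Q/T^{2(n-1)}\bigr)$. Your proposed mechanism would not produce a usable bound without this observation.

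A minor remark: differentiability of $R_2$ does not require redoing the analysis on incremental ratios. The volume computation (Theorem~\ref{volume-theorem}) already expresses the main term as $\int_0^{\xi} f_\zeta(t(M))\,d\zeta$, so $R_2(\xi)$ emerges directly as an integral and $g_2$ is obtained by the fundamental theorem of calculus.
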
 

\begin{remark}
The limit function $R_2(\xi)$ is called \emph{the pair correlation
function}, and its derivative $g_2(\xi)$ is called \emph{the pair
  correlation density}. An explicit estimate on the rate of
convergence (i.e.  $\nu$ in Theorem \ref{main theorem}) is given in terms of a spectral gap:  We write the eigenvalues of the Laplace-Beltrami operator $-\Delta$ on $\G\backslash G$ in increasing order as $0=\lambda_0<\lambda_1\leq\lambda_2\leq\ldots$ and choose 
$s_0\in(\frac{n-1}{2},n-1)$ so that $\lambda_1>s_0(n-1-s_0)$. The size
of $\nu$ is directly related to the size of $n-1-s_0$.  We refer to
Theorem \ref{main theorem precise version} for details. For $n=2$ our
convergence rate is identical to that proved in \cite{KelmerKontorovich:2013}.
\end{remark}

The fact that the function $F_\xi$ in Theorem \ref{main theorem} can be given explicitly  (see
  \eqref{Fxi}, \eqref{DEFOFF}, and Remark \ref{n=2,3}), allows us to determine
 the asymptotic behavior of the pair correlation density:

\begin{theorem}\label{main theorem-asymptotics}
 Fix $s_0$ as above. We have
\begin{equation*}
g_2(\xi)=(n-1)\xi^{n-2}+O\left(\xi^{n-2+\frac{2(s_0-n+1)}{n+1}}\right),
\quad \textrm{as $\xi\to\infty$.}
\end{equation*}
\end{theorem}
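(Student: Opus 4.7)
The plan is to work directly with the explicit formula
\[
g_2(\xi)=\sum_{M\in \Gamma} F_\xi(d(Me_{n+1},e_{n+1}))
\]
from Theorem \ref{main theorem}, exploiting the closed-form expression for $F_\xi$ given in \eqref{Fxi}. Writing $d_M:=d(Me_{n+1},e_{n+1})$ for brevity, the strategy is to split the lattice sum at a cutoff $T=T(\xi)$ to be optimized, treat the short-range part by summation by parts against an effective lattice-point count, and bound the long-range part directly using the decay of $F_\xi$ as $t\to\infty$.

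For the short range $d_M\leq T$, summation by parts reduces matters to integrating $F_\xi$ against the lattice-point counting measure. Using the effective Lax--Phillips estimate underlying \eqref{LP-intro}, namely $\#\{M\in\Gamma:d_M\leq t\}=c_\Gamma e^{(n-1)t}+O(e^{s_0 t})$ with $s_0$ as in the remark following Theorem \ref{main theorem}, I can replace the sum by
\[
(n-1)c_\Gamma \int_0^T F_\xi(t)\,e^{(n-1)t}\,dt
\]
up to an error controlled by $\norm{F_\xi}_\infty e^{s_0 T}$ together with a boundary term. Extending the integral to $[0,\infty)$ and plugging in the explicit form of $F_\xi$ should produce the predicted main term $(n-1)\xi^{n-2}$; this is consistent with the heuristic of Section \ref{heuristics} that motivated the scaling $2k_{n,\Gamma}/Q^2$ in the definition of $R_{2,Q}(\xi)$ and hence the exponent $n-1$ in the number of expected close pairs.

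The main obstacle is the joint control of two tails. The long-range lattice contribution $\sum_{d_M>T}F_\xi(d_M)$ needs the decay of $F_\xi(t)$ for large $t$ combined with the growth $e^{(n-1)t}$ of lattice points, and the corresponding integral tail $\int_T^\infty F_\xi(t)e^{(n-1)t}\,dt$ must match it. For this I would use the explicit formula for $F_\xi$ to establish uniform bounds of the form $F_\xi(t)\ll \xi^{\alpha}e^{-\beta t}$ on suitable ranges, with exponents determined by the geometry of the angle condition in \eqref{enum}. Combining these with the counting error $\ll\xi^{\alpha'}e^{s_0 T}$ from summation by parts and optimizing over $T\sim c\log\xi$ should yield the quoted rate. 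The exponent $2(s_0-n+1)/(n+1)$ should emerge from balancing the geometric tail against the spectral error; verifying the precise exponents and constants, in particular the appearance of $n+1$ in the denominator, is the most delicate part of the argument and will require a careful quantitative analysis of $F_\xi$ rather than just its leading behavior.
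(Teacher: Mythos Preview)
Your approach via Abel summation against the lattice point count is a natural alternative to the paper's, but there is a gap in the key input: the claimed estimate $\#\{M\in\Gamma:d_M\leq t\}=c_\Gamma e^{(n-1)t}+O(e^{s_0 t})$ is not what \eqref{LP-intro} provides. The pointwise hyperbolic lattice count carries, beyond the contributions of exceptional eigenvalues, a residual error (e.g.\ of order $e^{2t/3}$ for $n=2$) that does not shrink as the spectral gap widens. Thus for large gap one may take $s_0$ close to $(n-1)/2$, but the pointwise counting error remains bounded below by this residual. With the true pointwise bound $O(e^{\sigma t})$ in place of $O(e^{s_0 t})$, your summation-by-parts scheme would produce an exponent governed by $\sigma$, not by $s_0$ as in the theorem, and there is then no mechanism to generate the quotient $2(s_0-n+1)/(n+1)$; this is precisely why you could not see where the $n+1$ comes from.

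The paper avoids the sharp count by smoothing. It writes $\sum_{M}f_\xi(t(M))$ as $\big\langle\sum_M f_\xi(t(g_1^{-1}Mg_2)),\Psi_1\otimes\Psi_1\big\rangle_{\Gamma\backslash G\times\Gamma\backslash G}$ for a spherical approximate identity $\Psi_1$ at scale $\delta$, controlling the replacement error via Lemma~\ref{perturbationlemma} by $O(\delta\,\xi^{n-2})$. Unfolding turns the inner product into $\int_G f_\xi(t(g))\langle\pi(g)\Psi_1,\Psi_1\rangle\,dg$, and now decay of matrix coefficients (Corollary~\ref{dirty-estimate-1}) gives $\langle\pi(g)\Psi_1,\Psi_1\rangle=\vol{\Gamma\backslash G}^{-1}+O(\delta^{-n}\norm{g}^{2(s_0-n+1)})$; the factor $\delta^{-n}$ is the $L^2$-cost of localizing $\Psi_1$ in the $n$-dimensional ball $B_{\delta_1}$. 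Integrating the error against $f_\xi$ via Theorem~\ref{theoremonasymptotics}\eqref{upperbounds} yields $O(\delta^{-n}\xi^{2s_0-n+\epsilon})$, and balancing $\delta\,\xi^{n-2}$ against $\delta^{-n}\xi^{2s_0-n}$ forces $\delta=\xi^{2(s_0-n+1)/(n+1)}$, which is the source of the $n+1$. The main term then comes from the explicit evaluation $\int_G f_\xi(t(g))\,dg=\frac{\omega_n}{(n-1)^2}\xi^{n-2}+O(\xi^{n-4}+\xi^{-1+\epsilon})$ in Theorem~\ref{theoremonasymptotics}\eqref{asymptotics}.
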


\begin{remark}
Integrating the asymptotic formula in Theorem \ref{main
  theorem-asymptotics}, we immediately find that
\begin{equation*}
R_2(\xi)=\xi^{n-1}+O\left(\xi^{n-1+\frac{2(s_0-n+1)}{n+1}}\right),
\quad \textrm{ as $\xi\to \infty$}.
\end{equation*}
\end{remark}

Turning now instead to the limit $\xi\to 0$, Kelmer and Kontorovich
\cite{KelmerKontorovich:2013} proved that in the 2-dimensional case
the pair correlation density tends to a  \emph{non-zero} value. Using the above explicit description of $g_2$, we show that this happens \emph{only} in this case.

\begin{theorem}\label{g_2at0theorem}
The pair correlation density converges to zero as $\xi$ tends to zero
if and only if $n\neq 2$.
\end{theorem}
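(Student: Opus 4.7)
The plan is to use Theorem~\ref{main theorem}, which writes
$$g_2(\xi)=\sum_{M\in\G}F_\xi\bigl(d(Me_{n+1},e_{n+1})\bigr),$$
with $F_\xi$ given explicitly in \eqref{Fxi}. The core of the argument is to determine the pointwise limit of $F_\xi(d)$ as $\xi\to 0^+$ for each fixed $d>0$, and then to interchange this limit with the summation over $M\in\G$ by dominated convergence.

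Starting from the explicit form of $F_\xi$, I would extract the small-$\xi$ expansion
$$F_\xi(d)=(n-1)\xi^{n-2}c(d)+o(\xi^{n-2})\qquad(\xi\to 0^+),$$
where $c(d)>0$. The factor $(n-1)\xi^{n-2}$ has the same geometric origin as in Theorem~\ref{main theorem-asymptotics}: it is the derivative of the measure of a cap of angular radius $\propto\xi$ on the sphere $S^{n-1}$, which scales as $\xi^{n-1}$. For $n\geq 3$ this factor forces $F_\xi(d)\to 0$, while for $n=2$ we have $\xi^{n-2}=1$ and $F_\xi(d)\to c(d)>0$. To exchange limit and sum, I would establish a uniform bound $|F_\xi(d)|\leq H(d)$ for $\xi\in(0,1]$ and $d>0$, with $\sum_{M\in\G}H\bigl(d(Me_{n+1},e_{n+1})\bigr)<\infty$. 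The required exponential decay of $H$ in $d$ is inherited from the same decay that produces absolute convergence of the series in Theorem~\ref{main theorem}, now needed uniformly in $\xi$.

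By dominated convergence,
$$\lim_{\xi\to 0^+}g_2(\xi)=\sum_{M\in\G}\lim_{\xi\to 0^+}F_\xi\bigl(d(Me_{n+1},e_{n+1})\bigr).$$
For $n\geq 3$ every term vanishes, so $\lim g_2=0$. For $n=2$ each term equals $c(d_M)>0$, and since $\G\cap K=\{e\}$ and $\G$ contains non-identity elements, at least one summand is strictly positive, so $\lim g_2>0$. The main obstacle is the pointwise analysis of $F_\xi(d)$: one must extract the factor $\xi^{n-2}$ cleanly from the integral in \eqref{Fxi} and verify positivity of $c(d)$, especially in dimension two, which is precisely the source of the dichotomy in the theorem. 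Both are accessible from the explicit formulas \eqref{Fxi} and \eqref{DEFOFF} together with Remark~\ref{n=2,3}, while the uniform-bound step is a routine refinement of the convergence estimates already used to prove Theorem~\ref{main theorem}.
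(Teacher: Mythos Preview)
Your proposal is correct and follows essentially the same route as the paper. The uniform bound $|F_\xi(d)|\leq H(d)$ you want is precisely Lemma~\ref{stuff}\eqref{stuff-onepointfive}, which gives $f_\xi(l)=O(\xi^{n-2}B^{-2(n-1)})$ for $\xi\leq C(l)$; summing $B_M^{-2(n-1)}\asymp\|M\|^{-4(n-1)}$ over the lattice converges, and for $n\geq 3$ the factor $\xi^{n-2}$ sends the sum to zero. The only difference is that for $n=2$ the paper simply quotes the explicit positive limit $g_2(0)=\frac{\vol{\G\backslash G}}{\pi}\sum_{t(M)>0}(e^{2t(M)}-1)^{-1}$ from Kelmer--Kontorovich, whereas you propose to read it off directly from the closed form of $f_\xi$ in Remark~\ref{n=2,3}; expanding $l-\log\bigl(A+\sqrt{B^2-\xi^2}\bigr)$ to order $\xi^2$ indeed yields $f_\xi(l)\to 2/(e^{2l}-1)$, so your plan recovers the same constant.
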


We observe that the pair correlation function $R_2(\xi)$  depends
heavily both on the discrete group $\G$ and the choice of base point
for the lattice point problem.\footnote{Recall that a change of base point in our problem can be achieved by conjugating the group $\G$.} However, once the group and the base point are fixed, the pair correlation function is \emph{uniform} in the following sense: Let  $\mathcal U$ denote the hyperbolic unit sphere centered at $e_{n+1}$. Let $\mathcal S\subset \mathcal U$ be a spherical cap and define $\mathcal C$ to be the hyperbolic cone specified by the vertex $e_{n+1}$ and the cross-section $\mathcal S$. Then, if we restrict our attention in (both the numerator and the denominator of) \eqref{pair-correlation-function-intro} to elements in $\G$ corresponding to points in the orbit $\G e_{n+1}$ lying in $\mathcal C$, then the limit as $Q\to\infty$ still exists and equals the same function $R_2(\xi)$ achieved in Theorem \ref{main theorem}. In order to give a precise statement, we define
\begin{equation*}
N_{\G,\mathcal C}(Q):=\{\gamma\in N_{\G}(Q) : \gamma e_{n+1}\in \mathcal C\}
\end{equation*}
and  
\begin{align}\label{pair-correlation-function-cone}
N_{2,\mathcal C,Q}(\xi):=\left\{\g,\g'\in N_{\G,\mathcal C}(Q) : \gamma^{-1}\gamma'\notin K,  \v{\g}{\g'}< \frac{2k_{n,\G}}{Q^{2}}\xi\right\}.\nonumber
\end{align}      
    
\begin{theorem}\label{restricted-main-theorem}
Let $n\geq 2$ and let $\G\subset G$ be a lattice as above. Let $\mathcal S\subset\mathcal U$ be a spherical cap and denote the hyperbolic cone specified by the vertex $e_{n+1}$ and the cross-section $\mathcal S$ by $\mathcal C$. Then  
\begin{equation*}
\lim_{Q\to\infty}\frac{\#N_{2,\mathcal C,Q}(\xi)}{\#N_{\G,\mathcal
    C}(Q)}
=R_2(\xi).
\end{equation*}
In particular the limit exists, is differentiable with derivative $g_2(\xi)$, and is independent of the cone $\mathcal C$. 
\end{theorem}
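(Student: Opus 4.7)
The strategy is to show that restricting the base point to the cone $\mathcal{C}$ scales both numerator and denominator of $\#N_{2,\mathcal{C},Q}(\xi)/\#N_{\Gamma,\mathcal{C}}(Q)$ by the same asymptotic factor, namely the angular fraction $|\mathcal{S}|/|\mathcal{U}|$ of the cross-section in the unit sphere, so that this factor cancels and the limit agrees with the unrestricted pair correlation $R_2(\xi)$.

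For the denominator, $\#N_{\Gamma,\mathcal{C}}(Q)$ is a hyperbolic lattice point count in a sector. Applying the spectral machinery used to establish \eqref{LP-intro} to a smoothed indicator of the cone produces effective equidistribution of the orbit directions in the form
\[\#N_{\Gamma,\mathcal{C}}(Q) = \frac{|\mathcal{S}|}{|\mathcal{U}|}\,\#N_\Gamma(Q) + O\!\bigl(\vol{B_Q}^{1-\delta'}\bigr)\]
for some $\delta'>0$ depending on the spectral gap.

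For the numerator we use a sandwich argument. Set $\epsilon_Q := 2k_{n,\G}\xi/Q^2$, and let $\mathcal{C}^\pm$ denote the cones whose cross-sections are $\mathcal{S}$ thickened or shrunk by angular distance $\epsilon_Q$ on $\mathcal{U}$. If $\gamma e_{n+1}\in\mathcal{C}^-$ and $\v{\gamma}{\gamma'}<\epsilon_Q$, then automatically $\gamma' e_{n+1}\in\mathcal{C}$; conversely every pair in $N_{2,\mathcal{C},Q}(\xi)$ has $\gamma e_{n+1}\in\mathcal{C}$. Denoting by $\widetilde N_{\mathcal{D}}(Q,\xi)$ the set of pairs $(\gamma,\gamma')$ with $\gamma\in N_{\Gamma,\mathcal{D}}(Q)$, $\gamma'\in N_\Gamma(Q)$, $\gamma^{-1}\gamma'\notin K$ and $\v{\gamma}{\gamma'}<\epsilon_Q$, we obtain the two-sided bound
\[\#\widetilde N_{\mathcal{C}^-}(Q,\xi)\leq \#N_{2,\mathcal{C},Q}(\xi)\leq \#\widetilde N_{\mathcal{C}}(Q,\xi).\]
The key step is to show that the argument proving Theorem \ref{main theorem} still applies with a cone restriction on the base point, giving
\[\#\widetilde N_{\mathcal{D}}(Q,\xi)=\frac{|\mathcal{D}\cap\mathcal{U}|}{|\mathcal{U}|}\,\#N_{2,Q}(\xi)+o\!\bigl(\#N_\Gamma(Q)\bigr)\]
for $\mathcal{D}\in\{\mathcal{C}^-,\mathcal{C}\}$. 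Since $|\mathcal{C}^\pm\cap\mathcal{U}|\to|\mathcal{S}|$ as $Q\to\infty$, both bounds then furnish the same leading term $\frac{|\mathcal{S}|}{|\mathcal{U}|}\#N_{2,Q}(\xi)$, and dividing by the denominator asymptotic yields $R_2(\xi)$.

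The main technical obstacle is the refined estimate for $\#\widetilde N_{\mathcal{D}}(Q,\xi)$. Following the proof of Theorem \ref{main theorem}, one decomposes the count according to $M=\gamma^{-1}\gamma'\in\Gamma\setminus K$; for each $M$ the inner count over $\gamma$ becomes a sector lattice point problem in a subset of $B_Q$, and effective equidistribution of the orbit in directions reintroduces the factor $|\mathcal{D}\cap\mathcal{U}|/|\mathcal{U}|$ in the main term. The difficulty is to obtain an error term sufficiently uniform in $M$ that summing over $M$ (and handling the contribution of $M$ for which the cone boundary interacts with the support of the angular integrand) does not destroy the main term; this should be feasible using the same spectral input (decay of matrix coefficients) that drives the proof of Theorem \ref{main theorem}.
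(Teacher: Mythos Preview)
Your proposal is correct and follows essentially the same route as the paper, with one organizational difference worth noting.

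The paper decomposes $\#N_{2,\mathcal C,Q}(\xi)$ directly as $\sum_{M}\#\Gamma\cap\mathcal R_{M,\mathcal C}(Q,k_{n,\Gamma}\xi)$, where $\mathcal R_{M,\mathcal C}$ requires \emph{both} $ge_{n+1}$ and $gMe_{n+1}$ to lie in $\mathcal C$; it then drops the second cone condition inside the volume computation (replacing $\mathcal R_{M,\mathcal C}$ by the one-sided set $\mathcal S_{M,\mathcal C}$, at a cost of $O(\xi Q^{2(n-2)})$) and proceeds as in the unrestricted case. Your sandwich $\widetilde N_{\mathcal C^-}\leq \#N_{2,\mathcal C,Q}\leq \widetilde N_{\mathcal C}$ accomplishes the same reduction at the counting level rather than the volume level; after that, both arguments are identical.

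The genuine technical point you flag --- uniformity of the count-to-volume error in $M$ after imposing the cone condition --- is exactly what the paper addresses, and the fix is worth naming: because the sets $\mathcal S_{M,\mathcal D}(Q,\xi)$ (your cone-restricted inner regions) are no longer left $K$-invariant, the fattening/slimming must be done with $D_\delta\times D_\delta$ rather than $B_{\delta_1}\times D_\delta$, which forces the test function $\Psi_2\otimes\Psi_2$ in place of $\Psi_1\otimes\Psi_2$ and also requires enlarging the cone by $O(\delta)$ in the fattened set. This yields the analogue of Lemma~\ref{counts-volumes} with slightly worse (but still power-saving) exponents. With that in hand, your outline goes through.
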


\begin{remark}
It is clear that our techniques can handle also pair correlation functions corresponding to cones $\mathcal C$ specified by more general sets $\mathcal S\subset\mathcal U$. However, for simplicity, we have chosen not to give the most general statement possible.
\end{remark}

\begin{remark} The function $F_\xi(l)$ -- which by Theorem \ref{main
    theorem} and Theorem \ref{restricted-main-theorem} determines the
  pair correlation density $g_2$ both in the whole space and in
  sectors -- depends, apart from $\xi$ and $l$,  only on $n$ and $\vol{\Gamma\backslash G}$. It follows that $g_2$ depends on the group $\G$ only through the sequence $$\{d(\gamma e_{n+1},e_{n+1}):\gamma \in \G\}.$$ In fact, it turns out that this sequence determines and is determined  by $g_2$ and the volume $\vol{\G\backslash G}$. Using this we show, in Section \ref{friday-afternoon}, that $g_2$ determines and is determined by certain spectral data. We refer to Section \ref{friday-afternoon} for precise statements.
\end{remark}

The idea of the proof of Theorem
  \ref{main theorem} (which is the basis for most of the subsequent
  results) is as
  follows: Considering the results in
\cite{BocaPasolPopaZaharescu:2014, BocaPopaZaharescu:2013,
  KelmerKontorovich:2013}, we expect the pair correlation density to
be expressible as a sum over
$M=\g^{-1}\g'$. We therefore write  $\#N_{2,Q}(\xi)$  (see
\eqref{enum}) as  
\begin{equation}\label{basic-counting}
\#N_{2,Q}(\xi)=\sum_{\substack{M\in \G\\ M\notin K}}\#\G\cap \mathcal R_M(Q,k_{n,\G}\xi),
\end{equation}
where 
\begin{equation*}
\mathcal R_M(Q,\xi):=\left\{g\in B_Q: \norm{gM}\leq Q,
\v{g}{gM}< \frac{2\xi}{Q^2}\right\}.
\end{equation*}
Our goal (following \cite{KelmerKontorovich:2013}) is then to show that
 the number
$\#\G\cap \mathcal R_M(Q,k_{n,\G}\xi)$ can be approximated by
$\vol{R_M(Q,k_{n,\G}\xi)}/\vol{\G\backslash G}$, and to compute  approximations
for $\vol{R_M(Q,k_{n,\G}\xi)}$. 

  The structure of the paper is as follows: In Section \ref{prereq} we
  review known theory and results needed in the proofs of the main
  theorems. In Section \ref{stability} we show several stability results
  for angles and norms, which are later used for certain approximation
  arguments. In Section \ref{vols} we find expressions for
  $\vol{R_M(Q,k_{n,\Gamma}\xi)}$ given in terms of the functions
  $F_\xi(d(Me_{n+1},e_{n+1}))$, and in Section \ref{counts-to-vols} we
    show how these volumes are related to $\#\G\cap \mathcal
    R_M(Q,k_{n,\G}\xi)$. In Section \ref{proofs}  we tie these investigations
    together and complete the proofs of all the main theorems.  Several of
    these results use basic properties of the function $F_\xi$, and
    we state and prove such properties in Appendix \ref{appendix}.

In a very recent paper, Marklof and Vinogradov
\cite{MarklofVinogradov:2014} show how the
mixing property of the geodesic flow can be used to obtain information
about the distribution of directions in a hyperbolic lattice. They show convergence of all mixed
moments of the appropriate counting functions, which in particular allows them to conclude
convergence of pair correlations. In fact their results capture all local statistics (e.g. gap or nearest neighbor
distributions).  In the present paper, we
 focus on the pair correlation and use information on the decay of matrix coefficients to get more explicit and precise results.    

\subsection*{Notation}

Throughout this manuscript we consider $n$ and $\Gamma$ as fixed. In
all estimates, the implied constants may depend on $n$ and $\Gamma$;
any other dependence will be specified.

\section{Prerequisites}\label{prereq}

\subsection{The hyperboloid model of hyperbolic $n$-space}

Let $n\geq2$. We begin by recalling that
$$\hbox{SO}(n,1)=\left\{g\in\hbox{SL}_{n+1}(\R) : g^tJg=J\right\},$$ 
where $J=\hbox{diag}(I_n,-1)$ and $I_n$ is the $n\times n$ identity
matrix. By definition this is the subgroup of $\hbox{SL}_{n+1}(\R)$ leaving the (symmetric and
non-degenerate) bilinear form 
$$\inprod{x}{y}=x^tJy\qquad (x,y\in\R^{n+1})$$ 
invariant. In the present paper we will be mainly interested in the group $G:=\hbox{SO}_0(n,1)$ defined as the connected component of $\hbox{SO}(n,1)$ containing the identity. 

The group $G$ acts transitively by matrix multiplication on the set
$$\H^n:=\left\{x\in \R^{n+1} : \inprod{x}{x}=-1,\, x_{n+1}>0\right\},$$ 
which is the upper sheet of a two-sheeted hyperboloid.  If we define the metric $d$ on $\H^n$ by the relation 
\begin{equation*}
\cosh d(x,y)=-\inprod{x}{y},
\end{equation*}
then $\H^n$ is a model of hyperbolic $n$-space, i.e.\ a
maximally symmetric,  simply connected Riemannian manifold of
dimension $n$ and constant sectional curvature $-1$. In this model, the group
$G$ acts as the full group of orienting preserving isometries on hyperbolic $n$-space.

\subsection{Cartan decomposition} 

Consider the groups
\begin{equation*}
K:=\left\{\left( \begin{array}{cc}
k' &  \\
 & 1  \end{array} \right) : k'\in \hbox{SO}(n)\right\}
\end{equation*}
and
\begin{equation*}
A:=\left\{a_t =\left( \begin{array}{ccc}
\cosh t & &\sinh t \\
&I_{n-1}&\\
\sinh t & &\cosh t  \end{array} \right) : t\in \R\right\}.
\end{equation*}
 The Cartan decomposition of $G$ gives that $G=KA^+K$, where $A^+:=\{a_t\in A : t\geq0\}$. In particular every element $g=(g_{i,j})\in G$ can be written as 
\begin{equation}
g=k_g a_{t(g)}k_g'\label{cartan}
\end{equation}
where $k_g,k_g'\in K$ and  $a_{t(g)}\in A^+$. Here\footnote{Here and throughout, we let $e_{j}$ denote the $j$th standard basis vector in $\R^{n+1}$.} 
$$t(g)=\cosh^{-1}(g_{n+1,n+1})=d(ge_{n+1},e_{n+1}).$$
We note that $K$ is the stabilizer of $e_{n+1}\in \H^n$. 

The decomposition \eqref{cartan} is not unique as the centralizer of $A$ in
$K$, $M:=Z_K(A)$, is non-trivial. In concrete terms:
\begin{equation*}
M=\left\{\left( \begin{array}{ccc}
1 & & \\
&m'&\\
& &1  \end{array} \right) : m'\in \hbox{SO}(n-1)\right\}\subset K.
\end{equation*}
An element $g\in G$ can be written as $g=k_g
a_{t(g)}k'_g$ and $g=\tilde k_g a_{\tilde t(g)}\tilde k'_g $, if and only if
$t(g)=\tilde t(g)$, $\tilde k_g=k_gm$ and $\tilde k_g'=m^{-1}k_g'$ for
some $m\in M$. 

\subsection{Basic properties of hyperbolic angles}\label{sec:basicangles}

The formula for the hyperbolic angle between two geodesic segments intersecting at a point $z\in\H^n$ is in general quite complicated (see, e.g., \cite[Sect.\ 3.2]{Ratcliffe:1994a}). However, we will restrict our attention to the case where the vertex of the angle is located at the point $e_{n+1}\in\H^n$, which simplifies the formulas considerably. 

For $x,y\in \H^n\backslash\{e_{n+1}\}$, we define the corresponding (unsigned) angle $\v{x}{y}\in[0,\pi]$, based at the point $e_{n+1}$, via the relation
\begin{equation*}
\cos(\v{x}{y})=\frac{u\cdot v}{\sqrt{u\cdot u}\sqrt{v\cdot v}},
\end{equation*}
where $x=(u,t), y=(v,s)$ for appropriate choices of $u,v\in \R^n$, $s,t\in \R$, and $\cdot$ is
the usual Euclidean inner product on $\R^n$. In addition, for $g,g'\in G\setminus K$, we define, by an abuse of notation, the
angle between them  by
\begin{equation*}
\v{g}{g'}:=\v{g e_{n+1}}{g' e_{n+1} }.
\end{equation*}

Fixing the point $N:= (1,0,\cdots ,0,\sqrt{2})^t\in\H^n$, 
it is straightforward to verify that if $g=ka_tk'$ with $k,k'\in K$, $t>0$ and $k=(k_{i,j})$, then
\begin{equation*}
\cos(\v{g e_{n+1}}{N})=k_{1,1}. 
\end{equation*}
Furthermore, we will find it useful to fix the matrix 
\begin{equation*}
g_N:=  \left( \begin{array}{ccc}
\sqrt{2} & &1 \\
&I_{n-1}&\\
1& &\sqrt{2}  \end{array} \right)\in G
\end{equation*}
 satisfying $g_Ne_{n+1}=N$. 

It is an exercise in linear algebra to verify the following
(non-unique) decomposition of $K$:

\begin{lemma}\label{MkMdecomplemma}
Let $n>2$. Every $k=(k_{i,j})\in K$ can be written as 
$$k=m_1k^{\theta} m_2,$$ 
where $m_i\in M$ and
\begin{equation*}
k^{\theta}:=\left( \begin{array}{ccc}
\cos{\theta} & \sin{\theta}& \\
-\sin{\theta}&\cos{\theta}&\\
& &I_{n-1}  \end{array} \right)
\end{equation*}
is uniquely determined by $\theta=\v{ka_t}{g_N}$ for any $t>0$. We
have also $\cos \theta=k_{1,1}$.
\end{lemma}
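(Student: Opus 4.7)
The plan is to exploit the fact that $M$ is (essentially) $\mathrm{SO}(n-1)$ acting on the last $n-1$ coordinates of the $\mathrm{SO}(n)$-block, and that this action is transitive on unit spheres in $\R^{n-1}$ precisely because $n-1\geq 2$. Given $k\in K$ (which I think of as $k=\mathrm{diag}(k',1)$ with $k'\in\mathrm{SO}(n)$), I would use left multiplication by some $m_1\in M$ to normalize the first column of $k$, then right multiplication by some $m_2\in M$ to normalize the first row, and finally check that the remaining $(n-2)\times(n-2)$ block can be absorbed into another factor from $M$.

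In detail: write $k e_1=(k_{1,1},w',0)^t$ with $w'\in\R^{n-1}$; since $|w'|^2=1-k_{1,1}^2$, pick $\theta\in[0,\pi]$ with $\cos\theta=k_{1,1}$, so that $\sin\theta=|w'|$. Using $n-1\geq 2$, there exists $m_1\in M$ whose $\mathrm{SO}(n-1)$-part sends $w'$ to $(-\sin\theta,0,\ldots,0)^t$; then $A:=m_1^{-1}k$ has first column $(\cos\theta,-\sin\theta,0,\ldots,0)^t$. The first row of $A$ has the form $(\cos\theta, r')$ with $r'\in\R^{n-1}$ of length $\sin\theta$, and again by transitivity of $\mathrm{SO}(n-1)$ on that sphere we find $m_2\in M$ such that the first row of $B:=Am_2$ is $(\cos\theta,\sin\theta,0,\ldots,0)$. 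Orthogonality of $B$ then forces the rest of the first two rows and columns to vanish beyond those entries, and $B$ takes the block form $\mathrm{diag}\bigl(R(\theta),C,1\bigr)$ with $R(\theta)$ the $(1,2)$-rotation and some $C\in\mathrm{SO}(n-2)$. Writing $B=k^{\theta}\cdot\mathrm{diag}(1,1,C,1)$ and observing that the latter factor lies in $M$, I can collect terms into the desired $k=m_1k^\theta m_2$ decomposition.

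To identify $\theta$ via $\v{ka_t}{g_N}$, I would compute directly: $a_te_{n+1}=\sinh t\cdot e_1+\cosh t\cdot e_{n+1}$, so $ka_te_{n+1}$ has $\R^n$-projection $\sinh t\cdot ke_1$, and $N=g_Ne_{n+1}$ has $\R^n$-projection $e_1$. The definition of the angle at $e_{n+1}$ then yields
\begin{equation*}
\cos\v{ka_t}{g_N}=\frac{(\sinh t)\, ke_1\cdot e_1}{|\sinh t|\cdot 1}=k_{1,1}=\cos\theta
\end{equation*}
for any $t>0$. Since both angles lie in $[0,\pi]$, they agree, which simultaneously establishes $\theta=\v{ka_t}{g_N}$, its independence of $t$, the identity $\cos\theta=k_{1,1}$, and the uniqueness of $\theta$ (hence of $k^\theta$).

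The only genuine obstacle is the $n=2$ obstruction: the normalization steps require $\mathrm{SO}(n-1)$ to act transitively on the unit sphere in $\R^{n-1}$, which fails for $n-1=1$. This is exactly why the lemma excludes $n=2$, and no cleverness can rescue it — when $n=2$, $M$ is trivial and $K$ itself is already the one-parameter group of rotations, so the statement would be trivially true in a degenerate way but is not needed.
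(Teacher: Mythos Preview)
Your argument is correct and is exactly the kind of routine verification the authors had in mind: in the paper the lemma is stated without proof, prefaced only by ``It is an exercise in linear algebra to verify the following (non-unique) decomposition of $K$.'' So there is nothing to compare against beyond confirming that your steps go through.

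Two small points worth tightening. First, there is a harmless inverse slip: if you want $A=m_1^{-1}k$ to have first column $(\cos\theta,-\sin\theta,0,\ldots,0)^t$, then it is $m_1^{-1}$ (not $m_1$) whose $\mathrm{SO}(n-1)$-part should send $w'$ to $(-\sin\theta,0,\ldots,0)^t$. Second, your block-diagonal conclusion $B'=\mathrm{diag}(R(\theta),C)$ uses $\sin\theta\neq 0$ to force the second row and column; when $\sin\theta=0$ (i.e.\ $k_{1,1}=\pm 1$) you have extra freedom in choosing $m_1,m_2$ and should say a word about why the decomposition still holds---e.g.\ for $k_{1,1}=1$ one has $k\in M$ already, and for $k_{1,1}=-1$ one writes $k'=\mathrm{diag}(-1,D)$ with $\det D=-1$ and factors $D$ through $\mathrm{diag}(-1,I_{n-2})$. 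The angle identification via $ka_te_{n+1}$ and $N$ is exactly the computation the paper records just above the lemma.
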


\begin{remark}
Note that in the case $n=2$, we only get the trivial statement
\begin{equation*}
K=\left\{k^{\theta} : \theta\in[-\pi,\pi)\right\}.
\end{equation*}
From now on, whenever we use the decomposition in Lemma \ref{MkMdecomplemma}, we will only give statements and provide calculations for the case $n>2$. However, using the above observation it should always be clear how to change a statement (calculation) in order to arrive at a valid statement (calculation) also when $n=2$.
\end{remark}

Note that if $g\in G$ has Cartan decomposition $g=m_1k^{\theta(g)}m_2a_{t(g)}m_3k^{\varphi(g)}m_4$
with $m_i\in M$, $t(g)>0$ and $\theta(g),\varphi(g)\in[0,\pi]$, then 
\begin{equation*}
g^{-1}=\tilde m_1k^{\pi -\varphi(g)}\tilde m_2a_{t(g)}\tilde m_3k^{\pi
-\theta(g)}\tilde m_4
\end{equation*} 
for some $\tilde m_i\in M$. 

The following elementary properties of $\v{\cdot}{\cdot}$ are very useful, yet straightforward to verify:

\begin{proposition}\label{angle-props}
Let $n>2$ and let $g,g'\in G\setminus K$. Then the following hold:
\begin{enumerate}[(i)]
\item $\v{kg}{kg'}=\v{g}{g'}$, for every $k\in K$,
\item $\v{gk}{g'}=\v{g}{g'}$, for every $k\in K$,
\item $\v{g}{a_t}=\v{g}{g_N}$ for every $t> 0$, 
\item $\v{g}{a_{-t}}=\pi - \v{g}{a_{t}}$ for every $t> 0$, 
\item $\v{g}{g'}=\v{g'}{g}$,
\item \label{trig-ineq}$\v{g}{g'}\leq \v{g}{g''}+\v{g''}{g'}$ for every $g''\in G\setminus K$,
\item $\v{g}{g_N}=\cos^{-1}(g_{1,n+1} /\sinh t(g))$,
\item $\v{g}{g_N}=\theta(g)$ and $\v{g^{-1}}{g_N}=\pi-\varphi(g)$,
\item $\abs{\theta(g)-\theta(g')}\leq \v{g}{g'}$,
\item $\abs{\varphi(g)-\varphi(g')}\leq \v{g^{-1}}{g'^{-1}}$.
\end{enumerate}
\end{proposition}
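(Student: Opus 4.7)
The plan is to work directly from the definition
\[
\cos\v{x}{y}=\frac{u\cdot v}{\sqrt{u\cdot u}\sqrt{v\cdot v}},
\qquad x=(u,t),\ y=(v,s),
\]
i.e.\ to identify $\v{x}{y}$ with the Euclidean angle at the origin between the spatial parts of $x$ and $y$. The ten items will then follow either by one-line symmetry observations, a short matrix computation, or an application of the spherical triangle inequality. I would group them accordingly.

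Items (i), (ii), (iv), (v) are essentially immediate. For (i), any $k\in K$ acts on $\R^{n+1}$ via the block form $k=\mathrm{diag}(k',1)$, so it rotates the spatial part by $k'\in\mathrm{SO}(n)$ and preserves Euclidean inner products. For (ii), $ke_{n+1}=e_{n+1}$ yields $gke_{n+1}=ge_{n+1}$. For (iv), $a_{-t}e_{n+1}$ has spatial part which is the negative of that of $a_te_{n+1}$. Item (v) is the symmetry of the inner product.

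Items (iii), (vii), (viii) are direct computations. For (iii), both $a_te_{n+1}=(\sinh t,0,\ldots,0,\cosh t)^t$ and $N=(1,0,\ldots,0,\sqrt{2})^t$ have spatial direction $(1,0,\ldots,0)$. For (vii), the identity $\inprod{ge_{n+1}}{ge_{n+1}}=-1$ forces the spatial part of $ge_{n+1}$ to have Euclidean norm $\sinh t(g)$, and its first coordinate is $g_{1,n+1}$, giving the formula at once. For (viii), I would use the Cartan decomposition in the form $g=m_1k^{\theta(g)}m_2a_{t(g)}m_3k^{\varphi(g)}m_4$; since $m_3k^{\varphi(g)}m_4\in K$ stabilizes $e_{n+1}$ and $m_2$ fixes the first and last coordinates, $ge_{n+1}=m_1k^{\theta(g)}a_{t(g)}e_{n+1}$. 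A direct multiplication with the matrix $k^{\theta(g)}$ from Lemma \ref{MkMdecomplemma} yields a vector whose first coordinate is $\cos\theta(g)\sinh t(g)$; since $m_1$ fixes the first coordinate, (vii) then gives $\v{g}{g_N}=\theta(g)$. The second formula $\v{g^{-1}}{g_N}=\pi-\varphi(g)$ is obtained by applying the same reasoning to $g^{-1}=\tilde m_1k^{\pi-\varphi(g)}\tilde m_2a_{t(g)}\tilde m_3k^{\pi-\theta(g)}\tilde m_4$.

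The triangle inequality (vi) follows from observing that the direction map $x\mapsto u/|u|$ sends $\H^n\setminus\{e_{n+1}\}$ onto the Euclidean unit sphere $S^{n-1}\subset\R^n$, and by definition $\v{x}{y}$ is the spherical (arc-length) distance between the images; then (vi) is just the triangle inequality on $S^{n-1}$. Finally, (ix) and (x) are consequences of (vi), (viii) and the invariance (ii): writing $\theta(g)=\v{g}{g_N}$ and applying (vi) with $g''=g_N$ gives $|\theta(g)-\theta(g')|\leq\v{g}{g'}$, and the analogous argument applied to $g^{-1},g'^{-1}$ together with $\varphi(g)=\pi-\v{g^{-1}}{g_N}$ gives (x).

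I do not anticipate a serious obstacle; if there is one, it is the bookkeeping in (viii), where one must keep track of the fact that the various $M$-factors surrounding $k^{\theta(g)}$ and $a_{t(g)}$ leave the first coordinate of the resulting vector in $\R^{n+1}$ unchanged. Everything else is either a definitional unwinding or a transcription of the spherical triangle inequality.
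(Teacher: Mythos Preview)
Your proposal is correct and matches the paper's own treatment: the paper simply declares these properties ``straightforward to verify'' without giving any argument, and what you have written is exactly the direct verification from the definition of $\v{\cdot}{\cdot}$ and the Cartan/$Mk^\theta M$ decompositions that the authors have in mind. The only minor remark is that your appeal to (ii) in the proof of (ix) is unnecessary---(vi) and (viii) alone suffice---but this does not affect correctness.
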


We will find it convenient to define the angle $\v{g}{g'}$ also when at least one of $g,g'\in K$. We let
\begin{align*}
\v{g}{g'}:=
\begin{cases}
\v{g_N}{g'}& \text{if $g\in K$ but $g'\notin K$},\\
\v {g}{g_N}& \text{if $g'\in K$ but $g\notin K$},\\
0 &\text{if $g,g'\in K$}.
\end{cases}
\end{align*}
In other words, if one (both) of the elements $g$ and $g'$ occurring in the expression $\v{g}{g'}$ belongs to $K$, then we exchange that element (those elements) with $g_N$ in order to interpret the angle $\v{g}{g'}$. We admit that the above extension is rather arbitrary. However, we note that our choice is natural in the sense that Proposition \ref{angle-props} will continue to hold also for this extended concept of angles. 

\subsection{Normalization of Haar measure and integration formulas}\label{Haar-section}

We normalize the Haar measure $dk$ on $K$ so that 
$$\text{vol}(K)=\int_Kdk=1.$$ 
Furthermore, recalling the identification $G/K\simeq \H^n$, we normalize the Haar measure $dg$ on $G$ in such a way that the induced measure on $G/K$ corresponds to the standard (hyperbolic) measure $d\mu_{\H^n}$ on $\H^n$. In particular, for any cofinite $\Gamma\subset G$ and any (nice) fundamental domain $F_{\Gamma}$ of $\Gamma$, we find that $\mu_{\H^n}(F_{\G})=\text{vol}\left(\G\backslash G\right)$.

With these normalizations, we get the following integration formula (see e.g.\ \cite[Prop.\ 1.17 (p.\ 381)]{Helgason:1962}):

\begin{proposition}\label{KAKmeasure}
Let the Haar measures $dk$ on $K$ and $dg$ on $G$ be normalized as above. Then, for any function $f\in L^1(G)$, 
we have
\begin{align*}
\int_Gf(g)\,dg=\omega_n\int_K\int_0^{\infty}\int_K f(k_1a_tk_2)(\sinh t)^{n-1}\,dk_1dtdk_2,
\end{align*}
where $\omega_n$ denotes the $(n-1)$-dimensional volume of the unit sphere $S^{n-1}\subset\R^n$. 
\end{proposition}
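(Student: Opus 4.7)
The plan is to reduce the identity to the standard polar-coordinate expression for the hyperbolic volume on $\H^n$, exploiting the fibration $G\to G/K\simeq \H^n$. By the normalizations fixed above, the Haar measure $dg$ descends under the projection $g\mapsto ge_{n+1}$ to $d\mu_{\H^n}$, while $dk$ gives each right coset $gK$ mass one. Weil's integration formula for a Lie group modulo a compact subgroup therefore yields
\begin{equation*}
\int_G f(g)\,dg=\int_{\H^n}\left(\int_K f(g_x k)\,dk\right)d\mu_{\H^n}(x)
\end{equation*}
for any measurable section $x\mapsto g_x$ with $g_xe_{n+1}=x$.

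Next, I would parametrize $\H^n$ polarly around $e_{n+1}$ via the Cartan decomposition: every $x\in\H^n$ can be written as $x=k_1a_te_{n+1}$ with $k_1\in K$ and $t\geq 0$, uniquely up to the $M$-ambiguity $k_1\mapsto k_1m$ with $m\in M=Z_K(A)$. Taking the section $g_x:=k_1a_t$, and using that $M$ centralizes $A$ together with the $K$-invariance of $dk_2$, the inner integrand $\int_K f(k_1a_tk_2)\,dk_2$ is $M$-invariant in $k_1$ and therefore descends to a function on $K/M$. To convert the $d\mu_{\H^n}$-integral into an integral in the polar coordinates $(k_1M,t)$, I would use that the geodesic sphere of hyperbolic radius $t$ centered at $e_{n+1}$ coincides with the $K$-orbit $K\cdot a_te_{n+1}$ and has induced Riemannian $(n-1)$-volume $\omega_n(\sinh t)^{n-1}$. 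Since the push-forward of the mass-$1$ Haar $dk_1$ under $K\to K/M$ is the unique $K$-invariant probability measure, and since $K/M\simeq S^{n-1}$ is $K$-equivariant, this probability measure corresponds to $\omega_n^{-1}$ times the Euclidean surface measure on $S^{n-1}$. Assembling these pieces and using the established $M$-invariance to replace the $K/M$-integral by an integral over $K$ against $dk_1$, followed by integration against $dt$, delivers the stated formula.

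The main obstacle is nothing deep but purely bookkeeping: one has to verify that the three normalizations — the mass-$1$ Haar on $K$, the hyperbolic measure $d\mu_{\H^n}$ inherited from $dg$, and the $(n-1)$-volume $\omega_n(\sinh t)^{n-1}$ of the geodesic sphere — mesh consistently so that the overall factor of $\omega_n$ appears exactly once. All other ingredients are standard: the $\sinh t$ radial distortion on $\H^n$ is a direct computation in the hyperboloid model (or a consequence of the exponential map at $e_{n+1}$ being a radial isometry), and the $K$-orbit description of geodesic spheres follows from the transitivity of $K=\mathrm{Stab}_G(e_{n+1})$ on unit directions in $T_{e_{n+1}}\H^n$.
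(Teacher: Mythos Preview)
Your proposal is correct and follows the standard route to this integration formula. Note, however, that the paper does not actually supply a proof here: it simply cites Helgason's \emph{Differential geometry and symmetric spaces} (Prop.~1.17, p.~381) for the result. So there is no ``paper's own proof'' to compare against beyond the reference; your sketch is essentially the argument one finds in Helgason, namely reducing to polar coordinates on $G/K\simeq\H^n$ via the quotient integration formula and then using the $(\sinh t)^{n-1}$ radial Jacobian together with the identification $K/M\simeq S^{n-1}$ to recover the constant $\omega_n$.
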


We will also need the following closely related formula:

\begin{proposition}\label{MkMmeasure}
Let $n>2$. Let the Haar measure $dk$ on $K$ be normalized as above and let the Haar measure $dm$ on $M$ be normalized so that $\text{vol}(M)=\omega_n^{-1}$. Then, for any function $f\in L^1(K)$, we have
\begin{align*}
\int_Kf(k)\,dk=\omega_n\omega_{n-1}\int_M\int_0^{\pi}\int_M f\left(m_1k^{\theta}m_2\right)(\sin\theta)^{n-2}\,dm_1d\theta dm_2,
\end{align*}
where $\omega_j$ denotes the $(j-1)$-dimensional volume of the unit sphere $S^{j-1}\subset\R^j$. 
\end{proposition}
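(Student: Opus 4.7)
The plan is to reduce the integration formula on $K$ to the standard integration formula on the sphere $S^{n-1}$ via the $M$-fibration $K\to K/M\cong S^{n-1}$, then apply polar coordinates on the sphere, and finally convert the resulting angular integral on $S^{n-2}$ back into an integral over $M$ using a second fibration $M\to M/M_0\cong S^{n-2}$ with $M_0\cong\mathrm{SO}(n-2)$.

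First I would identify $K$ with $\mathrm{SO}(n)$ (acting on the first $n$ coordinates of $\R^{n+1}$) and note that $M$ is exactly the stabilizer of the vector $e_1\in\R^n$. Hence the orbit map $k\mapsto ke_1$ realizes $S^{n-1}$ as $K/M$ and the pushforward of $dk$ is a $K$-invariant probability measure on $S^{n-1}$, so it must equal $\omega_n^{-1}d\sigma$, where $d\sigma$ is the standard round measure of total volume $\omega_n$. Fubini's theorem for the principal $M$-bundle $K\to S^{n-1}$ (combined with the normalization $\mathrm{vol}(M)=\omega_n^{-1}$, which checks out since $\omega_n\cdot\omega_n^{-1}=1=\mathrm{vol}(K)$) then yields
\begin{equation*}
\int_K f(k)\,dk=\int_{S^{n-1}}\int_M f(k_\xi m_2)\,dm_2\,d\sigma(\xi),
\end{equation*}
where $k_\xi\in K$ is any representative with $k_\xi e_1=\xi$.

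Next I would choose that representative compatibly with the desired $MAM$-decomposition. Writing $\xi=(\cos\theta,\sin\theta\,\omega)$ with $\theta\in[0,\pi]$ and $\omega\in S^{n-2}\subset\R^{n-1}$, and observing that $k^{\theta}e_1=(\cos\theta,-\sin\theta,0,\ldots,0)^t$, I can take $k_\xi=m_1 k^\theta$ for any $m_1\in M$ whose $\mathrm{SO}(n-1)$-block satisfies $m_1'(-e_1')=\omega$. Combined with the standard polar integration formula
\begin{equation*}
\int_{S^{n-1}} g(\xi)\,d\sigma(\xi)=\int_0^\pi(\sin\theta)^{n-2}\int_{S^{n-2}} g(\cos\theta,\sin\theta\,\omega)\,d\sigma_{n-2}(\omega)\,d\theta,
\end{equation*}
this lets me convert the outer sphere integral into $\int_0^\pi\int_{S^{n-2}}$ weighted by $(\sin\theta)^{n-2}$.

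Finally, I would convert $\int_{S^{n-2}}$ back into $\int_M$ by a second application of the same principal-bundle Fubini argument, this time to the fibration $M\to M/M_0\cong S^{n-2}$, where $M_0\subset M$ is the $\mathrm{SO}(n-2)$-stabilizer of $-e_1'$. Applied to any function on $S^{n-2}$, this step produces the identity
\begin{equation*}
\int_{S^{n-2}} h(\omega)\,d\sigma_{n-2}(\omega)=\omega_n\omega_{n-1}\int_M h(m_1'(-e_1'))\,dm_1,
\end{equation*}
the factor $\omega_n\omega_{n-1}$ being pinned down by computing both sides with $h\equiv 1$ (using $\mathrm{vol}(M)=\omega_n^{-1}$ and $\sigma_{n-2}(S^{n-2})=\omega_{n-1}$, with the fiber mass $\mathrm{vol}(M_0)$ cancelling). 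Substituting everything back and using $m_1k^\theta e_1=(\cos\theta,\sin\theta\,m_1'(-e_1'))$, the integrand $f(k_\xi m_2)$ becomes $f(m_1 k^\theta m_2)$, and the formula of Proposition \ref{MkMmeasure} drops out. The main thing to be careful about is the bookkeeping of normalizations across the two fibrations; the role of the unknown volume $\mathrm{vol}(M_0)$ must be tracked and seen to cancel, and the representative $k_\xi$ must be chosen compatibly with the polar parametrization so that the double $M$-coset structure emerges cleanly.
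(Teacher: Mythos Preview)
Your proposal is correct and follows essentially the same approach as the paper: both use the identification $K/M\cong S^{n-1}$ together with spherical coordinates on the sphere, and pin down the constant by evaluating at $f\equiv 1$. Your version is more explicit---you spell out the two fibrations $K\to K/M\cong S^{n-1}$ and $M\to M/M_0\cong S^{n-2}$ and track the normalizations through each---whereas the paper simply asserts that $dk$ must be a positive multiple of $(\sin\theta)^{n-2}\,dm_1\,d\theta\,dm_2$ and then computes the multiple from $\int_K dk=1$.
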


\begin{proof}
To begin, we recall that $K/M\simeq S^{n-1}$. Furthermore, using the explicit form of the volume element of the $(n-1)$-sphere in terms of spherical coordinates, we easily find a relation of measures which in integrated form becomes 
\begin{equation*}
\omega_n=\omega_{n-1}\int_0^{\pi}(\sin\theta)^{n-2}\,d\theta.
\end{equation*}
We conclude that 
 $dk$ can be written as a positive multiple of
 $(\sin\theta)^{n-2}\,dm_1d\theta dm_2$. Finally, integrating the
 constant function $f(k)\equiv1$, we find that  the positive multiple
 equals $\omega_n\omega_{n-1}$.
\end{proof}

We note that if $g=k_g a_{t(g)}k_g'$, then $\norm{g}^2=2\cosh t(g)$,
and  with the above normalizations
\begin{equation}\label{BQvolume}
\text{vol}(B_Q)\sim \frac{\omega_n}{2^{n-1}(n-1)}Q^{2(n-1)}\qquad\text{as }Q\to\infty.
\end{equation} 
Indeed, this fact follows from Proposition \ref{KAKmeasure}, since we readily get
\begin{multline*}
\text{vol}(B_Q)=\omega_n\int_0^{\cosh^{-1}(Q^2/2)}(\sinh t)^{n-1}\,dt\\
\sim\frac{\omega_n}{2^{n-1}}\int_0^{2\log Q}e^{(n-1)t}\,dt\sim\frac{\omega_n}{2^{n-1}(n-1)}Q^{2(n-1)}
\end{multline*} 
as $Q\to\infty$.

\begin{remark}\label{LPremark}
In relation to the asymptotic formula \eqref{BQvolume}, let us recall
that Lax and Phillips \cite[Thm.\ 1]{LaxPhillips:1982a} (see also
\cite{Levitan:1987a, ElstrodtGrunewaldMennicke:1988a}) proved that, for cofinite $\Gamma\subset G$, we have 
\begin{align*}
\#N_\G(Q)\sim\frac{\text{vol}(B_Q)}{\text{vol}\left(\G\backslash G\right)}\sim \frac{\omega_n}{2^{n-1}(n-1)\text{vol}\left(\G\backslash G\right)}Q^{2(n-1)}
\end{align*}
as $Q\to\infty$ (cf.\ \eqref{LP-intro}).
\end{remark}

\subsection{The pair correlation counting function}\label{heuristics} 

We now give a brief discussion of the normalized counting function
\begin{equation}\label{pair-correlation-function}
R_{2,Q}(\xi)=\frac1{\#N_\G(Q)}\#\left\{\g,\g'\in N_{\G}(Q) : \gamma^{-1}\gamma'\notin K, \v{\g}{\g'}< \frac{2k_{n,\G}}{Q^{2}}\xi\right\}.
\end{equation}
Our purpose is to give a short heuristic determination of the value of
$k_{n,\G}$ that -- if equidistribution were uniform in all parameters
-- would make $R_{2,Q}(\xi)$ tend to $\xi^{n-1}$ as $Q\to\infty$. 

We recall (see, e.g., \cite[Thm.\ 2]{Nicholls:1983a}) that angles in hyperbolic lattices are
equidistributed in the sense that, for every fixed $g\in G$ and angle $\theta\in[0,\pi]$, we have 
\begin{equation*}\label{equidistribution}
\frac{\#\{\g\in N_{\G}(Q) : \v{\g}{g}< \theta \}}{\#N_\G(Q)}\to\frac{\vol{S_\theta^{n-1}}}{\vol{S^{n-1}}},\quad \textrm{ as }Q\to\infty.\end{equation*}
Here $S_\theta^{n-1}:=\{x\in S^{n-1}\subseteq \R^n: x\cdot e_1> \cos
\theta\}$ is a spherical cap of opening angle
$\theta$. Hence, using Remark \ref{LPremark} and spherical coordinates, we get
\begin{align*}
\#\{\g\in N_{\G}(Q)\setminus\{g\} : \v{\g}{g}< \theta \}&\sim\#\{\g\in N_{\G}(Q) : \v{\g}{g}< \theta \}\\
&\sim\frac{\vol{S_\theta^{n-1}}}{\vol{S^{n-1}}}\#N_\G(Q)\\
&\sim\frac{V_{n-1}}{\text{vol}\left(\G\backslash G\right)}\int_0^{\theta}(\sin t)^{n-2}\,dt\left(\frac{Q^2}{2}\right)^{n-1}
\end{align*}
as $Q\to\infty$, where $V_n=\omega_n/n$ denotes the volume of the unit ball in $\R^n$. Since we are interested in small values of $\theta$ (in fact, in our case, $\theta\to0$ as $Q\to\infty$), we furthermore note that 
\begin{align*}
\int_0^{\theta}(\sin t)^{n-2}\,dt\sim\frac{\theta^{n-1}}{n-1}
\end{align*}
as $\theta\to 0$.
Thus, combining these two asymptotic formulas, we expect 
\begin{align}\label{BQTHETAASYMP}
\#\{\g\in N_{\G}(Q)\setminus\{g\} : \v{\g}{g}< \theta \}\approx\frac{V_{n-1}}{(n-1)\text{vol}\left(\G\backslash G\right)}\left(\frac{Q^2\theta}{2}\right)^{n-1}
\end{align}
as $Q\to\infty$ and $\theta\to 0$.

Recall that we are interested in understanding the function
$R_{2,Q}(\xi)$ in \eqref{pair-correlation-function}. In particular, we
are interested in finding the value of $\theta$ such that the right hand
side in \eqref{BQTHETAASYMP}, on average over $g=\gamma'\in N_\G(Q)$
and as $Q$ tends to infinity, is expected to be close to $\xi^{n-1}$. Thus, we solve the equation
\begin{align*}
\frac{V_{n-1}}{(n-1)\text{vol}\left(\G\backslash G\right)}\left(\frac{Q^2\theta}{2}\right)^{n-1}=\xi^{n-1},
\end{align*}
and we immediately find the solution
\begin{align*}
\theta=\frac{2\xi}{Q^2}\left(\frac{(n-1)\text{vol}\left(\G\backslash G\right)}{V_{n-1}}\right)^{\frac1{n-1}}.
\end{align*}
Based on the above discussion, we define
\begin{equation}\label{kn-constant}
k_{n,\G}:=\left(\frac{(n-1)\text{vol}\left(\G\backslash G\right)}{V_{n-1}}\right)^{\frac1{n-1}};
\end{equation}
this is the constant that we will use throughout to normalize the
angles in the counting function $R_{2,Q}(\xi)$ (see \eqref{pair-correlation-function}).

\subsection{Decay of matrix coefficients}

A basic ingredient in our argument is the decay properties of matrix coefficients of the
right regular representation of $G$ on the space  $L^2(\Gamma\backslash
G)=L^2(\Gamma\backslash G,dg)$. 
That is, for $\Phi_1,\Phi_2\in L^2(\Gamma\backslash G)$, we are interested in the properties of 
\begin{equation*}
  g\mapsto \inprod{\pi(g)\Phi_1}{\Phi_2}_{\G\backslash G},
\end{equation*}
where $\inprod{f_1}{f_2}_{\G\backslash G}=\int_{\G\backslash G}f_1(g)\overline{f_2(g)}\,dg$ and $(\pi(g)f)(h)=f(hg)$. We refer to
\cite{Oh:2014} and the references therein for recent developments on
various counting problems using 
precise information on the decay of matrix coefficients. 

Since we assume $\Gamma$ to be a lattice in $G$, we have $L^2(\Gamma\backslash
G)=\C\oplus L_0^2(\Gamma\backslash G)$. It is well-known that there exists $s_0$ in $((n-1)/2,n-1)$ such that
$L_0^2(\G\backslash G)$ does not contain any complementary
series representation with parameter $s\geq s_0$. Recall that this is equivalent to the statement that the non-trivial
spectrum of the automorphic Laplacian is contained in the interval $(s_0(n-1-s_0),\infty)$, i.e.\ to a
spectral gap. Furthermore, for $\Phi_1,\Phi_2\in L_0^2(\Gamma\backslash G)$ smooth and $K$-finite, 
we have 
\begin{equation*}
  \big|\inprod{\pi(g)\Phi_1}{\Phi_2}_{\G\backslash G}\big|\leq
  Ce^{(s_0-n+1)t(g)}\prod_{i=1,2}(\dim(\pi(K)\Phi_i))^{1/2}\|\Phi_i\|_{L^2}
\end{equation*}
for some positive constant $C$ depending only on $s_0$ (see e.g.\ \cite[Eq.\ (5.4)]{KontorovichOh:2011} for the case $n=3$; the argument given there
readily extends to general $n\geq2$). Using Fourier decomposition on the compact group $K$, it is possible to
remove the assumption of $K$-finiteness at the expense of a Sobolev norm. For $\Phi\in C^\infty(\G\backslash G)$ and $l\in\N$, we define the $l$th Sobolev norm of $\Phi$ by
\begin{equation}\label{Sobolevdefinition}
\mathcal{S}_l(\Phi):=\sum\|X(\Phi)\|_{L^2},
\end{equation}
where the sum is taken over all monomials of degree at most $l$ in
some fixed basis for the Lie algebra of $G$. The above argument leads
to the following theorem (see e.g.\ \cite[Thm. 3.1]{Oh:2014} and the
references therein):

\begin{theorem} \label{main-bound} 
There exist $\frac{n-1}{2}<s_0< (n-1)$ and $l\in \N$ such that for all $\Phi_1,\Phi_2\in L^2(\G\backslash G)\cap C^\infty(\G\backslash G)$, we have 
\begin{equation}\label{matrixcoefficientsestimatewithsobolevnorms}
\langle\pi(g)\Phi_1,\Phi_2\rangle_{\G\backslash G}=\frac{\langle\Phi_1,1\rangle_{\G\backslash G}\langle1,\Phi_2\rangle_{\G\backslash G}}{\vol{\G \backslash G}}+O\left(e^{(s_0-n+1)t(g)}\mathcal{S}_l(\Phi_1)\mathcal{S}_l(\Phi_2)\right).
\end{equation}
\end{theorem}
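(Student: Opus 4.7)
My plan is to reduce to the $K$-finite bound recalled before the theorem statement and then remove the $K$-finiteness hypothesis by Fourier expansion along the compact group $K=\mathrm{SO}(n)$, as in the classical argument of Cowling-Haagerup-Howe.

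First, decompose $\Phi_i=c_i+\Phi_i^0$, where $c_i=\inprod{\Phi_i}{1}_{\G\backslash G}/\vol{\G\backslash G}$ is the orthogonal projection onto the constants and $\Phi_i^0\in L_0^2(\G\backslash G)\cap C^\infty(\G\backslash G)$. Expanding $\inprod{\pi(g)\Phi_1}{\Phi_2}_{\G\backslash G}$ bilinearly, the constant-constant piece produces exactly the main term on the right-hand side of \eqref{matrixcoefficientsestimatewithsobolevnorms}, while the two cross terms vanish because the decomposition $L^2=\C\oplus L_0^2$ is $G$-invariant and $\Phi_i^0\perp 1$. Thus it suffices to estimate $\bigl|\inprod{\pi(g)\Phi_1^0}{\Phi_2^0}_{\G\backslash G}\bigr|$ by the stated error term.

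Second, apply the Peter-Weyl decomposition on $K$: write $\Phi_i^0=\sum_{\tau\in\widehat{K}}\Phi_{i,\tau}$ with $\Phi_{i,\tau}$ the projection onto the $\tau$-isotypic subspace. Each $\Phi_{i,\tau}$ is smooth, $K$-finite, and satisfies $\dim(\pi(K)\Phi_{i,\tau})\leq\dim(\tau)^2$. The recalled $K$-finite bound then gives
\begin{equation*}
\bigl|\inprod{\pi(g)\Phi_{1,\tau}}{\Phi_{2,\sigma}}_{\G\backslash G}\bigr|\leq Ce^{(s_0-n+1)t(g)}\dim(\tau)\dim(\sigma)\norm{\Phi_{1,\tau}}_{L^2}\norm{\Phi_{2,\sigma}}_{L^2},
\end{equation*}
so after summing over $\tau,\sigma$ the problem reduces to bounding $\sum_{\tau}\dim(\tau)\norm{\Phi_{i,\tau}}_{L^2}$ by a Sobolev norm $\mathcal S_l(\Phi_i)$.

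Third, let $\Omega_K$ denote the Casimir of $\mathrm{Lie}(K)$, viewed as a left-invariant differential operator on $\G\backslash G$; it acts on the $\tau$-isotypic component by a scalar $\lambda_\tau\asymp(1+\norm{\tau})^2$, while Weyl's dimension formula gives $\dim(\tau)\ll(1+\norm{\tau})^{d}$ for some $d=d(K)$. A Cauchy-Schwarz argument then yields
\begin{equation*}
\sum_\tau\dim(\tau)\norm{\Phi_{i,\tau}}_{L^2}\leq\Bigl(\sum_\tau\dim(\tau)^2(1+\lambda_\tau)^{-l}\Bigr)^{1/2}\norm{(1+\Omega_K)^{l/2}\Phi_i}_{L^2},
\end{equation*}
and for any integer $l$ exceeding a threshold depending only on $n$ the first factor is finite, while the second factor is controlled by $\mathcal S_l(\Phi_i)$ since $\Omega_K$ has order $2$ in the fixed basis of $\mathrm{Lie}(G)$. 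The main obstacle is the convergence of the series $\sum_\tau\dim(\tau)^2(1+\lambda_\tau)^{-l}$, which amounts to Sobolev embedding on the compact group $K=\mathrm{SO}(n)$ via Weyl's law; once this is established, the rest is simply a careful repackaging of the $K$-finite decay estimate.
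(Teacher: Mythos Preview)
Your proposal is correct and follows exactly the approach the paper sketches: the paper does not give a self-contained proof but simply states that ``using Fourier decomposition on the compact group $K$, it is possible to remove the assumption of $K$-finiteness at the expense of a Sobolev norm'' and refers to \cite{Oh:2014}. Your three steps---splitting off the constant, applying the $K$-finite bound to Peter--Weyl isotypic pieces, and summing via Cauchy--Schwarz against powers of the $K$-Casimir---are precisely the standard Cowling--Haagerup--Howe argument the paper has in mind, so there is nothing to add beyond noting that one should take $l$ even (so that $(1+\Omega_K)^{l/2}$ is a genuine differential operator of order $l$) and large enough that $\sum_{\tau\in\widehat K}\dim(\tau)^2(1+\lambda_\tau)^{-l}<\infty$, which for $K=\mathrm{SO}(n)$ amounts to $l$ exceeding an explicit constant depending only on $n$.
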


\begin{remark}\label{Kfiniteremark}
We note that the argument removing the $K$-finiteness above is done independently for the two functions $\Phi_1$ and $\Phi_2$. In particular, this implies that in the case where $\Phi_i$ ($i=1$ or $2$) is $K$-invariant, we can replace the Sobolev norm $\mathcal{S}_l(\Phi_i)$ in \eqref{matrixcoefficientsestimatewithsobolevnorms} by the corresponding $L^2$-norm $\|\Phi_i\|_{L^2}$. In addition we mention that, in the case $n=2$ where it is possible to take $l=1$, Venkatesh \cite[Sect.\ 9.1.2]{Venkatesh:2010} has given an interpolation argument that replaces the Sobolev norm $\mathcal{S}_1(\Phi_i)$ in \eqref{matrixcoefficientsestimatewithsobolevnorms} by $ \mathcal{S}_1(\Phi_i)^{1/2+\epsilon}\|\Phi_i\|_{L^2}^{1/2-\epsilon}$ for any $0<\epsilon<\frac12$.
\end{remark}

\begin{remark}
The implied constant in \eqref{matrixcoefficientsestimatewithsobolevnorms} depends on $s_0$. However, since we will work with a fixed admissible $s_0$, we choose not to indicate this dependence. 
\end{remark}

\section{Stability }\label{stability}
We will need some information about how $\norm{g}$ and $\theta(g)$ change
when we multiply from the right with $M\in G$. This is adressed in the
following proposition:

\begin{proposition}\label{change-under-right-mult}
  Let $g,M\in G$. Then 
  \begin{equation}\label{norm-change}
\norm{gM}^2=2(\cosh t(g)\cosh t(M)+\cos v\sinh t(g)\sinh t(M)).
\end{equation}
Assume further that $t(g)>t(M)$. Then  $\v{gM}{g}<\pi/2$ and
\begin{equation}\label{angle-change}
\tan \v{gM}{g}=\frac{\sin v\sinh t(M) }{\cosh t(M)\sinh t(g)+\cos
  v\cosh t(g)\sinh t(M)}.
\end{equation}
In both cases $v=\pi -\v{g^{-1}}{M}$.
\end{proposition}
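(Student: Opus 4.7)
The plan is to compute everything from the Cartan decomposition $g=k_g a_{t(g)}k_g'$ and $M=k_M a_{t(M)}k_M'$ and then read off the two quantities of interest from the coordinates of $gMe_{n+1}\in\H^n\subset\R^{n+1}$. Since $k_M'e_{n+1}=e_{n+1}$, we have
\begin{equation*}
gMe_{n+1}=k_g\, a_{t(g)}\, k\, a_{t(M)}e_{n+1},\qquad k:=k_g'k_M\in K.
\end{equation*}
First I would apply $a_{t(M)}$ to $e_{n+1}$, obtaining $\sinh t(M)\,e_1+\cosh t(M)\,e_{n+1}$. Because $k$ fixes $e_{n+1}$ and rotates the first $n$ coordinates, the intermediate vector becomes $\sinh t(M)\,w+\cosh t(M)\,e_{n+1}$ where $w:=ke_1$ lies in $\R^n\times\{0\}$ and has Euclidean norm $1$. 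Writing $w=(w_1,\dots,w_n,0)$ and applying $a_{t(g)}$ componentwise gives an explicit vector in $\R^{n+1}$ whose last coordinate is
\begin{equation*}
(gMe_{n+1})_{n+1}=\cosh t(g)\cosh t(M)+w_1\sinh t(g)\sinh t(M),
\end{equation*}
since $k_g$ preserves the $(n+1)$-coordinate. By definition $\|gM\|^2=-2\langle gMe_{n+1},e_{n+1}\rangle=2(gMe_{n+1})_{n+1}$, which yields \eqref{norm-change} once I identify $w_1$ with $\cos v$.

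To identify $w_1$, I would compute $\overline{g^{-1}e_{n+1}}$ and $\overline{Me_{n+1}}$ (the $\R^n$-projections of these points) from the Cartan decompositions of $g^{-1}$ and $M$, obtaining $-\sinh t(g)\,(k_g')^{-1}e_1$ and $\sinh t(M)\,k_Me_1$ respectively. The definition of the angle then gives
\begin{equation*}
\cos v(g^{-1},M)=-\langle (k_g')^{-1}e_1,k_Me_1\rangle=-\langle e_1,ke_1\rangle=-w_1,
\end{equation*}
so $w_1=\cos(\pi-v(g^{-1},M))=\cos v$, as required.

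For the angle formula, I would project $gMe_{n+1}$ and $ge_{n+1}$ to $\R^n$ and use that $k_g$ preserves the Euclidean inner product on the first $n$ coordinates. The projection $\overline{ge_{n+1}}=\sinh t(g)\,k_ge_1$ has norm $\sinh t(g)$, while the projection $u$ of $gMe_{n+1}$ satisfies $\langle u,k_ge_1\rangle=u_1=\sinh t(M)\cos v\cosh t(g)+\cosh t(M)\sinh t(g)$ and
\begin{equation*}
|u|^2=u_1^2+\sinh^2 t(M)(1-w_1^2)=u_1^2+\sinh^2 t(M)\sin^2 v.
\end{equation*}
Hence $\cos v(gM,g)=u_1/|u|$ and $\sin v(gM,g)=\sinh t(M)\sin v/|u|$, from which \eqref{angle-change} follows by division.

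The final claim $v(gM,g)<\pi/2$ under $t(g)>t(M)$ amounts to $u_1>0$. The minimum of $u_1$ over $v\in[0,\pi]$ occurs at $\cos v=-1$, giving $u_1=\sinh(t(g)-t(M))>0$. The only mildly delicate point is the identification $w_1=\cos v$ via the angle between $g^{-1}e_{n+1}$ and $Me_{n+1}$; everything else is a bookkeeping computation with the $2\times 2$ blocks of $a_t$ and the rotational invariance of $K$.
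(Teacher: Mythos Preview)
Your proof is correct and follows essentially the same approach as the paper: both compute $gMe_{n+1}$ via the Cartan decompositions and read off the last coordinate for $\|gM\|^2$ and the first $n$ coordinates for the angle, identifying the $(1,1)$-entry of $k=k_g'k_M$ with $\cos v$. The only cosmetic difference is that the paper invokes Proposition~\ref{angle-props} to reduce the angle identifications to angles with $g_N$, whereas you compute the projections $\overline{g^{-1}e_{n+1}}$ and $\overline{Me_{n+1}}$ directly from the definition; the underlying linear algebra is identical.
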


\begin{proof}
For $g\in K$ or $M\in K$ everything is clear. Assume this not to be
  the case. Using the Cartan decompositions of $g$ and $M$, and that $d(x,y)$
  is a point-pair invariant, we see that
  \begin{equation*}
    \norm{gM}^2=2\cosh d(a_{t(g)}k_g'k_Ma_{t(M)}e_{n+1},e_{n+1}).
  \end{equation*}
 It follows from the definition of $d$ that
 $\cosh d(a_{t(g)}k_g'k_Ma_{t(M)}e_{n+1},e_{n+1})$ equals the lower entry of
 $a_{t(g)}k_g'k_Ma_{t(M)}e_{n+1}$, from which \eqref{norm-change}
 follows easily by inspection if $\cos v$ equals the
 upper left entry of  $k_g'k_M$. However,  the upper left entry of $k_g'k_M$ equals
 $\cos(\v{k_g'k_Ma_t}{g_N})$ for any $t>0$, and by Proposition
 \ref{angle-props} we see that 
 \begin{align*}
   \v{k_g'k_Ma_{t(M)}}{g_N}=&\v{k_g'k_Ma_{t(M)}}{a_{t(g)}}\\=&\pi-\v{k_Ma_{t(M)}}{k_g'^{-1}a_{-t(g)}}=\pi-\v{M}{g^{-1}}.
 \end{align*}

To prove \eqref{angle-change}, we note that
\begin{equation*}
\v{gM}{g}=\v{a_{t(g)}k_g'k_Ma_{t(M)}}{a_{t(g)}}=\v{a_{t(g)}k_g'k_Ma_{t(M)}}{g_N}.
\end{equation*}
Using that the upper left entry of $k_g'k_M$ equals $\cos
v$,  and that, by orthogonality, the sum
of the squares of the elements in the rest of the first column in $k_g'k_M$ equals
$\sin^2v$,  a direct computation from the definition of $v$ shows that 
\begin{align}\label{willi}
 \nonumber \cos&\,\v{gM}{g}=\cos \v{a_{t(g)}k_g'k_Ma_{t(M)}}{g_N}\\ 
&=\frac{\sinh t(g)\cosh t(M) +\cos v \cosh t(g) \sinh
  t(M)}{\sqrt{(\sinh t(g)\cosh t(M)+\cos v \cosh t(g) \sinh t(M))^2+\sin^2 v\sinh^2t(M)}}.
\end{align}
We note that $t(g)>t(M)$ implies
that the numerator of \eqref{willi} is positive, so the angle
$\v{gM}{g}$ is at most $\pi/2$. From this the result follows easily.
\end{proof}

For any (small) $\delta>0$, we define
\begin{equation*}
  A_\delta:=\{a_t: \abs{t}\leq \delta\}.
\end{equation*}
Then clearly 
\begin{equation}\label{Bdelta}
B_{\delta_1}=KA_\delta K,
\end{equation}
where $\delta_1^2=2\cosh \delta$.

\begin{lemma}\label{first-bounds}
  Let $g\in G$ with $\norm{g}>3$, and let $h\in B_{\delta_1}$. Then,
  for $\delta>0$ sufficiently small, we have 
  \begin{enumerate}[(i)]
  \item \label{bound-one} $\norm{gh}=\norm{g}(1+O(\delta))$,
  \item \label{bound-two}$\v{g}{gh}=O\left(\frac{\delta}{\norm{g}^2}\right)$.
  \end{enumerate}
\end{lemma}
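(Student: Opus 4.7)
The plan is to apply Proposition \ref{change-under-right-mult} with $M=h$, so that we only need to track how $t(h)$ and the auxiliary angle $v=\pi-\v{g^{-1}}{h}$ enter the formulas \eqref{norm-change} and \eqref{angle-change}. From \eqref{Bdelta}, the assumption $h\in B_{\delta_1}$ gives $t(h)\leq \delta$, hence $\cosh t(h)=1+O(\delta^2)$ and $\sinh t(h)=O(\delta)$. Also, $\|g\|>3$ translates into $\cosh t(g)>9/2$, so $t(g)$ is bounded away from $0$, and for $\delta$ sufficiently small we automatically have $t(g)>t(h)$, which puts us in the regime where \eqref{angle-change} applies.

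For part \eqref{bound-one}, I would plug these estimates into \eqref{norm-change}, using the trivial bound $\sinh t(g)\leq \cosh t(g)$ together with $\|g\|^2=2\cosh t(g)$, to obtain
\begin{equation*}
\norm{gh}^2=2\cosh t(g)\bigl(1+O(\delta)\bigr)=\norm{g}^2\bigl(1+O(\delta)\bigr),
\end{equation*}
and then take square roots.

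For part \eqref{bound-two}, the numerator of \eqref{angle-change} is bounded by $\sinh t(h)=O(\delta)$. The content lies in bounding the denominator from below. The natural identity
\begin{equation*}
\cosh t(h)\sinh t(g)+\cos v\,\cosh t(g)\sinh t(h)\geq \sinh t(h)\sinh t(g)-\cosh t(g)\sinh t(h)= \sinh(t(g)-t(h))
\end{equation*}
(in the worst case $\cos v=-1$) gives a lower bound, and since $t(g)-t(h)\geq t(g)-\delta$ stays bounded away from $0$ and has $\sinh(t(g)-\delta)=\tfrac12 e^{t(g)}(1+o(1))\asymp \|g\|^2$ for $\delta$ small, we conclude that the denominator is $\gtrsim \|g\|^2$. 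Consequently $\tan \v{g}{gh}=O(\delta/\|g\|^2)$, and since this tangent is small (recall that Proposition \ref{change-under-right-mult} asserts $\v{g}{gh}<\pi/2$), the angle itself satisfies $\v{g}{gh}=O(\delta/\|g\|^2)$.

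The only point requiring care is the denominator lower bound in part \eqref{bound-two}, since $\cos v$ may be negative and could in principle produce cancellation; this is handled by the elementary identity above, which exploits $t(g)>t(h)$ to ensure positivity uniformly. Everything else is a direct substitution into the formulas of Proposition \ref{change-under-right-mult} with routine Taylor expansions of $\cosh$ and $\sinh$ near $0$.
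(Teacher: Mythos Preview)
Your approach is essentially the same as the paper's: both arguments plug $M=h$ into Proposition \ref{change-under-right-mult} and bound numerator and denominator of \eqref{angle-change} separately, the only cosmetic difference being that the paper factors the denominator as $\cosh t(g)\bigl(\tanh t(g)\cosh t(h)+\cos v\sinh t(h)\bigr)$ rather than invoking the addition formula for $\sinh$.

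There is, however, a typo in your displayed chain: the middle expression should read $\cosh t(h)\sinh t(g)-\cosh t(g)\sinh t(h)$, not $\sinh t(h)\sinh t(g)-\cosh t(g)\sinh t(h)$. As written, the asserted equality with $\sinh(t(g)-t(h))$ is false (the middle expression is actually negative). Your parenthetical ``in the worst case $\cos v=-1$'' and the final identity make clear what you intended; once corrected, the argument goes through.
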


\begin{proof}
 From Proposition \ref{change-under-right-mult},
 we see that 
 \begin{equation*}
\norm{gh}^2=2\cosh(t(g))(1+O(\delta^2))+O(\delta\norm{g}^2)=\norm{g}^2(1+O(\delta)),
\end{equation*}
which implies \eqref{bound-one}.

Similarly, from Proposition \ref{change-under-right-mult}, we see that for $\delta$ sufficiently small
\begin{align*}
  \abs{\tan \v{g}{gh}}
=O\left(\frac{\delta}{\norm{g}^2(\tanh t(g)\cosh t(h)-\sinh
  t(h))}\right)=O\left(\frac{\delta}{\norm{g}^2}\right),
\end{align*}
which implies \eqref{bound-two} since $\abs{v}\leq \abs{\tan v}$ for
$\abs{v}\leq \pi/2.$
\end{proof}

\begin{remark}\label{inversions-too}
  By using that $t(g^{-1})=t(g)$,  Lemma \ref{first-bounds}
  implies $\norm{hg}=\norm{g}(1+O(\delta))$ and
  $\v{g^{-1}}{g^{-1}h^{-1}}=O\left(\frac{\delta}{\norm{g}^2}\right)$. We note
    also that combining the above angle bounds with Proposition
    \ref{angle-props}, we obtain
    \begin{equation*}
      \abs{\theta(g)-\theta(gh)}=O\left(\frac{\delta}{\norm{g}^2}\right),
        \quad \abs{\varphi(g)-\varphi(hg)}=O\left(\frac{\delta}{\norm{g}^2}\right)
    \end{equation*}
for $\norm{g}>3$ and $h\in B_{\delta_1}$.
\end{remark}

We now define
\begin{equation}
 \label{Ddelta} D_\delta:=K_\delta A_\delta K_\delta,
\end{equation}
where $K_\delta$ is defined by
\begin{equation}\label{Kdelta}
K_\delta:=\{k\in K:\abs{ka-a} <\delta, \textrm{ for
  all }a\in S^{n}\subseteq \R^{n+1}\}.
\end{equation}
Note that the elements of $K_\delta$ rotate any given direction in
$\R^{n+1}$ by at most a small amount.

\begin{lemma}\label{fattening-stability}
 For $\delta>0$ sufficiently small the following holds: Let $g\in G$, with $\norm{g}^2>3$ , and let $g_1=h_1gh_2\in
  B_{\delta_1}gD_\delta$. Then 
  \begin{enumerate}[(i)]
\item\label{t-growth}    $t(g_1)-t(g)=O(\delta)$,
\item\label{v-growth} $\v{g_1^{-1}}{g^{-1}}=O(\delta).$
  \end{enumerate}
\end{lemma}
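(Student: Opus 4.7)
The plan is to establish (i) via a norm-stability argument and (ii) via an iterated triangle-inequality argument, combining the right-multiplication bounds from Lemma~\ref{first-bounds} and Remark~\ref{inversions-too} with a direct analysis of how the small left factor $h_2^{-1}\in D_\delta$ displaces a point of $\H^n$ in angle.

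For (i), note that $D_\delta\subseteq B_{\delta_1}$ by \eqref{Bdelta} and \eqref{Ddelta}, so both $h_1$ and $h_2$ lie in $B_{\delta_1}$.  Applying Lemma~\ref{first-bounds}(i) to the right-multiplication $g\mapsto gh_2$ and then invoking Remark~\ref{inversions-too} for the left-multiplication by $h_1$, I obtain $\norm{g_1}=\norm{g}(1+O(\delta))$.  Since $\norm{\cdot}^2=2\cosh t(\cdot)$ and $t(g)$ is bounded below (by the hypothesis $\norm{g}^2>3$), inverting this relation yields $t(g_1)-t(g)=O(\delta)$.

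For (ii), Proposition~\ref{angle-props}(vi) gives
\begin{equation*}
\v{g_1^{-1}}{g^{-1}} \leq \v{g_1^{-1}}{g^{-1}h_1^{-1}} + \v{g^{-1}h_1^{-1}}{g^{-1}}.
\end{equation*}
The second summand is $O(\delta/\norm{g}^2)=O(\delta)$ by Lemma~\ref{first-bounds}(ii) and Remark~\ref{inversions-too} applied to $g^{-1}$ and $h_1^{-1}\in B_{\delta_1}$.  For the first summand I set $y:=g^{-1}h_1^{-1}$, so that $g_1^{-1}=h_2^{-1}y$ and the task reduces to showing $\v{h_2^{-1}y}{y}=O(\delta)$.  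Since both $K_\delta$ (by the symmetric definition \eqref{Kdelta}) and $A_\delta$ are inversion-symmetric, so is $D_\delta$, and one may write $h_2^{-1}=ka_\tau k'$ with $k,k'\in K_\delta$ and $|\tau|\leq\delta$.  A further triangle inequality then gives
\begin{equation*}
\v{h_2^{-1}y}{y} \leq \v{k'y}{y} + \v{a_\tau k'y}{k'y} + \v{ka_\tau k'y}{a_\tau k'y}.
\end{equation*}
The two $K_\delta$-rotation steps are immediate: the defining condition \eqref{Kdelta} forces the $S^{n-1}$-direction from $e_{n+1}$ of the relevant point (in the hyperboloid model) to rotate by Euclidean length at most $\delta$, hence by an angle $O(\delta)$.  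For the $A_\delta$-translation step, Lemma~\ref{first-bounds}(i) applied to $y$ ensures that $t(k'y)=t(y)$ remains bounded below, and an explicit computation of the hyperboloid coordinates of $a_\tau(k'y)e_{n+1}$ shows that its $S^{n-1}$-direction from $e_{n+1}$ differs from that of $(k'y)e_{n+1}$ by $O(|\tau|)=O(\delta)$.

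The main obstacle is precisely this translation estimate: although the hyperbolic displacement $d(a_\tau z,z)$ grows with the distance of $z$ from the translation axis (and in particular is not uniformly bounded by $|\tau|$), the angular displacement at $e_{n+1}$ remains $O(|\tau|)$.  Verifying this uniformly in $t(z)$ (given the lower bound) is the delicate point; it follows from a direct manipulation in the hyperboloid model showing that the $\omega\cdot\omega'$ inner product between the direction of $z e_{n+1}$ and that of $a_\tau z e_{n+1}$ equals $1+O(\tau^2)$ uniformly for $t(z)$ bounded below, or equivalently from the hyperbolic law of sines together with the fact that the two triangle-sides emanating from $e_{n+1}$ grow at essentially the same rate as the side opposite $e_{n+1}$.
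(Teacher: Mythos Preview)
Your proof is correct. Part~(i) coincides with the paper's argument (the paper makes the inversion of $\cosh$ explicit via the mean value theorem and the bound $r\le 2\sqrt{r^2-1}$, but this is exactly what your ``inverting this relation'' amounts to). For part~(ii) both you and the paper begin with a triangle-inequality split separating the right-multiplication by $h_1^{-1}$ (handled by Lemma~\ref{first-bounds} and Remark~\ref{inversions-too}) from the left-multiplication by $h_2^{-1}$, reducing to a bound of the form $\v{h_2^{-1}z}{z}=O(\delta)$ for some $z$ with $t(z)$ bounded below. The treatments of this remaining term differ genuinely. The paper proves the general claim that $\v{hg}{g}=O(\delta)$ whenever $\max_{i,j}\abs{(h-I)_{ij}}\le C\delta$, by writing $g=k_ga_tk_g'$, reducing to $\v{(k_g^{-1}hk_g)a_t}{a_t}$, and directly estimating the Euclidean distance between the normalized $\R^n$-projections in one stroke. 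You instead exploit the product structure $D_\delta=K_\delta A_\delta K_\delta$: a further threefold triangle inequality reduces to two $K_\delta$-rotation steps (immediate from \eqref{Kdelta}) and one $A_\delta$-translation step, for which your claim $\cos\v{a_\tau z}{z}=1+O(\tau^2)$ uniformly in $t(z)$ bounded below is indeed correct (a short hyperboloid computation gives $1-\cos\v{a_\tau z}{z}=(S-p_1^2)((\cosh\tau-1)p_1+\sinh\tau\, p_{n+1})^2/(S\cdot(\text{denom}))$ with $S=\sinh^2 t(z)$). The paper's route is more economical and yields a reusable matrix-entry criterion \eqref{stronger-statement}; your route is more geometrically transparent, treating rotations and translations separately, at the cost of an extra triangle-inequality layer and the need to re-check the lower bound on $t$ at the intermediate step.
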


\begin{proof}
To prove \eqref{t-growth}, we use Lemma \ref{first-bounds}, Remark \ref{inversions-too} and the mean value theorem.  For $\delta>0$ sufficiently small, $\norm{g_1}^2/2$ and $\norm{g}^2/2$ are
larger than say $5/4$. Also, $r\leq 2\sqrt{r^2-1}$ for $r>5/4$. Hence, 
for some $r$ between $\norm{g_1}^2/2$ and $\norm{g}^2/2$, we obtain
\begin{align*}
  \abs{t(g_1)-t(g)}=&\abs{\cosh^{-1}\left(\norm{g_1}^2/2\right)-\cosh^{-1}\left(\norm{g}^2/2\right)}\\
=&\frac{1}{\sqrt{r^2-1}}\abs{\norm{g_1}^2/2-\norm{g}^2/2}
=O\bigg(\frac{\delta\norm{g}^2}{r}\bigg)
=O(\delta).
\end{align*}

To prove \eqref{v-growth}, we observe that by Lemma \ref{first-bounds}, Remark \ref{inversions-too}
and Proposition \ref{angle-props} we have, since $h_1^{-1},h_2^{-1}\in B_{\delta_1}$, that 
\begin{align*}
0\leq  &\v{g_1^{-1}}{g^{-1}}
\leq\v{h_2^{-1}g^{-1}h_1^{-1}}{h_2^{-1}g^{-1}}+\v{h_2^{-1}g^{-1}}{g^{-1}}\\
=&O\bigg(\frac{\delta}{\norm{h_2^{-1}g^{-1}}^2}\bigg)+\v{h_2^{-1}g^{-1}}{g^{-1}}
=O(\delta)+\v{h_2^{-1}g^{-1}}{g^{-1}}.
\end{align*}

Let $C>0$ be a fixed constant. We claim that, for any $h\in G$ satisfying
\begin{equation}\label{another-condition}
\max_{i,j}\abs{(h-I)_{ij}}\leq
  C\delta,
\end{equation} 
we have -- for $g\in G$ with $\norm{g}$
bounded away from $1$ -- that
\begin{equation}\label{stronger-statement}
  \v{hg}{g}=O(\delta).
\end{equation}
(Note that, for $\delta$ sufficiently small, we have $hg,g\not \in
K$.) 
From this and the above considerations, \eqref{v-growth}  follows
directly since $h_2^{-1}$ satisfies \eqref{another-condition}.

To prove \eqref{stronger-statement}, we first note that for any angle
$0\leq v \leq \pi$ we have\footnote{Note that the right-hand side of \eqref{basic-inequality} is twice the Euclidean distance between $(\cos
v,\sin v)$ and $(1,0)$.}
\begin{equation}
  \label{basic-inequality}
  v\leq 2\sqrt{2(1-\cos v)}.
\end{equation}
For $g=k_ga_t k'_g$ we have, by Proposition
\ref{angle-props}, that 
\begin{equation*}
  \v{hg}{g}
  =\v{h_ga_t}{a_t},
\end{equation*} 
where $h_g=k_g^{-1}hk_g$. Note that
\begin{equation}\label{slightly-worse}\max_{i,j}\abs{(h_g-I)_{ij}}=O(\delta).
\end{equation} 
Let $u:\R^{n+1}\to\R^n$ be the
mapping which drops the last coordinate in the standard basis
representation. From \eqref{slightly-worse} follows, using equivalence
of norms on finite dimensional vector spaces, that 
\begin{equation}\label{bound-again}
  \abs{u(h_{g,1})-u(e_1)}=O(\delta), \quad \abs{u(h_{g,n+1})}=O(\delta),
\end{equation}
where $h_{g,i}$ is the $i$th column of $h_g$. 

Now, by definition $\cos\v{hg}{g}$ is the first entry
in $u(h_ga_te_{n+1})/\abs{u(h_ga_te_{n+1})}$.
Since this vector has
norm $1$, the sum of squares of the remaining entries equals
$\sin^2\v{hg}{g}$. It follows that 
\begin{equation}\label{abs=sqrt}
  \abs{\frac{u(h_ga_te_{n+1})}{\abs{u(h_ga_te_{n+1})}}-u(e_1)}=\sqrt{2(1-\cos\v{hg}{g})}.
\end{equation}
We note that, since $a_te_{n+1}=\sinh(t)  \cdot e_1+ \cosh(t) \cdot
e_{n+1}$, we have 
\begin{equation*}
u(h_ga_te_{n+1})=\sinh(t)\cdot
  u(h_{g,1})+\cosh(t)\cdot u(h_{g,n+1}).
\end{equation*}
Therefore, using \eqref{bound-again} and the fact that $t$ is bounded away from
zero, we have
  \begin{equation} \label{abs-estimation}
    \abs{u(h_ga_te_{n+1})}
    =\sinh t(1+O(\delta)).
  \end{equation}
Finally, combining \eqref{basic-inequality}, \eqref{abs=sqrt}, \eqref{abs-estimation}, and \eqref{bound-again} we find,
after a small computation that
\begin{equation*}
  \v{hg}{g}\leq 2\sqrt{2(1-\cos\v{hg}{g})}=2\abs{\frac{u(h_ga_te_{n+1})}{\abs{u(h_ga_te_{n+1})}}-u(e_1)}=O(\delta),
\end{equation*}
which proves the claim.
\end{proof}

\begin{lemma}\label{stability-left-right} For any $\delta>0$ sufficiently small the following holds:
  Let $g,M\in G$, $M\not \in K$. Assume $\norm{g}\geq
  10\norm{M}$. Then, for every $g_1\in B_{\delta_1} g D_\delta$, we have
  $\v{g_1}{g_1M}$, $\v{g}{gM}<\pi/2$  and
  \begin{enumerate}[(i)]
  \item \label{uno} $\norm{g_1M}^2=\norm{gM}^2+O(\delta\norm{g}^2\norm{M}^2)$,
 \item \label{dos}$\tan \v{g_1}{g_1M}=\tan \v{g}{gM}+O\left(\frac{\delta\norm{M}^4}{\norm{g}^2}\right)$.
  \end{enumerate}
\end{lemma}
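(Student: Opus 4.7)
The strategy is to derive both estimates from the exact formulas of Proposition \ref{change-under-right-mult}. Set $v:=\pi-\v{g^{-1}}{M}$ and $v_1:=\pi-\v{g_1^{-1}}{M}$. Lemma \ref{fattening-stability} yields $|t(g_1)-t(g)|=O(\delta)$ and $\v{g_1^{-1}}{g^{-1}}=O(\delta)$; the latter combined with the triangle inequality (Proposition \ref{angle-props}(vi)) gives $|v_1-v|=O(\delta)$. Moreover, $\norm{g}\geq 10\norm{M}$ forces $\cosh t(g)\geq 100\cosh t(M)$, so $t(g)-t(M)$ is bounded below by an absolute positive constant, and for $\delta$ small the same holds for $t(g_1)-t(M)$. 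Thus Proposition \ref{change-under-right-mult} applies at both $g$ and $g_1$ and in particular gives $\v{g}{gM},\v{g_1}{g_1M}<\pi/2$.

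For (i), I would subtract the two instances of \eqref{norm-change}:
\begin{equation*}
\norm{g_1M}^2-\norm{gM}^2=2\cosh t(M)\bigl(\cosh t(g_1)-\cosh t(g)\bigr)+2\sinh t(M)\bigl(\cos v_1\sinh t(g_1)-\cos v\sinh t(g)\bigr).
\end{equation*}
The mean value theorem gives $\cosh t(g_1)-\cosh t(g),\;\sinh t(g_1)-\sinh t(g)=O(\delta\cosh t(g))$ and $\cos v_1-\cos v=O(\delta)$, so the whole difference is $O(\delta\cosh t(g)\,e^{t(M)})=O(\delta\norm{g}^2\norm{M}^2)$.

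For (ii), write $a=\sinh t(M)$, $b=\cosh t(M)$, $s=\sinh t(g)$, $c=\cosh t(g)$, and $D=bs+ac\cos v$, so that \eqref{angle-change} reads $T:=\tan\v{g}{gM}=a\sin v/D$; define $T_1$ analogously for $g_1$. The crucial observation is the pair of identities
\begin{equation*}
D\pm(ac+bs\cos v)=(bs\pm ac)(1\pm\cos v),
\end{equation*}
whose right-hand sides are non-negative (since $bs>ac$ and $\cos v\in[-1,1]$); these yield the sharp bound $|ac+bs\cos v|\leq D$. The parallel calculation $(bc+as\cos v)-D=(c-s)(b-a\cos v)$, together with $e^{t(M)-t(g)}\leq\sinh(t(g)-t(M))$ coming from the lower bound on $t(g)-t(M)$, gives $|bc+as\cos v|\leq 2D$. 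Direct differentiation yields $\partial_v T=a(ac+bs\cos v)/D^2$ and $\partial_t T=-a\sin v(bc+as\cos v)/D^2$, whence both partials are $O(a/D)=O\bigl(\sinh t(M)/\sinh(t(g)-t(M))\bigr)=O(\norm{M}^4/\norm{g}^2)$. Applying the mean value theorem along the straight path from $(v,t(g))$ to $(v_1,t(g_1))$, on which $D$ varies only negligibly for $\delta$ small, then gives the claim.

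The main difficulty is the sharpness of (ii): a crude estimate replacing $|ac+bs\cos v|$ by its trivial bound $bs+ac=\sinh(t(g)+t(M))$ yields only $O(\delta\norm{M}^8/\norm{g}^2)$, which already exceeds the trivial bound $T=O(\norm{M}^4/\norm{g}^2)$. The algebraic identities above encode precisely the cancellation that keeps $|\partial_v T|$ at the same scale as $T$ itself, and thereby furnish the correct error rate.
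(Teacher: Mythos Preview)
Your argument is correct and follows the same route as the paper: both reduce to the mean value theorem applied to the explicit norm and tangent formulas of Proposition~\ref{change-under-right-mult}, after invoking Lemma~\ref{fattening-stability} for the $O(\delta)$ control on $t(g_1)-t(g)$ and $v_1-v$. The paper quotes the needed partial-derivative bounds $\partial G_2/\partial t,\partial G_2/\partial v=O(\norm{M}^4/\cosh t)$ from \cite[Lemma~2.20]{KelmerKontorovich:2013}, whereas you supply a self-contained proof of precisely these bounds via the factorizations $D\pm(ac+bs\cos v)=(bs\pm ac)(1\pm\cos v)$ and $(bc+as\cos v)-D=(c-s)(b-a\cos v)$; this is a clean way to exhibit the cancellation that keeps $|\partial_v T|\le a/D$ at the same scale as $T$ itself, and it makes the lemma independent of the external reference.
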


\begin{proof}
  The proof follows closely the proof of \cite[Lemma
  2.20]{KelmerKontorovich:2013}. 
  Consider the functions (denoted by  $F_1,F_2$ in \cite{KelmerKontorovich:2013}) 
  \begin{align*}
    G_1(t,v)&:=2(\cosh t(M)\cosh t + \cos(\pi -v)\sinh t(M)\sinh t),\\
G_2(t,v)&:=\frac{\sin (\pi -v)\sinh t(M)}{\cosh t(M)\sinh t + \cos(\pi - v)\cosh t\sinh t(M)}.
  \end{align*}
The assumption $\norm{g}\geq 10\norm{M}$ implies that $t(g)>t(M)+1$.
Therefore, using Lemma \ref{fattening-stability}, we find that $t(g_1),t(g)>t(M)$, which by Proposition \ref{change-under-right-mult}
implies that $\v{g_1}{g_1M}$, $\v{g}{gM}<\pi/2$.

 By Proposition \ref{change-under-right-mult}, we have 
\begin{align*}
  \norm{g_1M}^2-\norm{gM}^2&=G_1(t(g_1), \v{g_1^{-1}}{M})-G_1(t(g),
  \v{g^{-1}}{M}),\\
\tan\v{g_1}{g_1M}-\tan\v{g}{gM}&=G_2(t(g_1), \v{g_1^{-1}}{M})-G_2(t(g), \v{g^{-1}}{M}).
\end{align*}
Furthermore, using Lemma \ref{fattening-stability} and Proposition \ref{angle-props}, we find 
\begin{equation*}\abs{t(g_1)-t(g)},
\abs{\v{g_1^{-1}}{M}-\v{g^{-1}}{M}}=O(\delta).
\end{equation*} 
Finally, using the mean value
theorem and the Cauchy-Schwarz inequality, the result follows from the following
bounds on the partial derivatives of $G_1$ and $G_2$ valid when $t>t(M)+1$:
\begin{align*}
\frac{\partial G_1}{\partial t}(t,v), \frac{\partial
    G_1}{\partial v}(t,v)&=O(\norm{M}^2\cosh t),\\
     \quad \frac{\partial G_2}{\partial t}(t,v), \frac{\partial
    G_2}{\partial v}(t,v)&=O(\norm{M}^4/\cosh t) 
\end{align*}
(see \cite[Lemma
  2.20]{KelmerKontorovich:2013} for the proofs of these estimates). 
\end{proof}

\section{Volume computations}\label{vols}

In this section we consider the problem of asymptotically determining the volume of the set
\begin{equation*}
\mathcal R_M(Q,\xi)=\left\{g\in B_Q : \norm{gM}\leq Q,\v{g}{gM}<\frac{2\xi}{Q^2}\right\}
\end{equation*}
for $M\in\G$, $M\notin K$. In order to formulate our results we introduce some more notation. We let
\begin{equation*}
A:=A(l)=\cosh l,\qquad B:=B(l)=\sinh l,\qquad C:=C(l)=2\sinh(l/2).
\end{equation*}
For $\xi\leq B$, we define the real numbers
\begin{align}
  \alpha&:=\alpha(\xi,l)=\sqrt{1-\frac{\xi^2}{B^2}},\label{alpha}\\
\lambda_\pm&:=\lambda_\pm(\xi,l)=\frac{-\xi^2 \frac A B\pm\sqrt{1-\frac{\xi^2}{B^2}}}{\xi^2+1}.\label{lambda}
\end{align}
Furthermore, we introduce the set
\begin{equation}\label{set-I}
  I(\xi,l):=\begin{cases}
[-1,\lambda_-)\cup(\alpha, 1]& \textrm {if $\xi\leq C$},\\
[-1,\lambda_-)\cup(\lambda_+, -\alpha)\cup(\alpha,1]& \textrm {if $C<\xi\leq B$},\\
[-1,1]& \textrm {if $B< \xi$}.
\end{cases}
\end{equation}
In situations where the variable $l$ is temporarily fixed to equal $l=t(M)$ for some element $M\in\G$, we will also find it convenient to use the notation
\begin{align*}
A_M=A(t(M)),\quad B_M=B(t(M)),\quad C_M=C(t(M)),
\end{align*}
and
\begin{align}\label{subindex_M}
\alpha_M(\xi)=\alpha(\xi,t(M)),\quad \lambda_{\pm,M}(\xi)=\lambda_\pm(\xi,t(M)).
\end{align}
The function
\begin{align}\label{DEFOFF}
f_{\xi}(l)=\frac1{\xi^n}\int_{I(\xi,l)}\left(1-y^2\right)^{n-2}\left(y+\coth l\right)^{-(n-1)}\,dy
\end{align}
(where $I(\xi,l)$ is defined by \eqref{set-I}) will play a central role in the rest of the
paper. It enters our discussion as part of the following result:

\begin{theorem}\label{volume-theorem}
For every $M\in\G$, $M\notin K$, and for $\xi/Q^2$ sufficiently small, we have
\begin{align}\label{VOLUMETHM}
\textup{vol}\left(\mathcal R_M\left(Q,\xi\right)\right)=\frac{\omega_{n-1}Q^{2(n-1)}}{2^{n-1}}\int_0^{\xi}f_{\zeta}\left(t(M)\right)\,d\zeta+O\left(g(\xi)\|M\|^{2(n-1)}Q^{2\frac{(n-1)^2}{n+1}}\right)
\end{align}
as $Q\to\infty$, where
\begin{align}\label{gdef}
g(\xi)=\xi^{n-1}+\xi^{-(n-1)}.
\end{align}
\end{theorem}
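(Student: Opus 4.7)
The plan is to convert $\vol{\mathcal R_M(Q,\xi)}$ into a two-dimensional integral using the Cartan decomposition, explicitly evaluate the main term via substitutions adapted to Proposition \ref{change-under-right-mult}, and then bound the error terms from several linearizations.

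First, by Proposition \ref{KAKmeasure} the volume equals
\begin{equation*}
\omega_n\int_K\int_0^{T_Q}\int_K \mathbf{1}_{\mathcal R_M(Q,\xi)}(k_1a_tk_2)(\sinh t)^{n-1}\,dk_1\,dt\,dk_2,
\end{equation*}
where $T_Q=\cosh^{-1}(Q^2/2)$. By Proposition \ref{angle-props}(i) the integrand is independent of $k_1$, so the $k_1$-integral contributes $1$. Writing $M=k_Ma_{t(M)}k_M'$ and replacing the integration variable $k_2$ by $k_2 k_M$ (Haar-preserving), the conditions defining $\mathcal R_M(Q,\xi)$ become $\|a_tk_2a_{t(M)}k_M'\|\leq Q$ and $\v{a_t}{a_tk_2a_{t(M)}k_M'}<2\xi/Q^2$; the trailing factor $k_M'$ drops out by right-$K$-invariance of $\|\cdot\|$ and by Proposition \ref{angle-props}(ii). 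Parameterizing $k_2=m_1k^\alpha m_2$ as in Lemma \ref{MkMdecomplemma}, the fact that $m_1,m_2\in M=Z_K(A)$ commute with $a_{t(M)}$ lets the $m_i$ be absorbed into $K$-invariance of the norm and angle. Applying Proposition \ref{MkMmeasure} reduces the problem to
\begin{equation*}
\vol{\mathcal R_M(Q,\xi)}=\omega_{n-1}\int_0^{T_Q}\int_0^\pi \mathbf{1}\{\|a_tk^\alpha a_{t(M)}\|\leq Q,\ \v{a_t}{a_tk^\alpha a_{t(M)}}<2\xi/Q^2\}(\sin\alpha)^{n-2}(\sinh t)^{n-1}\,d\alpha\,dt.
\end{equation*}

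Next, Proposition \ref{change-under-right-mult} with $g=a_t$ and with ``$M$'' replaced by $k^\alpha a_{t(M)}$ (whose $\theta$-parameter equals $\alpha$ by Lemma \ref{MkMdecomplemma}) gives
\begin{equation*}
\|a_tk^\alpha a_{t(M)}\|^2=2(A\cosh t+yB\sinh t),\qquad \tan\v{a_t}{a_tk^\alpha a_{t(M)}}=\frac{B\sqrt{1-y^2}}{A\sinh t+yB\cosh t},
\end{equation*}
where $y=\cos\alpha$ and $A=A(t(M))$, $B=B(t(M))$. The substitution $U=\cosh t$, $y=\cos\alpha$ produces Jacobians $(U^2-1)^{(n-2)/2}\,dU$ and $(1-y^2)^{(n-3)/2}\,dy$. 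Setting $R=Q^2/2$ and linearizing $\tan(2\xi/Q^2)\approx \xi/R$ together with $\sqrt{U^2-1}\approx U$, the conditions become, up to lower-order corrections, $U\leq R/\max(1,A+yB)$ and $U>U_{\min}(y):=RB\sqrt{1-y^2}/(\xi(A+yB))$. A direct case analysis shows that the admissible $y$-region is non-empty precisely when $y\in I(\xi,l)$ of \eqref{set-I}: the threshold $|y|>\alpha(\xi,l)$ arises when $A+yB\geq 1$, while the pair $\lambda_\pm(\xi,l)$ arises as roots of the quadratic $(1+\xi^2)B^2y^2+2\xi^2AB y+\xi^2A^2-B^2=0$ obtained from $U_{\min}(y)\leq R$ when $A+yB<1$. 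The trichotomy $\xi\leq C$, $C<\xi\leq B$, $\xi>B$ reflects the relative positions of $\pm\alpha(\xi,l)$, $\lambda_\pm(\xi,l)$ and $-\tanh(l/2)=(1-A)/B$.

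The main term is most cleanly recovered by differentiating in $\xi$: only $U_{\min}$ depends on $\xi$ and $dU_{\min}/d\xi=-U_{\min}/\xi$, so to leading order
\begin{equation*}
\frac{d}{d\xi}\vol{\mathcal R_M(Q,\xi)}=\omega_{n-1}\int_{I(\xi,l)}\frac{U_{\min}(y)^{n-1}}{\xi}(1-y^2)^{(n-3)/2}\,dy.
\end{equation*}
The factor $U_{\min}^{n-1}$ contributes $(1-y^2)^{(n-1)/2}$, which combines with the Jacobian $(1-y^2)^{(n-3)/2}$ to give $(1-y^2)^{n-2}$, while $(A+yB)^{-(n-1)}=B^{n-1}(y+\coth l)^{-(n-1)}$ produces the remaining factors in the integrand defining $f_\xi(l)$. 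One obtains $\omega_{n-1}R^{n-1}f_\xi(l)$, and since both the volume and the main term vanish at $\xi=0$, integrating in $\xi$ yields the claimed identity $(\omega_{n-1}Q^{2(n-1)}/2^{n-1})\int_0^\xi f_\zeta(t(M))\,d\zeta$.

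The hard part will be the uniform error control. Three approximations enter: $\tan(2\xi/Q^2)\approx 2\xi/Q^2$, $(U^2-1)^{(n-2)/2}\approx U^{n-2}$, and $\sqrt{U^2-1}\approx U$ in the norm condition. Each perturbs the interval $(U_{\min},U_{\max})$ and the integrand above. Since the main integrand has unbounded behaviour at the endpoints of $I(\xi,l)$ (where $U_{\min}$ and $U_{\max}$ coalesce) and since $f_\zeta(l)$ carries a $\zeta^{-n}$ prefactor, the boundary-layer contributions give rise to the piece $\xi^{-(n-1)}$ of $g(\xi)$, while the bulk errors produce the piece $\xi^{n-1}$. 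The factor $\|M\|^{2(n-1)}$ tracks $B\asymp\|M\|^2$. Obtaining the specific exponent $Q^{2(n-1)^2/(n+1)}$ should require optimally truncating a boundary strip of width $\sim Q^{-2/(n+1)}$ near each endpoint of $I(\xi,l)$ and balancing the truncated boundary contribution against the in-bulk error.
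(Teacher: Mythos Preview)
Your reduction to a two-dimensional integral via the Cartan and $Mk^\theta M$ decompositions is correct and matches the paper's route; likewise, recovering the main term by differentiating in $\xi$ is exactly how the paper passes from its Proposition~\ref{volume-proposition} to Theorem~\ref{volume-theorem}. (One small omission: when you differentiate in $\xi$, the domain $I(\xi,l)$ also moves, and you need the observation---made explicitly in the paper---that the boundary terms at $\alpha$ and $\lambda_\pm$ vanish because $U_{\min}=U_{\max}$ there.)

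The genuine gap is the error analysis. Your proposed mechanism---truncating a $y$-strip of width $\sim Q^{-2/(n+1)}$ near the endpoints of $I(\xi,l)$---does not match the actual source of the error and would not produce the stated bound. The three linearizations you list are all worst when $U=\cosh t$ is \emph{small}, since each has relative size $O(U^{-2})$ (equivalently, $1-\tanh t=O(X^{-2})$ once $\|g\|^2>X$); they are not concentrated near the $y$-endpoints. Indeed at $y=\lambda_-$ one has $U_{\min}=U_{\max}=R=Q^2/2$, so $U$ is as large as possible there. The paper therefore truncates in the \emph{radial} variable: one removes the region $\|g\|^2\le X$ at cost $O(X^{n-1})$, and on the remainder the perturbations of the $x$-bounds are $O(\|M\|^6/X^2)$ and $O(\xi\|M\|^4/Q^4+\|M\|^6/(\xi X^2))$, leading to an error $O\big((1+\xi^{-(n-1)})\|M\|^{2(n+1)}Q^{2(n-1)}/X^2\big)$ plus a smaller term. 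Balancing $X^{n-1}$ against $\|M\|^{2(n+1)}Q^{2(n-1)}/X^2$ gives $X=\|M\|^2 Q^{2(n-1)/(n+1)}$ and hence the exponent $Q^{2(n-1)^2/(n+1)}$. Both pieces $\xi^{n-1}$ and $\xi^{-(n-1)}$ of $g(\xi)$ come out of these bound-perturbation terms, not from a boundary-layer versus bulk dichotomy. Your plan becomes a complete proof once you replace the $y$-strip idea with this radial truncation-and-balance argument.
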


\begin{remark}\label{M-remark}
Note that we need $\|M\|=o\left(Q^{2/(n+1)}\right)$ for the error term in \eqref{VOLUMETHM} to be non-trivial. 
\end{remark}

\begin{remark}\label{n=2,3}
For any given value of $n$, we can perform the integration in \eqref{DEFOFF} and obtain completely explicit expressions for the function $f_\xi$. When $n=2$ and $n=3$, we have
\begin{equation*}
f_{\xi}(l)=\frac{2}{\xi^2}\times
\begin{cases}
l-\log\big(A+\sqrt{B^2-\xi^2}\big)& \textrm {if $\xi\leq C$},\\
l+\log(1+\xi^2)-2\log\big(A+\sqrt{B^2-\xi^2}\big)& \textrm {if $C<\xi\leq B$},\\
l&  \textrm {if $B< \xi$},
\end{cases}
\end{equation*}
and
\begin{equation*}
f_{\xi}(l)=\frac{4}{\xi^3}\times
\begin{cases}
\frac{A}{B}\log\Big(\frac{A+B}{A+\sqrt{B^2-\xi^2}}\Big)+\frac{(\xi^2+2)\sqrt{B^2-\xi^2}+A\xi^2}{2B(\xi^2+1)}-1& \textrm {if $\xi\leq C$},\\
\frac{A}{B}\log\Big(\frac{(A+B)(\xi^2+1)}{(A+\sqrt{B^2-\xi^2})^2}\Big)+\frac{(\xi^2+2)\sqrt{B^2-\xi^2}}{B(\xi^2+1)}-1& \textrm {if $C<\xi\leq B$},\\
\frac{A}{B}l-1&  \textrm {if $B< \xi$},
\end{cases}
\end{equation*}
respectively. 
When $n$ gets larger such expressions become very cumbersome. Plotting
$f_\xi(l)$ for fixed $\xi$ or $l$ seems  to indicate, however,  that the
\lq complexity\rq{} of $f_\xi(l)$ doesn't grow significantly when $n$ gets larger (cf.\ Figure \ref{n=2} and Figure \ref{n=7}).
\begin{figure}
\centering
\includegraphics{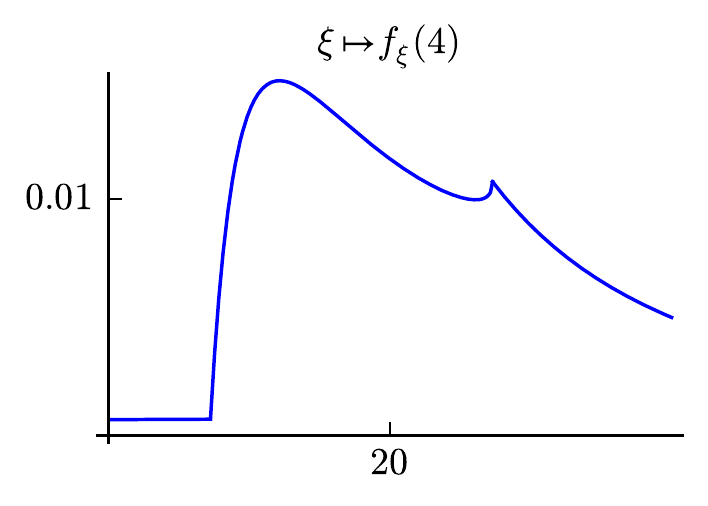}\includegraphics{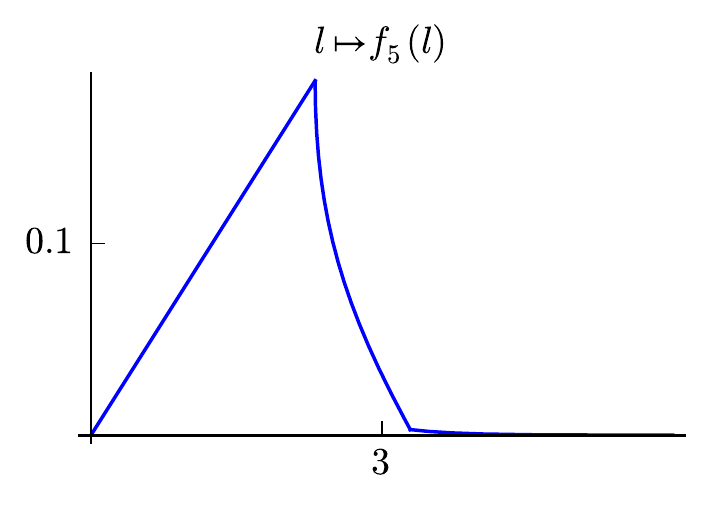}
\caption{Plots of $f_\xi(l)$ with one argument fixed when $n=2$.}\label{n=2}
\end{figure}
\begin{figure}
\centering
\includegraphics{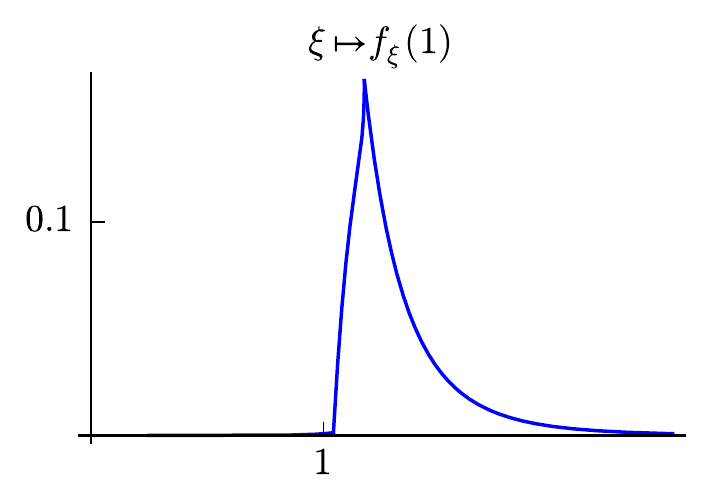}\includegraphics{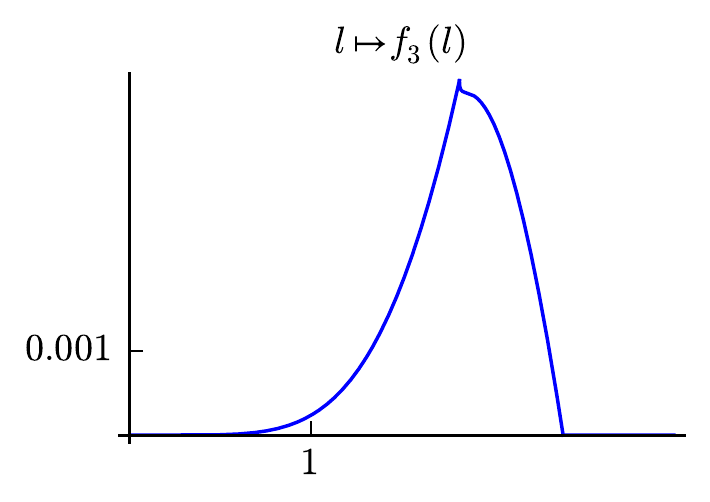}
\caption{Plots of $f_\xi(l)$ with one argument fixed when $n=7$.}\label{n=7}
\end{figure}
\end{remark}

We will prove Theorem \ref{volume-theorem} in two steps. The general strategy of the proof follows closely the one in \cite[Sect. 3]{KelmerKontorovich:2013}. However, since some parts of the proof are computationally different, we have chosen to give a detailed proof for completeness.

\begin{proposition}\label{volume-proposition}
For every $M\in\G$, $M\notin K$, and for $\xi/Q^2$ sufficiently small, we have
\begin{multline}\label{firstvolume}
\textup{vol}\left(\mathcal R_M\left(Q,\xi\right)\right)=\frac{\omega_{n-1}Q^{2(n-1)}}{2^{n-1}}\int_{-1}^1(1-y^2)^{(n-3)/2}\int_{J_{\xi}(y)}x^{n-2}\,dxdy\\
+O\left(g(\xi)\|M\|^{2(n-1)}Q^{2\frac{(n-1)^2}{n+1}}\right)
\end{multline}
as $Q\to\infty$, where $g(\xi)$ is given by \eqref{gdef} and the interval $J_{\xi}(y)$ is defined by 
\begin{equation*}
J_{\xi}(y):=\left\{x\in[0,1] : \frac{B_M\sqrt{1-y^2}}{\xi(A_M+B_My)}\leq x\leq\frac1{A_M+B_My}\right\}.
\end{equation*} 
\end{proposition}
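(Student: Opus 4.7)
The plan is to parametrize the integration over $g$ using the Cartan decomposition $g=k_1a_tk_2$, reduce the $K\times K$ integration to a single angular variable, and then change variables to $(y,x)$ to match the statement. Applying Proposition \ref{KAKmeasure}, the three conditions defining $\mathcal R_M(Q,\xi)$ depend on $g$ only through $t$ and the angle $v:=\v{k_2M}{g_N}$: the condition $g\in B_Q$ reads $2\cosh t\le Q^2$, while Proposition \ref{change-under-right-mult} expresses $\norm{gM}$ and $\v{g}{gM}$ through $t$, $t(M)$, and $v$, once one uses Proposition \ref{angle-props}(i)--(v) to rewrite $\pi-\v{g^{-1}}{M}$ as $\v{k_2M}{g_N}$. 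The $k_1$-integration therefore contributes only $\vol{K}=1$.

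Next, I would absorb the left $K$-factor $k_M$ of $M=k_Ma_{t(M)}k_M'$ into $k_2$ via the substitution $k_2\mapsto k_2k_M^{-1}$, so that the relevant angle becomes $\v{k_2a_{t(M)}}{g_N}$. Writing $k_2=m_1k^\phi m_2$ via Lemma \ref{MkMdecomplemma}, and using that $m_2\in M$ commutes with $a_{t(M)}$, that right $K$-factors do not affect the angle, and that $m_1\in M$ fixes $e_1$ and $e_{n+1}$, a direct inspection of the first coordinate of $m_1k^\phi a_{t(M)}e_{n+1}$ yields $\v{k_2a_{t(M)}}{g_N}=\phi$. Applying Proposition \ref{MkMmeasure} then reduces the $K$-integration to an integral in $\phi$ with weight $(\sin\phi)^{n-2}$, producing
\[
\vol{\mathcal R_M(Q,\xi)}=\omega_{n-1}\int_0^\infty(\sinh t)^{n-1}\int_0^\pi\mathbf 1_{E(t,\phi)}(\sin\phi)^{n-2}\,d\phi\,dt,
\]
where $E(t,\phi)$ encodes the three defining inequalities in the $(t,\phi)$ variables.

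The third step is to change variables to $y:=\cos\phi$ and $x:=2\sinh t/Q^2$. Under the two leading approximations $\cosh t=\sinh t+O(e^{-t})$ and $\tan(2\xi/Q^2)=2\xi/Q^2+O((\xi/Q^2)^3)$, the conditions from Proposition \ref{change-under-right-mult} reduce to $y\in[-1,1]$, $x\in[0,1]$, $x\le1/(A_M+B_My)$, and $x\ge B_M\sqrt{1-y^2}/(\xi(A_M+B_My))$; that is, $x\in J_\xi(y)$. The Jacobian factors supply $(\sinh t)^{n-1}\,dt\sim(Q^{2(n-1)}/2^{n-1})\,x^{n-2}\,dx$ and $(\sin\phi)^{n-2}\,d\phi=(1-y^2)^{(n-3)/2}\,dy$, producing the main term in the proposition.

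To control the errors I would split the $t$-integration at $t_0:=\frac{2(n-1)}{n+1}\log Q$, chosen to balance the two sources of error. In the range $t<t_0$, both the exact and approximated volumes are bounded crudely by $\omega_n\int_0^{t_0}(\sinh t)^{n-1}\,dt=O(e^{(n-1)t_0})=O(Q^{2(n-1)^2/(n+1)})$. In the range $t\ge t_0$, the symmetric difference of the exact and approximated regions is controlled by the $O(e^{-t})$ perturbation of each of the three boundary surfaces; transferring this through the Jacobians and integrating against $x^{n-2}(1-y^2)^{(n-3)/2}$ yields a bound of order $Q^{2(n-1)}e^{-2t_0}=Q^{2(n-1)^2/(n+1)}$ multiplied by an $M$- and $\xi$-dependent factor. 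The main obstacle will be to show that this factor is $O(g(\xi)\norm{M}^{2(n-1)})$; the dichotomy $g(\xi)=\xi^{n-1}+\xi^{-(n-1)}$ reflects the two regimes in which $J_\xi(y)$ is governed by its upper bound (large $\xi$) or its lower bound (small $\xi$), while the $\norm{M}^{2(n-1)}$ factor arises from the worst-case behaviour of $(A_M+B_My)^{-(n-1)}$ at $y$ near $-1$, where $A_M+B_My\ge e^{-t(M)}\sim\norm{M}^{-2}$.
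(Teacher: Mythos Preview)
Your overall approach mirrors the paper's proof: KAK-reduction to a $(t,\phi)$-integral, change of variables to $(x,y)$, and a truncation at some threshold to control the replacement of $\cosh t$ by $\sinh t$ and of $\tan(2\xi/Q^2)$ by $2\xi/Q^2$. The reduction of the $K$-integral to the single angular variable $\phi$ is correct and matches the paper (which carries out the same computation via Propositions~\ref{KAKmeasure} and~\ref{MkMmeasure}, writing $x=2\cosh t/Q^2$ rather than $2\sinh t/Q^2$, a cosmetic difference).

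There is, however, a concrete gap in your truncation step. With the threshold $t_0=\frac{2(n-1)}{n+1}\log Q$ independent of $M$, the contribution from $t\ge t_0$ does \emph{not} carry the prefactor $O(g(\xi)\norm{M}^{2(n-1)})$ you hope for. Tracking the boundary perturbations carefully (the paper does this via the auxiliary variable $z=\tanh t$ and finds perturbations of size $O(\norm{M}^6/X^2)$ and $O(\norm{M}^6/(\xi X^2))$ to the two endpoints of $J_\xi(y)$, with $X\asymp e^{t_0}$), one obtains after integrating against $x^{n-2}(1-y^2)^{(n-3)/2}$ an error of order
\[
(1+\xi^{-(n-1)})\,\norm{M}^{2(n+1)}\,Q^{2(n-1)}e^{-2t_0},
\]
which is too large by a factor $\norm{M}^4$. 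The fix is to make the truncation $M$-dependent: the paper takes $X=\norm{M}^2 Q^{2(n-1)/(n+1)}$ (equivalently $t_0\approx \frac{2(n-1)}{n+1}\log Q+2\log\norm{M}$), which raises the trivial bound on $\{t<t_0\}$ to $O(\norm{M}^{2(n-1)}Q^{2(n-1)^2/(n+1)})$ but gains exactly the missing $\norm{M}^{-4}$ on the other side, balancing both contributions at the target $g(\xi)\norm{M}^{2(n-1)}Q^{2(n-1)^2/(n+1)}$. Your heuristic for the $\norm{M}^{2(n-1)}$ factor (worst-case $(A_M+B_My)^{-(n-1)}$ near $y=-1$) is on the right track, but the boundary-perturbation estimates themselves already carry additional powers of $\norm{M}$, so the threshold must depend on $\norm{M}$ for the bookkeeping to close.
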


\begin{proof}
To begin, we note that Remark \ref{M-remark} implies that we may assume that
\begin{align}\label{M-remark2}
\|M\|=o\left(Q^{2/(n+1)}\right).
\end{align}
We introduce a large positive parameter $X=X(M,Q)$ satisfying
\begin{equation}\label{Xinterval}
5\norm{M}^2<X<Q^2,
\end{equation}
and use it to truncate the region $\mathcal R_M\left(Q,\xi\right)$ as follows:
\begin{equation*}
\widetilde{\mathcal R}_M\left(Q,\xi\right):=\mathcal R_M\left(Q,\xi\right)\cap\{g\in G : \norm{g}^2>X\}.
\end{equation*}
At the end of this proof (see \eqref{X}), we will determine a value of $X$ that balances the sizes of our error terms. For now, we will confine ourselves to observe that \eqref{BQvolume} immediately implies that 
\begin{equation}\label{truncated volume}
\text{vol}\left(\mathcal R_M\left(Q,\xi\right)\right)=\text{vol}\big(\widetilde{\mathcal R}_M\left(Q,\xi\right)\big)+O(X^{n-1}).
\end{equation}

Next, we describe the conditions determining the set
$\widetilde{\mathcal R}_M\left(Q,\xi\right)$ in terms of the $KA^+K$
coordinates. Writing $g=k_ga_tk_g'$ (recall \eqref{cartan}), we first
note that $\widetilde{\mathcal R}_M\left(Q,\xi\right)$ is left
$K$-invariant, i.e.\ it does not impose any restriction on the first
$K$-component $k_g$. Furthermore, using Proposition
\ref{change-under-right-mult} together with \eqref{Xinterval}, we find
that $\widetilde{\mathcal R}_M\left(Q,\xi\right)$ is determined by the
following inequalities once $\xi/Q^2$ is sufficiently small:
\begin{align}
X<2\cosh t&\leq Q^2,\label{tildeR-in-KAK1}\\
2(A_M\cosh t+B_M\cos v\sinh t)&\leq Q^2,\label{tildeR-in-KAK2}\\
\frac{B_M\sin v}{A_M\sinh t+B_M\cos v\cosh t}&<\tan\left(\frac{2\xi}{Q^2}\right),\label{tildeR-in-KAK3}
\end{align}
where $v=\pi-\v{g^{-1}}{M}=\v{k_g'k_Ma_{t(M)}}{g_N}$. We let $k=k_g'k_M$ and note that $k=m_1k^vm_2$ for some appropriate $m_1,m_2\in M$. We also let $\chi_{\widetilde{\mathcal R}_M\left(Q,\xi\right)}$ denote the indicator function of the set $\widetilde{\mathcal R}_M\left(Q,\xi\right)$. However, since $\widetilde{\mathcal R}_M\left(Q,\xi\right)$ is left $K$-invariant, we will also let $\chi_{\widetilde{\mathcal R}_M\left(Q,\xi\right)}$ denote the indicator function of the set of $(t,k)\in[0,\infty)\times K$ satisfying the conditions \eqref{tildeR-in-KAK1}, \eqref{tildeR-in-KAK2} and \eqref{tildeR-in-KAK3}. Now, using Proposition \ref{KAKmeasure}, we obtain
\begin{align*}
\text{vol}\big(\widetilde{\mathcal R}_M\left(Q,\xi\right)\big)&=\omega_n\int_K\int_0^{\infty}\int_K \chi_{\widetilde{\mathcal R}_M\left(Q,\xi\right)}(k_ga_tk_g')(\sinh t)^{n-1}\,dk_gdtdk_g'\\
&=\omega_n\int_K\int_0^{\infty}\chi_{\widetilde{\mathcal R}_M\left(Q,\xi\right)}(a_tk_g')(\sinh t)^{n-1}\,dtdk_g'.
\end{align*}
Furthermore, changing variables $k_g'\mapsto k$ and applying Proposition \ref{MkMmeasure}, we arrive at\footnote{The calculation in \eqref{tildeR-integral} uses Proposition \ref{MkMmeasure} and hence is valid only for $n>2$. However, exchanging the use of Proposition \ref{MkMmeasure} with an easy symmetry argument, one easily finds that the last line of \eqref{tildeR-integral} is correct also in the case $n=2$ (cf.\ \cite[Prop.\ 3.1]{KelmerKontorovich:2013}).} 
\begin{align}\label{tildeR-integral}
&\text{vol}\big(\widetilde{\mathcal R}_M\left(Q,\xi\right)\big)\\
&=\omega_n^2\omega_{n-1}\int_M\int_0^{\pi}\int_M\int_0^{\infty}\chi_{\widetilde{\mathcal R}_M\left(Q,\xi\right)}(t,m_1k^{v}m_2)(\sin v)^{n-2}(\sinh t)^{n-1}\,dtdm_1dvdm_2\nonumber\\
&=\omega_{n-1}\int_0^{\pi}\int_0^{\infty}\chi_{\widetilde{\mathcal R}_M\left(Q,\xi\right)}(t,k^{v})(\sin v)^{n-2}(\sinh t)^{n-1}\,dtdv.\nonumber
\end{align}

In order to evaluate this integral, we make the following change of variables:
\begin{equation*}
x=\frac{2\cosh t}{Q^2},\qquad y=\cos v.
\end{equation*}
We find that the inequalities \eqref{tildeR-in-KAK1}, \eqref{tildeR-in-KAK2} and \eqref{tildeR-in-KAK3} are transformed into 
\begin{align}
\frac{X}{Q^2}<x&\leq 1,\label{tildeR-in-KAK1'}\\
x(A_M+B_Myz)&\leq 1,\label{tildeR-in-KAK2'}\\
\frac{B_M\sqrt{1-y^2}}{x(A_Mz+B_My)}&<\frac{Q^2}{2}\tan\left(\frac{2\xi}{Q^2}\right),\label{tildeR-in-KAK3''}
\end{align}
where we have used the notation
\begin{equation*}
z:=\tanh t=\sqrt{1-\frac{4}{Q^4x^2}}.
\end{equation*} 

Next, for any $y\in[-1,1]$, we let $\widetilde J_{Q,\xi}(y)$ denote the set of all real numbers $x$ satisfying \eqref{tildeR-in-KAK1'}, \eqref{tildeR-in-KAK2'} and \eqref{tildeR-in-KAK3''}. Then, it follows from \eqref{tildeR-integral} that
\begin{align}\label{the-volume-integral}
\text{vol}\big(\widetilde{\mathcal R}_M\left(Q,\xi\right)\big)&=\frac{\omega_{n-1}Q^2}{2}\int_{-1}^1(1-y^2)^{(n-3)/2}\int_{\widetilde J_{Q,\xi}(y)}\left(\frac{Q^4x^2}{4}-1\right)^{n/2-1}\,dxdy\nonumber\\
&=\frac{\omega_{n-1}Q^{2(n-1)}}{2^{n-1}}\int_{-1}^1(1-y^2)^{(n-3)/2}\int_{\widetilde J_{Q,\xi}(y)}x^{n-2}\,dxdy+\mathcal E(Q),
\end{align}
where
\begin{align*}
\mathcal E(Q)=
\begin{cases}
0& \text{ if $n=2$,}\\
O_\epsilon(Q^{\epsilon})&  \text{ if $n=3$,}\\
O(Q^{2(n-3)})&  \text{ if $n\geq4$.}
\end{cases}
\end{align*}

We continue by noting that
\begin{align*}
\frac{\sqrt{1-y^2}}{A_M+B_My}&\leq1,\\
1-z&=O(X^{-2}),\\
\frac{Q^2}{2}\tan\left(\frac{2\xi}{Q^2}\right)&=\xi+O\left(\frac{\xi^3}{Q^4}\right),
\end{align*}
where the last estimate holds for $\xi/Q^2$ sufficiently small. We also recall from \cite[Prop.\ 3.1]{KelmerKontorovich:2013} that
\begin{align*}
\frac1{A_M+B_My}, \frac1{A_M+B_Myz}, \frac1{A_Mz+B_My}&=O(\norm{M}^2).
\end{align*}
It follows immediately from these observations that the inequality \eqref{tildeR-in-KAK2'} can be replaced by
\begin{equation}\label{upper-bound-on-x}
x\leq\frac1{A_M+B_Myz}=\frac1{A_M+B_My}+O\bigg(\frac{\norm{M}^6}{X^2}\bigg).
\end{equation}
Similarly, we find that \eqref{tildeR-in-KAK3''} can be replaced by
\begin{align}\label{lower-bound-on-x}
x&>\frac{B_M\sqrt{1-y^2}}{(A_Mz+B_My)\big(\frac{Q^2}{2}\tan\big(\frac{2\xi}{Q^2}\big)\big)}\nonumber\\
&=\frac{B_M\sqrt{1-y^2}}{\xi(A_Mz+B_My)}+O\bigg(\frac{\xi\norm{M}^4}{Q^4}\bigg)\\
&=\frac{B_M\sqrt{1-y^2}}{\xi(A_M+B_My)}+O\bigg(\frac{\xi\norm{M}^4}{Q^4}+\frac{\norm{M}^6}{\xi X^2}\bigg).\nonumber
\end{align}

To finish the proof of the proposition, we investigate how the error terms in \eqref{upper-bound-on-x} and \eqref{lower-bound-on-x} affect the error term in \eqref{the-volume-integral}: By an elementary calculation, we find that
\begin{align*}
&\frac{\omega_{n-1}Q^{2(n-1)}}{2^{n-1}}\int_{-1}^1(1-y^2)^{(n-3)/2}\int_{\widetilde J_{Q,\xi}(y)}x^{n-2}\,dxdy\\
&=\frac{\omega_{n-1}Q^{2(n-1)}}{2^{n-1}}\int_{-1}^1(1-y^2)^{(n-3)/2}\int_{J_{\xi}(y)}x^{n-2}\,dxdy\\
&+O\bigg(X^{n-1}+\big(\xi^{-(n-3)}+\xi^{n-1}\big)\norm{M}^{2n}Q^{2(n-3)}+\big(1+\xi^{-(n-1)}\big)\frac{\norm{M}^{2(n+1)}Q^{2(n-1)}}{X^2}\bigg).
\end{align*}
Combing this estimate with \eqref{the-volume-integral} and \eqref{truncated volume}, we obtain
\begin{align*}
&\text{vol}\left(\mathcal R_M\left(Q,\xi\right)\right)=\frac{\omega_{n-1}Q^{2(n-1)}}{2^{n-1}}\int_{-1}^1(1-y^2)^{(n-3)/2}\int_{J_{\xi}(y)}x^{n-2}\,dxdy+\mathcal E(Q)\\
&+O\bigg(X^{n-1}+\big(\xi^{-(n-3)}+\xi^{n-1}\big)\norm{M}^{2n}Q^{2(n-3)}+\big(1+\xi^{-(n-1)}\big)\frac{\norm{M}^{2(n+1)}Q^{2(n-1)}}{X^2}\bigg).
\end{align*}
In order to balance the error terms above, we choose\footnote{Note that, by \eqref{M-remark2} and taking $Q$ large enough, this is an admissible choice of $X$ (i.e.\ $X$ satisfies \eqref{Xinterval}).}
\begin{equation}\label{X}
X=\norm{M}^{2}Q^{2\frac{n-1}{n+1}},
\end{equation} 
and arrive at
\begin{align}\label{almost-done}
&\text{vol}\left(\mathcal R_M\left(Q,\xi\right)\right)=\frac{\omega_{n-1}Q^{2(n-1)}}{2^{n-1}}\int_{-1}^1(1-y^2)^{(n-3)/2}\int_{J_{\xi}(y)}x^{n-2}\,dxdy\\
&+O\bigg(\big(\xi^{-(n-3)}+\xi^{n-1}\big)\norm{M}^{2n}Q^{2(n-3)}+\big(1+\xi^{-(n-1)}\big)\norm{M}^{2(n-1)}Q^{2\frac{(n-1)^2}{n+1}}\bigg).\nonumber
\end{align}
Finally, the desired result follows from the simple observation that, for $M$ satisfying \eqref{M-remark2}, the first error term is subsumed by the second error term in \eqref{almost-done}.
\end{proof}

In order to further investigate the main term in \eqref{firstvolume}, we introduce the sets
\begin{align*}
I_1(\xi,M)&:=\left\{y\in[-1,1] : \frac{B_M\sqrt{1-y^2}}{\xi(A_M+B_My)}<1\leq\frac1{A_M+B_My}\right\},\\
I_2(\xi,M)&:=\left\{y\in[-1,1] : \frac{B_M\sqrt{1-y^2}}{\xi(A_M+B_My)}<\frac1{A_M+B_My}\leq1\right\}.
\end{align*}
$I_1(\xi,M)$ and $I_2(\xi,M)$ are unions of intervals and we recall from \cite[Lemma 3.18]{KelmerKontorovich:2013} the following more explicit description of these sets:\footnote{Recall the definitions of $\alpha_M$ and $\lambda_{\pm,M}$ from \eqref{alpha}, \eqref{lambda} and \eqref{subindex_M}.}
\begin{align*}
 I_1(\xi,M)&:=
 \begin{cases}
[-1,\lambda_{-,M}(\xi))& \textrm {if $\xi\leq C_M$},\\
[-1,\lambda_{-,M}(\xi))\cup(\lambda_{+,M}(\xi),\frac{1-A_M}{B_M}]& \textrm {if $C_M<\xi\leq B_M$},\\
[-1,\frac{1-A_M}{B_M}]& \textrm {if $B_M< \xi$},
\end{cases}\\
 I_2(\xi,M)&:=
\begin{cases}
(\alpha_M(\xi), 1]& \textrm {if $\xi\leq C_M$},\\
[\frac{1-A_M}{B_M}, -\alpha_M(\xi))\cup(\alpha_M(\xi),1]& \textrm {if $C_M<\xi\leq B_M$},\\
[\frac{1-A_M}{B_M},1]& \textrm {if $B_M< \xi$}.
\end{cases}
\end{align*}
Note in particular that 
\begin{equation}\label{I-relation}
I(\xi,t(M))=I_1(\xi,M)\cup I_2(\xi,M) 
\end{equation}
(see \eqref{set-I}). We are now ready to finish the proof of Theorem \ref{volume-theorem}.

\begin{proof}[Proof of Theorem \ref{volume-theorem}]
We call the integral in the main term of Proposition \ref{volume-proposition} $F_M(\xi)$. Given the information about the intervals $I_1(\xi,M)$ and $I_2(\xi,M)$ above, it is immediate to note that 
\begin{align}\label{explicit-F}
F_M&(\xi)=\frac1{n-1}\int_{I_1(\xi,M)}(1-y^2)^{(n-3)/2}\bigg(1-\bigg(\frac{B_M\sqrt{1-y^2}}{\xi(A_M+B_My)}\bigg)^{n-1}\bigg)\,dy\\
+&\frac1{n-1}\int_{I_2(\xi,M)}(1-y^2)^{(n-3)/2}\bigg(\bigg(\frac1{A_M+B_My}\bigg)^{n-1}-\bigg(\frac{B_M\sqrt{1-y^2}}{\xi(A_M+B_My)}\bigg)^{n-1}\bigg)\,dy.\nonumber
\end{align} 
Furthermore, we note that $F_M(\xi)\to0$ as $\xi\to0$. Indeed, using \eqref{alpha} and \eqref{lambda} in the explicit formulas for $I_1(\xi,M)$ and $I_2(\xi,M)$, we find that both intervals are of length $O(\xi^2)$ as $\xi\to0$, from which the claim follows.

Next, we compute the derivative $F_M'(\xi)$ in each of the three regimes of $I_1(\xi,M)$ and $I_2(\xi,M)$. We first consider the case $B_M<\xi$, where the computations are easier due to the fact that none of the endpoints of $I_1(\xi,M)$ and $I_2(\xi,M)$ depend on $\xi$.  Using the explicit integral description \eqref{explicit-F} of $F_M(\xi)$,
we immediately find that
\begin{align}\label{FM-1}
F_M'(\xi)=\frac1{\xi^n}\int_{I_1(\xi,M)\cup I_2(\xi,M)}\left(1-y^2\right)^{n-2}\big(y+\tfrac{A_M}{B_M}\big)^{-(n-1)}\,dy.
\end{align}

We continue by considering the case $\xi<C_M$. Again, it follows from \eqref{explicit-F} that
\begin{align*}
&F_M'(\xi)=\left(1-\lambda_{-,M}(\xi)^2\right)^{(n-3)/2}\bigg(1-\bigg(\frac{B_M\sqrt{1-\lambda_{-,M}(\xi)^2}}{\xi(A_M+B_M\lambda_{-,M}(\xi))}\bigg)^{n-1}\bigg)\frac{\lambda_{-,M}'(\xi)}{n-1}\\
&-\left(1-\alpha_M(\xi)^2\right)^{(n-3)/2}\bigg(\bigg(\frac1{A_M+B_M\alpha_M(\xi)}\bigg)^{n-1}-\bigg(\frac{B_M\sqrt{1-\alpha_M(\xi)^2}}{\xi(A_M+B_M\alpha_M(\xi))}\bigg)^{n-1}\bigg)\frac{\alpha_M'(\xi)}{n-1}\\
&+\frac1{\xi^n}\int_{I_1(\xi,M)\cup I_2(\xi,M)}\left(1-y^2\right)^{n-2}\big(y+\tfrac{A_M}{B_M}\big)^{-(n-1)}\,dy.
\end{align*}
It is now straightforward to verify, using \eqref{lambda} and \eqref{alpha} respectively, that
\begin{align*}
\frac{B_M\sqrt{1-\lambda_{\pm,M}(\xi)^2}}{\xi(A_M+B_M\lambda_{\pm,M}(\xi))}&=1,\\
\frac{B_M\sqrt{1-\alpha_M(\xi)^2}}{\xi(A_M+B_M\alpha_M(\xi))}&=\frac1{A_M+B_M\alpha_M(\xi)}.
\end{align*}
Hence, since the first two terms in the expression for $F_M'(\xi)$ vanish, we arrive at
\begin{align}\label{FM-2}
F_M'(\xi)=\frac1{\xi^n}\int_{I_1(\xi,M)\cup I_2(\xi,M)}\left(1-y^2\right)^{n-2}\big(y+\tfrac{A_M}{B_M}\big)^{-(n-1)}\,dy.
\end{align}
Furthermore, by essentially the same argument, we find that also in the case $C_M<\xi<B_M$, we have
\begin{align}\label{FM-3}
F_M'(\xi)=\frac1{\xi^n}\int_{I_1(\xi,M)\cup I_2(\xi,M)}\left(1-y^2\right)^{n-2}\big(y+\tfrac{A_M}{B_M}\big)^{-(n-1)}\,dy.
\end{align}

Finally, it follows from \eqref{I-relation} that the right-hand sides of \eqref{FM-1}, \eqref{FM-2} and \eqref{FM-3} all give the desired expression for $F_M'(\xi)$ (i.e.\ $F_M'(\xi)=f_{\xi}(t(M))$, where $f_{\xi}$ is the function defined in \eqref{DEFOFF}). Therefore, since $F_M(\xi)$ and $f_\xi(t(M))$ are continuous functions of $\xi$, and $F_M(\xi)\to0$ as $\xi\to0$, the theorem follows from the fundamental theorem of calculus. 
\end{proof}

We end this section by pointing out that the result corresponding to Theorem \ref{volume-theorem} needed in the proof of Theorem \ref{restricted-main-theorem} can be established by essentially the same proof as Theorem \ref{volume-theorem}. Recall that $\mathcal S\subset\mathcal U$ is a spherical cap (recall also that $\mathcal U$ denotes the hyperbolic unit sphere centered at $e_{n+1}$) and that the hyperbolic cone specified by the vertex $e_{n+1}$ and the cross-section $\mathcal S$ is denoted by $\mathcal C$. We are interested in determining the volume, asymptotically as $Q\to\infty$, of the set
\begin{equation}\label{RMC}
\mathcal R_{M,\mathcal C}(Q,\xi):=\left\{g\in B_Q : \norm{gM}\leq Q, ge_{n+1},gMe_{n+1}\in \mathcal C,\v{g}{gM}<\frac{2\xi}{Q^2}\right\}
\end{equation}
for $M\in\G$, $M\notin K$. We denote the volume measure on $S^{n-1}$ by $\mu_{S^{n-1}}$ and let $\phi:S^{n-1}\to\mathcal U\subset\H^n$ denote an embedding of the Euclidean sphere $S^{n-1}$ into $H^n$ preserving all angles based at the center of the sphere.

\begin{theorem}\label{volume-theorem-2}
Let $\mathcal S\subset\mathcal U$ be a spherical cap and denote the hyperbolic cone specified by the vertex $e_{n+1}$ and the cross-section $\mathcal S$ by $\mathcal C$. Then, for every $M\in\G$, $M\notin K$, and for $\xi/Q^2$ sufficiently small, we have
\begin{multline}\label{VOLUMETHM-II}
\textup{vol}\left(\mathcal R_{M,\mathcal C}\left(Q,\xi\right)\right)=\frac{\omega_{n-1}\mu_{S^{n-1}}(\phi^{-1}(\mathcal S))Q^{2(n-1)}}{\omega_n2^{n-1}}\int_0^{\xi}f_{\zeta}\left(t(M)\right)\,d\zeta\\
+O\left(\xi Q^{2(n-2)}+g(\xi)\|M\|^{2(n-1)}Q^{2\frac{(n-1)^2}{n+1}}\right)
\end{multline}
as $Q\to\infty$, where the functions $f_{\zeta}$ and $g$ are defined by \eqref{DEFOFF} and \eqref{gdef} respectively.
\end{theorem}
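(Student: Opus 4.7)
The plan is to mirror the proof of Theorem \ref{volume-theorem} (through Proposition \ref{volume-proposition}), exploiting the fact that the cone restrictions can be decoupled from the existing constraints by placing them on the left Cartan $K$-factor. The crucial starting observation is that, since $K=\text{Stab}_G(e_{n+1})$, writing $g=k_g a_t k_g'$ with $t>0$ gives $ge_{n+1}=k_g a_t e_{n+1}$, so the geodesic direction at $e_{n+1}$ pointing toward $ge_{n+1}$ is $k_g$ applied to the fixed direction $e_1\in\R^n$ corresponding to $a_t e_{n+1}$. Hence $ge_{n+1}\in\mathcal C$ is a condition on $k_g$ alone, namely $k_g e_1\in\phi^{-1}(\mathcal S)$. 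This is compatible with the structure uncovered in the proof of Proposition \ref{volume-proposition}, where the remaining constraints $g\in B_Q$, $\norm{gM}\leq Q$, $\v{g}{gM}<2\xi/Q^2$ were shown to be independent of $k_g$ (they involve only $t$ and $k=k_g'k_M$).

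First, I would replace the awkward condition $gMe_{n+1}\in\mathcal C$ by the simpler $ge_{n+1}\in\mathcal C$, and control the resulting symmetric difference. Since $\v{g}{gM}<2\xi/Q^2$, the points $ge_{n+1}$ and $gMe_{n+1}$ lie at angular separation at most $2\xi/Q^2$ as seen from $e_{n+1}$, so the two cone conditions can disagree only when $k_g e_1$ lies in an angular $O(\xi/Q^2)$-neighborhood of $\partial \phi^{-1}(\mathcal S)$. Since $\mathcal S$ is a spherical cap, this neighborhood has $K$-Haar measure $O(\xi/Q^2)$; combining with the trivial bound $\text{vol}(B_Q)=O(Q^{2(n-1)})$ on the remaining $(t,k_g')$-integration yields a replacement error of size $O(\xi Q^{2(n-2)})$, which accounts for the first error term in \eqref{VOLUMETHM-II}.

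Next, I would apply the Cartan integration formula (Proposition \ref{KAKmeasure}) to the reduced region. Since the condition on $k_g$ decouples from the conditions on $(t,k_g')$, the integral factorizes, and the $k_g$-integration produces
\[
\int_K \chi_{\{k_g e_1\in\phi^{-1}(\mathcal S)\}}\,dk_g=\frac{\mu_{S^{n-1}}(\phi^{-1}(\mathcal S))}{\omega_n},
\]
because the pushforward of the normalized Haar measure on $K$ under $k\mapsto k\cdot e_1$ is $\mu_{S^{n-1}}/\omega_n$. The remaining $(t,k_g')$-integration is precisely the one handled in Proposition \ref{volume-proposition} and Theorem \ref{volume-theorem}, producing the main term $\frac{\omega_{n-1}Q^{2(n-1)}}{2^{n-1}}\int_0^\xi f_\zeta(t(M))\,d\zeta$ together with the error $O(g(\xi)\norm{M}^{2(n-1)}Q^{2(n-1)^2/(n+1)})$. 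Multiplying by the extra factor $\mu_{S^{n-1}}(\phi^{-1}(\mathcal S))/\omega_n$ yields the claimed main term of \eqref{VOLUMETHM-II}.

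The main obstacle is making Step 2 fully rigorous and uniform in $M$: one must verify that the bad set of $k_g$ has $K$-Haar measure $O(\xi/Q^2)$ with a constant independent of the other parameters, using only the smoothness and bounded $(n-2)$-dimensional boundary measure of the spherical cap $\mathcal S$. Once this boundary contribution is controlled, everything else is a straightforward rerun of the proof of Theorem \ref{volume-theorem}, with the bookkeeping factor $\mu_{S^{n-1}}(\phi^{-1}(\mathcal S))/\omega_n$ propagated through the computation.
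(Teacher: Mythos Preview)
Your proposal is correct and follows essentially the same approach as the paper: the paper introduces exactly your intermediate set $\mathcal S_{M,\mathcal C}(Q,\xi)$ (dropping the condition $gMe_{n+1}\in\mathcal C$), bounds the volume difference by $O(\xi Q^{2(n-2)})$, and then reruns the proof of Theorem~\ref{volume-theorem}. In fact you supply more detail than the paper does---in particular your explanation of why the $k_g$-integration decouples and produces the factor $\mu_{S^{n-1}}(\phi^{-1}(\mathcal S))/\omega_n$, and your boundary-neighborhood argument for the $O(\xi Q^{2(n-2)})$ term, are exactly what is implicit in the paper's two-sentence proof.
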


\begin{proof}
To prove \eqref{VOLUMETHM-II}, we first replace $ \mathcal R_{M,\mathcal C}(Q,\xi)$ by the more tractable set
\begin{equation*}
\mathcal S_{M,\mathcal C}(Q,\xi):=\left\{g\in B_Q : \norm{gM}\leq Q, ge_{n+1}\in\mathcal C,\v{g}{gM}<\frac{2\xi}{Q^2}\right\},
\end{equation*}
and notice that 
\begin{equation*}
\textup{vol}\left(\mathcal S_{M,\mathcal C}\left(Q,\xi\right)\right)-\textup{vol}\left(\mathcal R_{M,\mathcal C}\left(Q,\xi\right)\right)=O\big(\xi Q^{2(n-2)}\big).
\end{equation*} 
Finally, we determine an asymptotic formula for $\textup{vol}\left(\mathcal S_{M,\mathcal C}\left(Q,\xi\right)\right)$ in the same way as we found the formula for $\textup{vol}\left(\mathcal R_M\left(Q,\xi\right)\right)$ in the proof of Theorem \ref{volume-theorem}.
\end{proof}

\section{Approximating counts by volumes}\label{counts-to-vols}

We are now in a position where we can relate the counting of terms in the
sum
\eqref{basic-counting} to the volumes $\vol{\mathcal R_M(Q,\xi)}$ which
have been calculated in Theorem \ref{volume-theorem} . Let $X=X(M,Q)$ be a truncation parameter
satisfying
\begin{equation}\label{truncation-parameter}
  1<X<\frac{Q}{20\norm{M}}.
\end{equation}
We define
\begin{equation*}
  \mathcal R_M(Q,\xi,X):=\{g\in\mathcal R_M(Q,\xi): \norm{g}>Q/X\},
\end{equation*}
and observe that 
\begin{equation}\label{size-complement}\vol{\mathcal R_M(Q,\xi)\setminus\mathcal
  R_M(Q,\xi,X)}=O\left(\frac{Q^{2(n-1)}}{X^{2(n-1)}}\right),
\end{equation} 
since this complement is contained in $\{g\in G:\norm{g}\leq Q/X\}$.

\subsection{Fattening and slimming} Recall
the definitions of  $B_{\delta_1}$ and $D_\delta$ from \eqref{Bdelta} and \eqref{Ddelta} and note that these sets are
invariant under inversion. We consider the fattening and
slimming of $R_M(Q,\xi,X)$ by $B_{\delta_1}\times D_\delta$. More
generally: For any sets
$S,C_1,C_2\subset G$, we define the $C_1\times C_2$-fattening
$S^+$ of $S$, and the $C_1\times C_2$-slimming $S^-$ of $S$, by
\begin{align*}
 S^+:&=\bigcup_{(h_1,h_2)\in C_1\times C_2}h_1\cdot
 S\cdot h_2\\
 S^-:&=\bigcap_{(h_1,h_2)\in C_1\times
   C_2} h_1^{-1}\cdot S\cdot h_2^{-1}.
\end{align*}
It is easy to see that \begin{equation}\label{smallobs1}S^-\subseteq
  (S^-)^+\subseteq S\subseteq
(S^+)^- \subseteq S^+,\end{equation} and that if $A\subseteq B$, then 
\begin{equation}\label{smallobs2}
 A^+\subseteq B^+,\quad A^-\subseteq B^-.
\end{equation}
The next two lemmas verify that, for small values of the parameter $0<\delta=\delta(M,Q,X)$, the $B_{\delta_1}\times D_\delta$-fattening (respectively $B_{\delta_1}\times D_\delta$-slimming) of  $R_M(Q, \xi, X)$ doesn't grow (or shrink) too drastically. 

\begin{lemma} \label{fattening-slimming-control} 
Let $\xi_0>0$. 
For $\delta$ and $\delta\norm{M}^2$ sufficiently
small, there exists a constant $c>0$, such that for any $\xi\geq
\xi_0$ we have 
\begin{align*}
    R_M^+(Q,\xi,X)\subseteq R_M(\delta_2Q,\delta_3\xi,2X),\\ 
    R_M(Q,\xi,X)\subseteq R_M^-(\delta_2Q, \delta_3\xi, 2X),
\end{align*}
where
\begin{equation*}
   \quad \delta_2:=1+c\delta\norm{M}^2,\quad
    \delta_3:=1+7c\delta X^2\norm{M}^4.
\end{equation*}
\end{lemma}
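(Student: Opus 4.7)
The plan is to note first that, because $B_{\delta_1}$ and $D_\delta$ are both closed under inversion, the two claimed inclusions are formally equivalent: both reduce to the assertion that for every $g_0\in R_M(Q,\xi,X)$ and every $(h_1,h_2)\in B_{\delta_1}\times D_\delta$, the element $g:=h_1g_0h_2$ lies in $R_M(\delta_2 Q,\delta_3\xi,2X)$. I will then verify the four defining inequalities of $R_M(\delta_2 Q,\delta_3\xi,2X)$ for such a $g$ by applying the stability results of Section \ref{stability}. Note that $\|g_0\|>Q/X>20\|M\|$ by \eqref{truncation-parameter}, so the hypotheses of Lemma \ref{stability-left-right} are satisfied; in particular $\|g_0\|>3$ for $Q$ large, so Lemma \ref{first-bounds} and Remark \ref{inversions-too} apply.

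Since $D_\delta\subset B_{\delta_1}$, two applications of Lemma \ref{first-bounds}(i) combined with Remark \ref{inversions-too} give $\|g\|=\|g_0\|(1+O(\delta))$, which is at most $\delta_2 Q$ once $c$ in $\delta_2=1+c\delta\|M\|^2$ is taken large enough (using $\|M\|^2\geq 2$). For the lower bound $\|g\|>\delta_2 Q/(2X)$, the same estimate yields $\|g\|>(Q/X)(1-O(\delta))$, and $(1-O(\delta))>\delta_2/2$ holds whenever $\delta\|M\|^2$ is sufficiently small. For $\|gM\|\leq\delta_2 Q$, Lemma \ref{stability-left-right}(i) gives
$$\|gM\|^2=\|g_0M\|^2+O\bigl(\delta\|g_0\|^2\|M\|^2\bigr)\leq Q^2\bigl(1+O(\delta\|M\|^2)\bigr),$$
and taking square roots delivers the desired bound.

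The main substantive step is the angle condition $v(g,gM)<2\delta_3\xi/(\delta_2 Q)^2$. Lemma \ref{stability-left-right}(ii), combined with $\|g_0\|>Q/X$, gives
$$\tan v(g,gM)=\tan v(g_0,g_0M)+O\bigl(\delta X^2\|M\|^4/Q^2\bigr)<\tan(2\xi/Q^2)+O\bigl(\delta X^2\|M\|^4/Q^2\bigr).$$
For $Q$ large, $2\xi/Q^2$ is small enough that $\tan$ is well approximated by its argument; after linearizing both sides and clearing the common factor $2/Q^2$ the target inequality reduces to
$$\delta_3>\delta_2^2\bigl(1+O(\delta X^2\|M\|^4/\xi)\bigr).$$
The hypothesis $\xi\geq\xi_0$ allows us to bound $1/\xi$ by $1/\xi_0$, absorbing the $\xi_0$-dependence into $c$; since $\delta_2^2=1+O(\delta\|M\|^2)\leq 1+O(\delta X^2\|M\|^4)$, the choice $\delta_3=1+7c\delta X^2\|M\|^4$ is sufficient provided $c$ is taken large enough.

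The main obstacle is essentially bookkeeping: one must track the additive and multiplicative errors produced by Lemma \ref{stability-left-right} and compare them cleanly with the multiplicative perturbations encoded in $\delta_2$ and $\delta_3$. The lower bound $\xi\geq\xi_0$ is essential for converting the additive angular error $O(\delta X^2\|M\|^4/Q^2)$ into a multiplicative perturbation of $\xi/Q^2$ of the form $1+O(\delta X^2\|M\|^4)$; without it the right-hand expression for $\delta_3$ would have to carry an inverse power of $\xi$. This is the reason the constant $c$ in the statement is allowed to depend on $\xi_0$.
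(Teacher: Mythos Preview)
Your proposal is correct and follows essentially the same approach as the paper's proof: reduce to the first inclusion via the inversion-symmetry of $B_{\delta_1}$ and $D_\delta$ (the paper cites \eqref{smallobs1}--\eqref{smallobs2} for this), then verify the four defining conditions of $R_M(\delta_2Q,\delta_3\xi,2X)$ using Lemma~\ref{first-bounds}, Remark~\ref{inversions-too}, and Lemma~\ref{stability-left-right}, with the angle step handled by converting the additive error $O(\delta X^2\norm{M}^4/Q^2)$ into a multiplicative one using $\xi\geq\xi_0$. The only cosmetic difference is that the paper passes from $\tan$ to the angle via the $1$-Lipschitz property of $\arctan$, whereas you linearize $\tan$ for small arguments; both are equivalent here since the angles are $O(\xi/Q^2)$.
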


\begin{proof}
 We start by noticing that, using \eqref{smallobs1} and
 \eqref{smallobs2}, any of the two inclusions implies the other. To
 prove the first inclusion, we let  $g_1=h_1gh_2$ with $g\in R_M(Q,\xi,X)$ and $(h_1,h_2)\in
 B_{\delta_1}\times D_\delta$. Note that
 since $\norm{g}>Q/X$, we have $\norm{g}>20\norm{M}$ by
 assumption \eqref{truncation-parameter}, so we are free to apply Lemma \ref{first-bounds} and Lemma
 \ref{stability-left-right}.
 
First, we observe that, by Lemma \ref{first-bounds}\eqref{bound-one} and Remark \ref{inversions-too} (recall also that $D_\delta\subseteq B_{\delta_1}$), there exist an
absolut constant $c_1>0$ such that 
\begin{equation}
  \label{first-requirement}
  \norm{g_1}^2< \norm{g}^2(1+c_1\norm{M}^2\delta)^2\leq Q^2(1+c_1\norm{M}^2\delta)^2.
\end{equation}
We observe in a similar way, using 
Lemma \ref{stability-left-right}\eqref{uno}, that
\begin{equation}
  \label{second-requirement}
  \norm{g_1M}^2<
  Q^2(1+c_2\norm{M}^2\delta)^2
\end{equation}
for another absolute constant $c_2>0$.

Next, we use Lemma \ref{stability-left-right}\eqref{dos}, basic properties of $\arctan$, and $\norm{g}>Q/X$ to see that 
\begin{align}\label{vinequality}
\v{g_1}{g_1M}&< \v{g}{gM}+c_3\delta\norm{M}^4/\norm{g}^2<
Q^{-2}(2\xi+c_3\delta X^2\norm{M}^4).
\end{align}
Letting $c=\max(c_1,c_2,(2\xi_0)^{-1}c_3)$, we observe that for
$\delta\norm{M}^2$ sufficiently small, we have  
$\delta_2^2\leq 1+3c\delta\norm{M}^2$ and
\begin{align*}
(1+c\delta X^2\norm{M}^4)\delta_2^2\leq 1+7c\delta
  X^2\norm{M}^4,
\end{align*}
which, together with \eqref{vinequality}, implies
\begin{equation}
  \label{third-requirement}
  \v{g_1}{g_1M}<\frac{2\xi\delta_3}{(Q\delta_2)^2}.
\end{equation}
We now observe that the inequalities
\eqref{first-requirement}, \eqref{second-requirement} and \eqref{third-requirement}
show that $g_1\in R_M(\delta_2Q,\delta_3\xi)$.

To prove that $g_1\in R_M(\delta_2Q,\delta_3\xi,2X)$, we note that by Lemma \ref{first-bounds}\eqref{bound-one}, Remark \ref{inversions-too} and the above choice of $c$, we have 
 \begin{equation*}
   \norm{g_1}^2>
   \frac{(1-c\delta)^2Q^2}{X^2}>
   \frac{(\delta_2Q)^2}{(2X)^2},
 \end{equation*}
where the last inequality holds for $\delta$ and $\delta\norm{M}^2$ sufficiently
small. This finishes the proof.
\end{proof}

  \begin{remark}
    Concerning the numbers $\delta_2, \delta_3$ in Lemma
    \ref{fattening-slimming-control}: we have $1\leq\delta_2=O(1)$, whereas
    a priori we only have
    $1\leq\delta_3=O(X^2\norm{M}^2)$. 
  \end{remark}
  
\begin{lemma}\label{volume-comparison} 
Let $\epsilon>0$ and fix $\xi> 0$. For $\norm{M}\geq m_0>1$, 
$\delta\norm{M}^2$ sufficiently small, and $\delta_3=1+7c\delta X^2\norm{M}^4$ bounded, we have 
\begin{align*}
&\vol{R_M(Q\delta_2^{\pm 1},\xi\delta_3^{\pm 1})}
=\vol{R_M(Q,\xi)}\\
&\quad +O_{\xi,\epsilon}\Big(Q^{2(n-1)}\delta X^2\norm{M}^{4-n+\epsilon}+\norm{M}^{2(n-1)}Q^{2\frac{(n-1)^2}{n+1}}\Big).
\end{align*}
\end{lemma}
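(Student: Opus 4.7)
The strategy is to apply Theorem~\ref{volume-theorem} to both volumes on the left-hand side and compare the outputs. Writing
\begin{equation*}
F_M(\xi):=\int_0^\xi f_\zeta(t(M))\,d\zeta,
\end{equation*}
and recalling that $\delta_2^{\pm 1}$ and $\delta_3^{\pm 1}$ are bounded by hypothesis (and that $g(\xi)$ is a fixed constant since $\xi$ is fixed), the two error terms produced by Theorem~\ref{volume-theorem} together contribute $O_\xi(\norm{M}^{2(n-1)}Q^{2(n-1)^2/(n+1)})$, which is absorbed by the second error term in the lemma. What remains is to estimate the main-term difference
\begin{equation*}
\frac{\omega_{n-1}}{2^{n-1}}\Bigl[(Q\delta_2^{\pm 1})^{2(n-1)}F_M(\xi\delta_3^{\pm 1})-Q^{2(n-1)}F_M(\xi)\Bigr].
\end{equation*}

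A product-rule decomposition rewrites the bracket as
\begin{equation*}
Q^{2(n-1)}\Bigl[(\delta_2^{\pm 2(n-1)}-1)\,F_M(\xi\delta_3^{\pm 1})+\bigl(F_M(\xi\delta_3^{\pm 1})-F_M(\xi)\bigr)\Bigr].
\end{equation*}
Since $\delta_2=1+c\delta\norm{M}^2$ with $\delta\norm{M}^2$ small, a Taylor expansion yields $\delta_2^{\pm 2(n-1)}-1=O(\delta\norm{M}^2)$. For the second summand, the mean value theorem provides some $\zeta^{\ast}$ between $\xi$ and $\xi\delta_3^{\pm 1}$ with
\begin{equation*}
F_M(\xi\delta_3^{\pm 1})-F_M(\xi)=\xi(\delta_3^{\pm 1}-1)\,f_{\zeta^{\ast}}(t(M)),
\end{equation*}
where $|\xi(\delta_3^{\pm 1}-1)|=O(\xi\delta X^2\norm{M}^4)$.

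The proof then reduces to pointwise bounds of the form
\begin{equation*}
f_\zeta(t(M))=O_{\xi,\epsilon}\bigl(\norm{M}^{-n+\epsilon}\bigr)\quad\text{and}\quad F_M(\xi)=O_{\xi,\epsilon}\bigl(\norm{M}^{2-n+\epsilon}\bigr),
\end{equation*}
uniformly for $\zeta$ in a bounded neighborhood of $\xi$ and for $\norm{M}\geq m_0$. Granting these, the first summand contributes $O_{\xi,\epsilon}(Q^{2(n-1)}\delta\norm{M}^{4-n+\epsilon})$ and the second summand contributes $O_{\xi,\epsilon}(Q^{2(n-1)}\delta X^2\norm{M}^{4-n+\epsilon})$; since $X\geq 1$, both are subsumed by the first error term in the lemma.

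The main obstacle is establishing the above decay rates in $\norm{M}$. These will follow from an analysis of the explicit integral \eqref{DEFOFF}: since $\cosh t(M)=\norm{M}^2/2$, we have $\coth t(M)-1=O(\norm{M}^{-4})$ as $\norm{M}\to\infty$, so the potential singularity of $(y+\coth t(M))^{-(n-1)}$ at $y=-1$ is confined to a small neighborhood of $-1$; simultaneously, for $\zeta<B_M$ the domain $I(\zeta,t(M))$ concentrates near $\pm 1$ on the scale $\zeta^2/B_M^2$, where the weight $(1-y^2)^{n-2}$ vanishes. Making a substitution $y=-1+v$ and treating the scales $v\lesssim\coth t(M)-1$ and $v\gtrsim\coth t(M)-1$ separately should yield the required decay (and in fact considerably more than needed), with the necessary properties of $f_\xi$ being collected in Appendix~\ref{appendix}.
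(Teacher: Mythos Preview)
Your proposal is correct and follows essentially the same approach as the paper's proof: apply Theorem~\ref{volume-theorem} to both volumes, decompose the main-term difference into a factor controlled by $\delta_2^{\pm 2(n-1)}-1=O(\delta\norm{M}^2)$ and one controlled by $\delta_3^{\pm 1}-1=O(\delta X^2\norm{M}^4)$, and then invoke the bound $f_\zeta(t(M))=O_\epsilon(\norm{M}^{-n+\epsilon})$ from Lemma~\ref{stuff}\eqref{morestuff} (equivalently Remark~\ref{remarkaboutstuff}). The only cosmetic differences are that the paper groups the decomposition slightly differently (with $F_M(\xi)$ rather than $F_M(\xi\delta_3^{\pm1})$ in the first summand) and bounds the integral $\int_\xi^{\delta_3\xi}f_\zeta(t(M))\,d\zeta$ directly rather than via the mean value theorem; your last paragraph sketching how to obtain the decay in $\norm{M}$ can simply be replaced by a citation of Lemma~\ref{stuff}\eqref{morestuff}.
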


\begin{proof}

Clearly $\xi/Q^2$, $\xi\delta_3/(Q\delta_2)^2$ all
become sufficiently small for $Q$ sufficiently large so we may apply
Theorem~\ref{volume-theorem} to see that
up to an error of 
\begin{equation*}
O\Big((g(\xi)+g(\delta_3\xi))\norm{M}^{2(n-1)}Q^{2\frac{(n-1)^2}{n+1}}\Big),
\end{equation*}
the difference 
$\vol{R_M(Q\delta_2,\xi\delta_3)} -\vol{R_M(Q,\xi) }$ is
\begin{equation}\label{beforebound}
O\left(Q^{2(n-1)}\left((\delta_2^{2(n-1)}-1)\int_0^\xi{f_\zeta(t(M))}d\zeta+\delta_2^{2(n-1)}\int_\xi^{
      \delta_3\xi}f_\zeta(t(M))d\zeta\right)\right).
\end{equation}
Bounding the integrals using Lemma~\ref{stuff}\eqref{morestuff},
we see that \eqref{beforebound} is
\begin{align*}
&O_{\epsilon}\left(Q^{2(n-1)}\left((\delta_2-1)\xi\norm{M}^{-n+\epsilon}+\xi(\delta_3-1)\norm{M}^{-n+\epsilon}\right)\right)\\
&\qquad=O_{\epsilon}\left(\xi Q^{2(n-1)}\left(\delta\norm{M}^{2-n+\epsilon}+\delta
X^2\norm{M}^{4-n+\epsilon}\right)\right)\\
&\qquad=O_{\epsilon}\left(\xi Q^{2(n-1)}\delta X^2\norm{M}^{4-n+\epsilon}\right).
\end{align*}
Substituting $Q/\delta_2$ for $Q$ and $\xi/\delta_3$ for $\xi$ in the
above (notice that this is allowed since 
$(\xi/\delta_3)/(Q/\delta_2)^2$ becomes small when $Q$ grows sufficiently large), we find
that, up to an error of 
\begin{equation*}
O\Big((g(\xi/\delta_3)+g(\xi))\norm{M}^{2(n-1)}Q^{2\frac{(n-1)^2}{n+1}}\Big),
\end{equation*}
the difference 
\begin{equation*}\vol{R_M(Q,\xi)} -\vol{R_M(Q/\delta_2,\xi/\delta_3) }\end{equation*} is also 
bounded by $O_{\epsilon}(\xi Q^{2(n-1)}\delta X^2\norm{M}^{4-n+\epsilon})$.
The result now follows easily using that
\begin{equation*}
  g(\xi)+g(\delta_3^{\pm 1}\xi)=O_\xi(\delta_3^{n-1}),
\end{equation*}
which by assumption equals $O_{\xi}(1)$.
\end{proof}

\subsection{Test functions}\label{testfunctionsection}

In this short section, we introduce two functions $\Psi_1$ and $\Psi_2$ on $\G\backslash G$ that will be of fundamental importance when we relate counts to volumes in Section \ref{countstovolumes}. However, we begin by determining the asymptotic order of decay of the volumes of the sets $B_{\delta_1}$ and $D_{\delta}$ as $\delta\to0$. Using Proposition \ref{KAKmeasure}, we immediately find that 
\begin{equation}\label{Bdeltavolume}
\vol{B_{\delta_1}}=\omega_n\int_0^{\delta}(\sinh t )^{n-1}\,dt\asymp\delta^n 
\end{equation}
for all sufficiently small $\delta$. Moreover, with a little more effort, we can also establish the following lemma:

\begin{lemma}\label{Dvolumelemma}
We have $\vol{D_{\delta}}\asymp\delta^{n^2}$ for all sufficiently small $\delta$.
\end{lemma}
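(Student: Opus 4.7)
The plan is to compute $\vol{D_\delta}$ by applying the KAK integration formula of Proposition \ref{KAKmeasure}. I would first reduce to the case $A_\delta^+ := A_\delta \cap A^+$: since for $n\geq 2$ there exists $w\in K$ with $a_{-t}=wa_tw^{-1}$, one sees $\vol{K_\delta A_\delta^- K_\delta}$ is of the same asymptotic order as $\vol{K_\delta A_\delta^+ K_\delta}$, so it suffices to estimate the latter.

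Applying the integration formula yields
\begin{equation*}
\vol{K_\delta A_\delta^+ K_\delta}=\omega_n\int_0^{\delta}(\sinh t)^{n-1}V(t)\,dt,
\end{equation*}
where $V(t)$ is the measure of the set of $(k_1,k_2)\in K\times K$ for which $k_1a_tk_2\in K_\delta A_\delta^+ K_\delta$. Because the Cartan parameter $t(k_1a_tk_2)$ is uniquely determined and equals $t$, I would invoke the standard $M$-ambiguity of the Cartan decomposition to conclude that this condition is equivalent to the existence of some $m\in M$ with $k_1m^{-1}\in K_\delta$ and $mk_2\in K_\delta$. In particular $V(t)$ is independent of $t$; denote its common value by $V$.

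To compute $V$ I would apply Fubini to the parametrization $(h_1,m,h_2)\mapsto(h_1m,m^{-1}h_2)$ from $K_\delta\times M\times K_\delta$ onto its image $E\subset K\times K$. By bi-invariance of the Haar measure on $K$,
\begin{equation*}
\vol{K_\delta}^2\vol{M}=\int_{K\times K}\mu(k_1,k_2)\,dk_1dk_2,
\end{equation*}
where $\mu(k_1,k_2)$ is the Haar measure of the fiber $\{m\in M:k_1m^{-1},mk_2\in K_\delta\}$. A short calculation, exploiting that $K_\delta$ is invariant under $K$-conjugation (since the operator norm on $\R^{n+1}$ is $K$-invariant), then shows that $\mu(k_1,k_2)\asymp\vol{M\cap K_{C\delta}}$ uniformly on its support for some absolute constant $C>0$. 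Substituting this back yields a closed-form asymptotic for $V$ in terms of $\vol{K_\delta}$, $\vol{M}$, and $\vol{M\cap K_{C\delta}}$, and combining with $\int_0^\delta(\sinh t)^{n-1}\,dt\asymp\delta^n$ produces the claimed asymptotic.

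The hardest step will be the uniform lower bound on $\mu(k_1,k_2)$: one must verify that the fiber is never accidentally small on the support of the indicator, which requires understanding how $M$-cosets intersect the metric ball $K_\delta$ and checking that every such non-empty intersection has the same order as $M\cap K_{C\delta}$. Once this uniform estimate is in place, both the upper and lower bounds in the asymptotic $\asymp\delta^{n^2}$ follow from the same Fubini identity.
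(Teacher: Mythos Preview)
Your route differs substantially from the paper's. The paper's argument is very short: it asserts directly from Proposition~\ref{KAKmeasure} that
\[
\vol{D_\delta}=\omega_n\vol{K_\delta}^{2}\int_0^{\delta}(\sinh t)^{n-1}\,dt,
\]
and then proves $\vol{K_\delta}\asymp\delta^{n(n-1)/2}$ by comparing $K_\delta$ with Euclidean balls in $\mathrm{Lie}(K)$ via the exponential map. No analysis of the $M$-ambiguity in the Cartan decomposition appears.

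You are right to flag the $M$-ambiguity, and your Fubini set-up is sound. But there is a real gap in your conclusion. Granting both your Fubini identity and the fibre estimate $\mu\asymp\vol{M\cap K_{C\delta}}$, one obtains
\[
V\asymp\frac{\vol{K_\delta}^{2}\vol{M}}{\vol{M\cap K_{C\delta}}}\asymp\delta^{\,n(n-1)-(n-1)(n-2)/2},
\]
and hence $\vol{D_\delta}\asymp\delta^{(n^{2}+3n-2)/2}$. This equals $\delta^{n^{2}}$ only when $(n-1)(n-2)=0$, i.e.\ for $n=2$, where $M$ is trivial and the whole issue evaporates. For $n\ge3$ your computation does \emph{not} reproduce the exponent $n^{2}$, so the assertion that it ``produces the claimed asymptotic'' is unjustified; your more careful accounting and the paper's displayed equality give genuinely different answers, and you should decide which one is correct before proceeding. (Fortunately the lemma is only ever used to produce \emph{some} negative power of $\delta$ in \eqref{PsiL2norm}--\eqref{PsiSobolevnorm}, so the precise exponent does not affect the main theorems.) A secondary point: the uniform lower bound $\mu(k_1,k_2)\gtrsim\vol{M\cap K_{c\delta}}$ that you single out as ``hardest'' genuinely fails near $\partial E$, where the fibre can have arbitrarily small measure; the matching upper bound on $V$ is more robustly obtained by covering $M$ with $\asymp\delta^{-\dim M}$ balls of radius $\delta$ and bounding the contribution of each separately.
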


\begin{proof}
Using Proposition \ref{KAKmeasure}, we obtain
\begin{align}\label{Ddeltavolume}
\vol{D_{\delta}}=\omega_n\left(\int_{K_{\delta}}dk\right)^2\int_0^{\delta}(\sinh t)^{n-1}\,dt\asymp\vol{K_{\delta}}^2\delta^n
\end{align}
for all small enough $\delta$. It remains to determine the asymptotic order of decay of $\vol{K_{\delta}}$. Recalling the definition of $K_{\delta}$ in \eqref{Kdelta}, we find that there exists a constant $C>1$ such that, for all sufficiently small $\delta$, the preimage of $K_{\delta}$ (in the Lie algebra $\text{Lie}(K)$ of $K$) under the exponential map satisfies
\begin{equation*}
\mathcal B_{\delta/C}\subset\exp^{-1}(K_{\delta})\subset\mathcal B_{C\delta},
\end{equation*} 
where $\mathcal B_{\epsilon}$ denotes the Euclidean ball of radius
$\epsilon$ centered at the origin in $\text{Lie}(K)$. Using this fact,
together with \cite[Thm.\ 1.14]{Helgason:2000a} and possibly shrinking
the size of the admissible set of parameters $\delta$, we obtain 
\begin{equation}\label{Kdeltavolume}
\vol{K_{\delta}}\asymp\delta^{n(n-1)/2},
\end{equation}
where we have used that $\dim(\text{Lie}(K))=n(n-1)/2$. Finally, combining \eqref{Ddeltavolume} and \eqref{Kdeltavolume}, we arrive at the desired result.
\end{proof}

We now let $\delta>0$ be small enough to guarantee that the asymptotics in \eqref{Bdeltavolume} and Lemma \ref{Dvolumelemma} are valid. We introduce a smooth and non-negative test function $\psi_1$ satisfying $\psi_1(k_1gk_2)=\psi_1(g)$ (i.e.\ $\psi_1$ is spherically symmetric) and
\begin{equation}\label{phi1}
\hbox{supp}\, \psi_1\subset B_{\delta_1}, \qquad \int_G\psi_1(g)\,dg=1.
\end{equation}
Furthermore, we introduce a smooth and non-negative test function $\psi_2$ satisfying
\begin{equation}\label{phi2}
\hbox{supp}\, \psi_2\subset D_{\delta}, \qquad \int_G\psi_2(g)\,dg=1. 
\end{equation}
We can, as usual, use the test functions $\psi_i$ ($i=1,2$) to construct $\G$-automorphic functions
\begin{equation}\label{automorphictestfunctions}
\Psi_i(g):=\sum_{\g\in \G}\psi_i(\g g)
\end{equation} 
in $L^2(\G\backslash G)$ satisfying $\Psi_i(\g g)=\Psi_i(g)$ for all $\g\in \G$. It is well-known that we can choose the test functions $\psi_1$ and $\psi_2$ in such a way that $\Psi_1$ and $\Psi_2$ also satisfy
\begin{equation}\label{PsiL2norm}
\|\Psi_1\|_{L^2}\asymp\vol{B_{\delta_1}}^{-1/2}\asymp\delta^{-n/2}, \quad 
\|\Psi_2\|_{L^2}\asymp\vol{D_{\delta}}^{-1/2}\asymp\delta^{-n^2/2}
\end{equation}
and
\begin{equation}\label{PsiSobolevnorm}
\mathcal{S}_l(\Psi_1)\asymp\delta^{-l}\vol{B_{\delta_1}}^{-1/2}\asymp\delta^{-l-n/2}, \quad 
\mathcal{S}_l(\Psi_2)\asymp\delta^{-l}\vol{D_{\delta}}^{-1/2}\asymp\delta^{-l-n^2/2}
\end{equation}
for any $l\in \N$ (recall \eqref{Sobolevdefinition}, \eqref{Bdeltavolume} and Lemma \ref{Dvolumelemma}). From now on we fix such an admissible pair of test functions $\psi_1$ and $\psi_2$. The asymptotics in \eqref{PsiL2norm} and \eqref{PsiSobolevnorm}, together with Theorem \ref{main-bound} and Remark \ref{Kfiniteremark}, imply the following two corollaries: 

\begin{corollary}\label{dirty-estimate-1}
Let $s_0$ be as in Theorem \ref{main-bound} and let $\Psi_1$ be defined by \eqref{automorphictestfunctions} with our fixed test function $\psi_1$. Then 
\begin{equation*}
\langle\pi(g)\Psi_1,\Psi_1\rangle_{\G\backslash G}=\frac{1}{\vol{\G\backslash G}}+O\big(\norm{g}^{2(s_0-n+1)}\delta^{-n}\big)
\end{equation*} 
for all sufficiently small $\delta>0$.
\end{corollary}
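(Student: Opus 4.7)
The plan is to apply Theorem~\ref{main-bound} directly with $\Phi_1 = \Phi_2 = \Psi_1$, and then massage the main term and error term into the claimed form.

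First I would verify that $\Psi_1$ is $K$-invariant. Since $\psi_1$ is spherically symmetric (i.e.\ $\psi_1(k_1 g k_2) = \psi_1(g)$), we have
\begin{equation*}
\Psi_1(gk) = \sum_{\gamma\in\Gamma}\psi_1(\gamma g k) = \sum_{\gamma\in\Gamma}\psi_1(\gamma g) = \Psi_1(g),
\end{equation*}
so by Remark~\ref{Kfiniteremark} we may replace the Sobolev norms $\mathcal{S}_l(\Psi_1)$ in \eqref{matrixcoefficientsestimatewithsobolevnorms} by $\|\Psi_1\|_{L^2}$. Next, I would compute the main term. Standard unfolding gives
\begin{equation*}
\langle \Psi_1, 1\rangle_{\G\backslash G} = \int_{\G\backslash G}\sum_{\gamma\in\Gamma}\psi_1(\gamma g)\,dg = \int_G \psi_1(g)\,dg = 1
\end{equation*}
by the normalization in \eqref{phi1}, and similarly $\langle 1, \Psi_1\rangle_{\G\backslash G} = 1$. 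Hence the main term in Theorem~\ref{main-bound} is exactly $1/\vol{\G\backslash G}$.

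For the error term I would translate $e^{(s_0-n+1)t(g)}$ into the norm $\|g\|$. Since $\|g\|^2 = 2\cosh t(g) \leq 2e^{t(g)}$, we have $e^{t(g)} \geq \tfrac{1}{2}\|g\|^2$, and because the exponent $s_0-n+1$ is negative (as $s_0 < n-1$), raising to this power reverses the inequality and yields
\begin{equation*}
e^{(s_0-n+1)t(g)} \leq 2^{n-1-s_0}\|g\|^{2(s_0-n+1)} = O\bigl(\|g\|^{2(s_0-n+1)}\bigr).
\end{equation*}
Combined with the bound $\|\Psi_1\|_{L^2}^2 \asymp \delta^{-n}$ from \eqref{PsiL2norm}, the error term becomes $O\bigl(\|g\|^{2(s_0-n+1)}\delta^{-n}\bigr)$, which is exactly what is claimed.

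There is no real obstacle here; the corollary is a bookkeeping consequence of Theorem~\ref{main-bound}, Remark~\ref{Kfiniteremark}, the normalization of $\psi_1$, and the volume asymptotics for $B_{\delta_1}$ recorded in \eqref{Bdeltavolume}. The only thing requiring a moment's care is checking the sign of $s_0-n+1$ to correctly convert between the hyperbolic distance bound and the norm bound, and making sure the spherical symmetry of $\psi_1$ actually yields right-$K$-invariance of $\Psi_1$ so that Remark~\ref{Kfiniteremark} applies and we avoid an unwanted extra factor of $\delta^{-l}$ in the error.
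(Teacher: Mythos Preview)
Your proposal is correct and follows exactly the route the paper indicates: apply Theorem~\ref{main-bound}, use Remark~\ref{Kfiniteremark} to replace the Sobolev norms by $L^2$-norms via the $K$-invariance of $\Psi_1$, unfold to get $\langle\Psi_1,1\rangle_{\G\backslash G}=1$, and then insert the asymptotic $\|\Psi_1\|_{L^2}^2\asymp\delta^{-n}$ from \eqref{PsiL2norm} together with $e^{(s_0-n+1)t(g)}\ll\norm{g}^{2(s_0-n+1)}$. The paper itself gives no explicit proof beyond citing these ingredients, so your write-up simply fills in the details.
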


\begin{corollary}\label{dirty-estimate}
Let $s_0$ be as in Theorem \ref{main-bound} and let $\Psi_1$ and $\Psi_2$ be defined by \eqref{automorphictestfunctions} with our fixed test functions $\psi_1$ and $\psi_2$. Then there exists an integer $c_n>\frac{n(n+1)}{2}$ such that
\begin{equation*}
\langle\pi(g)\Psi_1,\Psi_2\rangle_{\G\backslash G}=\frac{1}{\vol{\G\backslash G}}+O\big(\norm{g}^{2(s_0-n+1)}\delta^{-c_n}\big)
\end{equation*}
for all sufficiently small $\delta>0$.
\end{corollary}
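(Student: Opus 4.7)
The plan is a direct application of Theorem \ref{main-bound} to the pair $(\Psi_1,\Psi_2)$, combined with the Sobolev bounds established in \eqref{PsiSobolevnorm}. The only real content is bookkeeping: identify the main term as $1/\vol{\G\backslash G}$, and package the decay factor $e^{(s_0-n+1)t(g)}$ in terms of $\norm{g}$.

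First I would compute the inner products of $\Psi_1,\Psi_2$ against the constant function $1$ by unfolding. Since $\Psi_i$ is the $\G$-periodization of $\psi_i$ defined in \eqref{automorphictestfunctions}, a standard unfolding argument combined with the normalization conditions \eqref{phi1} and \eqref{phi2} gives
\begin{equation*}
\langle \Psi_i, 1\rangle_{\G\backslash G}=\int_{\G\backslash G}\sum_{\g\in\G}\psi_i(\g g)\,dg=\int_G\psi_i(g)\,dg=1
\end{equation*}
for $i=1,2$. Hence the main term provided by Theorem \ref{main-bound} is precisely $1/\vol{\G\backslash G}$, as claimed.

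Next I would convert the exponential decay factor into polynomial decay in $\norm{g}$. The identity $\norm{g}^2=2\cosh t(g)\le 2e^{t(g)}$ yields $e^{t(g)}\geq \tfrac{1}{2}\norm{g}^2$; since $s_0-n+1<0$, raising to the $(s_0-n+1)$-th power reverses the inequality and gives
\begin{equation*}
e^{(s_0-n+1)t(g)}\ll \norm{g}^{2(s_0-n+1)}.
\end{equation*}
Combining this with the Sobolev estimates $\mathcal{S}_l(\Psi_1)\asymp \delta^{-l-n/2}$ and $\mathcal{S}_l(\Psi_2)\asymp \delta^{-l-n^2/2}$ from \eqref{PsiSobolevnorm}, the error term in \eqref{matrixcoefficientsestimatewithsobolevnorms} becomes
\begin{equation*}
O\bigl(\norm{g}^{2(s_0-n+1)}\delta^{-2l-n(n+1)/2}\bigr),
\end{equation*}
so the claim follows with $c_n:=2l+n(n+1)/2$, which is strictly larger than $n(n+1)/2$ since $l\ge 1$.

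There is no substantial obstacle: the corollary is essentially a bookkeeping consequence of Theorem \ref{main-bound} once the Sobolev norms of $\Psi_1,\Psi_2$ are controlled. The single subtlety -- the removal of the $K$-finiteness hypothesis at the cost of a Sobolev norm -- has already been absorbed into the statement of Theorem \ref{main-bound} and is not something I would reprove here.
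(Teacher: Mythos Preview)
Your proof is correct and follows the same route as the paper, which simply cites Theorem~\ref{main-bound}, Remark~\ref{Kfiniteremark}, and the norm bounds \eqref{PsiL2norm}--\eqref{PsiSobolevnorm} as implying the corollary. The only (inessential) difference is that the paper uses Remark~\ref{Kfiniteremark} to replace $\mathcal{S}_l(\Psi_1)$ by $\|\Psi_1\|_{L^2}\asymp\delta^{-n/2}$ (since $\Psi_1$ is $K$-invariant), yielding the slightly smaller exponent $c_n=l+n(n+1)/2$; your value $c_n=2l+n(n+1)/2$ is equally valid for the stated claim.
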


\subsection{Relating counts to volumes}\label{countstovolumes}

We are now ready to show that the number of elements in
$\G\cap R_M(Q,\xi)$ can be approximated by
$\vol{R_M(Q,\xi)}/\vol{\G\backslash G}$.
Recall the constant $c_n$ from  Corollary \ref{dirty-estimate}.

\begin{lemma}\label{counts-volumes}
Fix $\xi> 0$ and fix $s_0$ as in Theorem \ref{main-bound}. For $\norm{M}\geq m_0>1$, 
we have 
\begin{align*}
  \#\G\cap R_M(Q,\xi)=&\vol{R_M(Q,\xi)}/\vol{\G\backslash G}+O_\xi(Q^{a_n}\norm{M}^{b_n}),
\end{align*}
where
\begin{equation*}
  a_n={2(n-1)\left(1-\frac{n-1-s_0}{n(1+c_n)-1}\right)},\quad b_n=(n-1)\frac{4c_n}{n(1+c_n)-1}.
\end{equation*}
\end{lemma}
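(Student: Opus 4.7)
The plan is to approximate $\#\G\cap R_M(Q,\xi)$ by $\vol{R_M(Q,\xi)}/\vol{\G\backslash G}$ via a sandwich argument, using the automorphic test functions $\Psi_1,\Psi_2$ of Section~\ref{testfunctionsection} together with the decay of matrix coefficients (Corollary~\ref{dirty-estimate}). First I would truncate: by \eqref{size-complement} and the trivial Lax--Phillips bound $\#\G\cap B_{Q/X}=O((Q/X)^{2(n-1)})$, both sides of the claimed identity agree, up to an error of $O((Q/X)^{2(n-1)})$, with the corresponding quantities obtained by replacing $R_M(Q,\xi)$ with $R:=R_M(Q,\xi,X)$.

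For $\gamma\in\G$ I would form the smoothed indicator
\begin{equation*}
\widetilde\chi_R(\gamma):=\int_G\!\int_G \psi_1(h_1)\psi_2(h_2)\,\chi_R(h_1^{-1}\gamma h_2^{-1})\,dh_1\,dh_2.
\end{equation*}
The support conditions on $\psi_1,\psi_2$ (in $B_{\delta_1}$ and $D_\delta$ respectively), $\int\psi_i=1$, and the inversion-invariance of $B_{\delta_1}$ and $D_\delta$ force $\widetilde\chi_R(\gamma)=1$ for $\gamma\in R^-$, $\widetilde\chi_R(\gamma)=0$ for $\gamma\notin R^+$, and $\widetilde\chi_R(\gamma)\in[0,1]$ in general. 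Summation thus sandwiches $\sum_\gamma\widetilde\chi_R(\gamma)$ between $\#\G\cap R^-$ and $\#\G\cap R^+$. A standard unfolding (unfold $\Psi_2$ and change variables) yields
\begin{equation*}
\sum_\gamma\widetilde\chi_R(\gamma)=\int_R\langle\pi(g)\Psi_2,\Psi_1\rangle_{\G\backslash G}\,dg.
\end{equation*}
Applying Corollary~\ref{dirty-estimate} pointwise and bounding $\int_{B_Q}\norm{g}^{2(s_0-n+1)}\,dg=O(Q^{2s_0})$ via Proposition~\ref{KAKmeasure}, this integral equals $\vol{R}/\vol{\G\backslash G}+O(\delta^{-c_n}Q^{2s_0})$.

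Combining the sandwich with the inclusions $(R^-)^+\subset R\subset (R^+)^-$ from \eqref{smallobs1} (applied with $R^\pm$ in place of $R$), and using Lemma~\ref{fattening-slimming-control} together with Lemma~\ref{volume-comparison} to compare $\vol{R^\pm}$ with $\vol{R}$, I obtain
\begin{equation*}
\#\G\cap R_M = \frac{\vol{R_M}}{\vol{\G\backslash G}} + O(E_1+E_2+E_3+E_4),
\end{equation*}
where
\begin{equation*}
E_1=(Q/X)^{2(n-1)},\; E_2=Q^{2(n-1)}\delta X^2\norm{M}^{4-n+\epsilon},\; E_3=\norm{M}^{2(n-1)}Q^{2(n-1)^2/(n+1)},\; E_4=\delta^{-c_n}Q^{2s_0}.
\end{equation*}

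To finish I would optimize. Writing $D:=n(1+c_n)-1$, the balancing choice is
\begin{equation*}
X=Q^{(n-1-s_0)/D}\norm{M}^{-2c_n/D},\qquad \delta=Q^{-2n(n-1-s_0)/D}\norm{M}^{-4(n-1)/D}.
\end{equation*}
A direct calculation gives $E_1=E_4=Q^{a_n}\norm{M}^{b_n}$; the $\norm{M}$-exponent of $E_2$ works out to $-(c_n(n-2)^2+n(n-1))/D<0$, so $E_2\leq Q^{a_n}\leq Q^{a_n}\norm{M}^{b_n}$; and $E_3$ is absorbed in the target under the implicit size restriction on $\norm{M}$ relative to $Q$ (compatible with Remark~\ref{M-remark}). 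The main obstacle is precisely this four-term optimization against two free parameters, combined with verifying that the chosen $\delta$ and $X$ meet the smallness/boundedness hypotheses of Lemmas~\ref{fattening-slimming-control} and \ref{volume-comparison}.
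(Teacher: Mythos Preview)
Your proposal is correct and follows essentially the same route as the paper's proof: the same smoothed sandwich inequality via $\Psi_1\otimes\Psi_2$, the same unfolding to a matrix-coefficient integral, the same appeal to Corollary~\ref{dirty-estimate}, and the same final parameter choices (your $\delta$ and $X$ coincide with the paper's, which first sets $\delta=X^{-2n}\norm{M}^{-4}$ and then chooses $X$). The only organizational difference is that the paper applies the sandwich directly with the sets $R_M(Q\delta_2^{\pm1},\xi\delta_3^{\pm1},2^{\pm1}X)$ furnished by Lemma~\ref{fattening-slimming-control}, whereas you apply it with $R^{\pm}$ and then invoke Lemma~\ref{fattening-slimming-control} only to bound $\vol{R^{\pm}}$; the two are equivalent.

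Two small points worth tightening: (i) the paper's two-step optimization is arranged precisely so that $\delta X^2\norm{M}^4=X^{-2(n-1)}$, which makes the boundedness of $\delta_3$ transparent---your single-step choice gives the same relation, but you should say so explicitly rather than leaving it as ``the main obstacle''; (ii) the absorption of $E_3$ is not handled by Remark~\ref{M-remark} but by the observation (made in the paper) that one may assume $\norm{M}<Q^{(n-1-s_0)/(2c_n)}$, since otherwise $Q^{a_n}\norm{M}^{b_n}\geq Q^{2(n-1)}$ and the claim is trivial.
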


\begin{proof}
Let $A\subseteq G$ be a bounded set  and
consider 
\begin{equation*}F_{A}(g_1,g_2):=\sum_{\g\in\G}1_{A}(g_1^{-1}\g g_2).
\end{equation*}
We note that this function
is $\G$-invariant in both variables under multiplication from the left. We claim that 
\begin{equation}\label{second-inequality}
\# \G\cap A^-\leq
\langle F_{A}, \Psi_1\otimes \Psi_2\rangle_{\G\backslash G\times
\G\backslash G }\leq \# \G\cap A^+.
\end{equation}
Here $\Psi_i$ is defined in \eqref{automorphictestfunctions} and
$A^+$ (resp.\ $A^-$) is the $B_{\delta_1}\times D_\delta$--fattening
(resp.\ slimming) of $A$.

First, we unfold the functions $\Psi_i$ in the middle expression and find that this inner product equals
\begin{equation}\label{intermediate}
\sum_{\g\in \G} \int_{G}\int_{G}1_{A}(g_1^{-1}\g g_2)\psi_1(g_1)\psi_2(g_2)\,dg_1dg_2.
\end{equation}
To see the left inequality in \eqref{second-inequality}, we now note that for every $\g\in\G\cap A^-$ we have, since $B_{\delta_1}$ is
symmetric under inversion, that $g_1^{-1}\g g_2\in A$ whenever
$(g_1,g_2)\in B_{\delta_1}\times D_\delta$. Therefore, using
\eqref{phi1} and \eqref{phi2}, we find that every term
in the sum \eqref{intermediate} corresponding to such a $\g$ contributes by 1. 

To see the right inequality in \eqref{second-inequality}, we note that for $\g$ to give a non-zero contribution to the sum
\eqref{intermediate} the requirements
\eqref{phi1},~\eqref{phi2} imply that there must exist a pair
$(g_1,g_2)\in B_{\delta_1}\times D_\delta$ such that $g_1^{-1}\g
g_2\in A$. But this implies, since $D_{\delta}$ is symmetric under
inversion, that $\g\in A^+$. Moreover,
given $\gamma\in A^+$ the corresponding integral in \eqref{intermediate} can be at most 1, again by \eqref{phi1}, \eqref{phi2}. 

On the other hand, analyzing the inner product in \eqref{second-inequality} by unfolding the $\G$-sum defining $F_A$ and making the
change of variables $g=g_1^{-1}g_2$, we find  
\begin{align*}
\langle F_{A}, \Psi_1\otimes \Psi_2\rangle_{\G\backslash G\times \G\backslash G }=&\int_{A}\int_{\G\backslash G}\Psi_1(g_1)\Psi_2(g_1g)\,dg_1dg\\
=&\int_{A}\langle\pi(g)\Psi_2, \Psi_1\rangle_{\G\backslash G}\,dg,
\end{align*}
where $\pi$ denotes the right regular representation. Assume now that the parameters $\delta$ and $X$ are satisfying
\begin{equation}\label{all-assumptions}
\delta\norm{M}^2\textrm{ small, Eq.\ \eqref{truncation-parameter},  and } \delta_3=1+7c\delta X^2\norm{M}^4=O(1).
\end{equation}
Here the constant $c$ is as in Lemma~\ref{fattening-slimming-control} and Lemma~\ref{volume-comparison}. It follows from the above discussion and
Lemma~\ref{fattening-slimming-control} that\footnote{Notice that we are free to apply Lemma 5.1 since $\xi/\delta_3$ by assumption \eqref{all-assumptions} is bounded from below.}  
\begin{equation*}
\int_{R_M(Q/\delta_2,\xi/\delta_3,X/2)}\!\!\!\!\!\!\!\!\!\!\!\!\!\!\!\!\!\!\!\!\!\!\!\!\!\!\!\!\!\!\!\!\langle\pi(g)\Psi_2,\Psi_1\rangle_{\G\backslash G}\,dg
\leq \# \G\cap R_M(Q,\xi,X)
  \leq \int_{R_M(Q\delta_2,\xi\delta_3,2X)}\!\!\!\!\!\!\!\!\!\!\!\!\!\!\!\!\!\!\!\!\!\!\!\!\!\!\!\!\langle\pi(g)\Psi_2,\Psi_1\rangle_{\G\backslash G}\,dg.
\end{equation*}
Once this has been established, we only need to use the decay of matrix
coefficients (Corollary \ref{dirty-estimate}) to approximate the
integrals by volumes, and then the volume estimates from Lemma ~\ref{volume-comparison} to estimate
the relevant count. To be more precise:

Using Corollary \ref{dirty-estimate}, 
and $R_M(\delta_2Q,\delta_3\xi,2X)\subset B_{2Q}$ for $\delta\norm{M}^2$ sufficiently
small, we find that since 
\begin{equation*}
  \int_{R_M(Q\delta_2^{\pm 1},\xi\delta_3^{\pm 1},2^{\pm 1}X)}\norm{g}^{2(s_0-n+1)}\,dg=O(Q^{2s_0}) 
\end{equation*}
we have 
\begin{equation*}
\frac{\vol{R_M(Q/\delta_2,
    \xi/\delta_3,X/2)}}{\vol{\G\backslash G}}+O(\delta^{-c_n}Q^{2s_0}) \leq  \# \G\cap R_M(Q,\xi,X)
\end{equation*}
and
\begin{equation*}
    \# \G\cap R_M(Q,\xi,X)\leq \frac{\vol{R_M(Q\delta_2, \xi\delta_3,2X)}}{\vol{\G\backslash G}}+O(\delta^{-c_n}Q^{2s_0}).
\end{equation*}
Furthermore, using \eqref{size-complement}  and Lemma~\ref{volume-comparison} with a fixed small $\epsilon$, we see that 
\begin{align*}
&\frac{\vol{R_M(Q\delta_2^{\pm 1}, \xi\delta_3^{\pm 1},2^{\pm1}X)}}{\vol{\G\backslash G}}=\frac{\vol{R_M(Q\delta_2^{\pm 1}, \xi\delta_3^{\pm 1})}}{\vol{\G\backslash G}}+O\left(\frac{Q^{2(n-1)}}{X^{2(n-1)}}\right)\\
&=\frac{\vol{R_M(Q, \xi)}}{\vol{\G\backslash G}}+O_\xi\left(Q^{2(n-1)}\delta X^2\norm{M}^{4-n+\epsilon}+\norm{M}^{2(n-1)}Q^{2\frac{(n-1)^2}{n+1}}+\frac{Q^{2(n-1)}}{X^{2(n-1)}}\right).
\end{align*}
In order to control the error terms above, we first balance 
$Q^{2(n-1)}\delta X^2\norm{M}^{4} $ with $Q^{2(n-1)}/X^{2(n-1)}$ and
find
\begin{equation}\label{choice of delta}
  \delta=X^{-2n}\norm{M}^{-4}.
\end{equation}
We omit the extra decay in $\norm{M}^{-n+\epsilon}$ in order to be
able to  verify that $\delta_3$ is bounded; with the above choice of $\delta$ we have
$\delta_3=1+7cX^{-2(n-1)}$ and by \eqref{truncation-parameter} this is
indeed bounded. Using that 
\begin{equation*}
  \#\G\cap \mathcal R_M(Q,\xi,X)= \#\G\cap \mathcal R_M(Q,\xi)+O\left(\frac{Q^{2(n-1)}}{X^{2(n-1)}}\right),
\end{equation*}
we find, with $\delta$ as in \eqref{choice of delta}, that 
\begin{align}\label{almost-there}
  \#\G\cap \mathcal
  R_M&(Q,\xi)=\frac{\vol{R_M(Q,\xi)}}{\vol{\G\backslash G}}\\ 
&+O_\xi\left(\frac{Q^{2(n-1)}}{X^{2(n-1)}}+\delta^{-c_n}Q^{2s_0}
  +\norm{M}^{2(n-1)}Q^{2\frac{(n-1)^2}{n+1}}\right).\nonumber
\end{align}
We now balance $X$ between the first two error terms and find 
\begin{equation*}
X=Q^{\frac{n-1-s_0}{n(1+c_n)-1}}\norm{M}^{-\frac{2c_n}{n(1+c_n)-1}}.
\end{equation*}
We can certainly assume that $\norm M<Q^{(2(n-1)-a_n)/b_n}=Q^{\frac{n-1-s_0}{2c_n}}$, since
otherwise the claim of the lemma is trivial. 
With these choices of parameters, a computation, using also that
$c_n>\frac{n(n+1)}{2}$, verifies that for $Q$ sufficiently large
\eqref{all-assumptions} is indeed satisfied. It is also
straightforward to verify, again using $c_n>\frac{n(n+1)}{2}$, that 
\begin{equation*}
  \norm{M}^{2(n-1)}Q^{2\frac{(n-1)^2}{n+1}}\leq Q^{a_n}\norm{M}^{b_n}.
\end{equation*}
 Inserting these values of
$X$ and $\delta$ in \eqref{almost-there}, we arrive at the claim.
\end{proof}

\subsection{More on the relation between counts and volumes}

In this section we briefly discuss a modified version of Lemma
\ref{counts-volumes} needed in the proof of Theorem
\ref{restricted-main-theorem}. We recall that $\mathcal U$ denotes the
hyperbolic unit sphere centered at $e_{n+1}$. Let $\mathcal
S\subset\mathcal U$ be a spherical cap with opening angle $\theta<\pi$,
and denote the hyperbolic cone specified by the vertex $e_{n+1}$ and the cross-section $\mathcal S$ by $\mathcal C$. 

We are interested in counting the number of points in the intersection of $\G$ with the set $\mathcal R_{M,\mathcal C}(Q,\xi)$ defined in \eqref{RMC}. As in the case studied above we consider, for positive numbers $X$ satisfying \eqref{truncation-parameter}, the truncation
\begin{equation*}
\mathcal R_{M,\mathcal C}(Q,\xi,X):=\left\{g\in\mathcal R_{M,\mathcal C}(Q,\xi) : \norm{g}>Q/X\right\}.
\end{equation*}
We note that in contrast to $\mathcal R_{M}(Q,\xi,X)$, this set is not left $K$-invariant and hence we have to adapt the fattening and slimming described in Lemma \ref{fattening-slimming-control} slightly. We need to consider, for small parameters $\delta>0$, the $D_{\delta}\times D_{\delta}$-fattening (respectively $D_{\delta}\times D_{\delta}$-slimming) of $\mathcal R_{M,\mathcal C}(Q,\xi,X)$. It turns out that both the result and the proof of Lemma \ref{fattening-slimming-control} carries over to the present situation except for one detail. If we let $g_1=h_1gh_2$ with $g\in R_{M,\mathcal C}(Q,\xi,X)$ and $h_1,h_2\in D_\delta$, then $g_1e_{n+1}$ and $g_1Me_{n+1}$ need not be contained in the cone $\mathcal C$. In order to compensate for this fact, we have to enlarge the cone in the right-hand sides of the statements corresponding to Lemma \ref{fattening-slimming-control}. To be more precise:       
Let $\mathcal C=\{x\in\H^n : \v{x}{g'e_{n+1}}<\theta\}$ for a suitable
$g'\in G$ not fixing the base point $e_{n+1}$. Then, using Lemma
\ref{fattening-stability}\eqref{v-growth} and the triangle inequality
(Proposition~\ref{angle-props}\eqref{trig-ineq}), it is possible to show that 
\begin{align*}
R_{M,\mathcal C}^+(Q,\xi,X)\subseteq R_{M,\mathcal C_1}(\delta_2Q,\delta_3\xi,2X),\\ 
R_{M,\mathcal C}(Q,\xi,X)\subseteq R_{M,\mathcal C_1}^-(\delta_2Q, \delta_3\xi, 2X),
\end{align*}
where $\mathcal C_1=\{x\in\H^n : \v{x}{g'e_{n+1}}<\theta+\delta_4\}$ and $\delta_4=\kappa\delta+\frac{2\xi\delta_3}{(Q\delta_2)^2}$ for an absolute constant $\kappa$ (here $\delta_2$ and $\delta_3$ are as in Lemma \ref{fattening-slimming-control}). Recall that we may assume that $\delta_3$ is bounded and that $\xi/Q^2$ is sufficiently small; hence $\delta_4$ is small. The fact that we have to consider $\mathcal C_1$ on the right-hand sides above introduces an extra approximation step when we generalize Lemma \ref{volume-comparison}; we first compare (for example) $\vol{R_{M,\mathcal C_1}(\delta_2Q,\delta_3\xi)}$ to $\vol{R_{M,\mathcal C}(\delta_2Q,\delta_3\xi)}$ using an elementary estimate and then compare $\vol{R_{M,\mathcal C}(\delta_2Q,\delta_3\xi)}$ to $\vol{R_{M,\mathcal C}(Q,\xi)}$ using Theorem \ref{volume-theorem-2}. The details are straightforward.

Turning to the generalization of Lemma \ref{counts-volumes}, we need to replace the test function $\Psi_1\otimes\Psi_2$ with $\Psi_2\otimes\Psi_2$. It follows that we need to consider the matrix coefficient $\langle\pi(g)\Psi_2,\Psi_2\rangle_{\G\backslash G}$. Using Theorem \ref{main-bound} and \eqref{PsiSobolevnorm}, we find that there exists an integer $d_n>n^2+1$ such that
\begin{equation}\label{Psi2Psi2matrixcoefficient}
\langle\pi(g)\Psi_2,\Psi_2\rangle_{\G\backslash G}=\frac{1}{\vol{\G\backslash G}}+O\big(\norm{g}^{2(s_0-n+1)}\delta^{-d_n}\big)
\end{equation}
for all sufficiently small $\delta>0$. Noticing that the rest of the proof can be generalized with only minor changes, we arrive at the following result: 

\begin{lemma}\label{counts-volumes-2}
Fix $\xi> 0$ and fix $s_0$ as in Theorem \ref{main-bound}. Let $d_n$ be as in \eqref{Psi2Psi2matrixcoefficient}. Let $\mathcal S\subset\mathcal U$ be a spherical cap and denote the hyperbolic cone specified by the vertex $e_{n+1}$ and the cross-section $\mathcal S$ by $\mathcal C$. Then, for $\norm{M}\geq m_0>1$, 
we have 
\begin{align*}
  \#\G\cap R_{M,\mathcal C}(Q,\xi)=&\vol{R_{M,\mathcal C}(Q,\xi)}/\vol{\G\backslash G}+O_{\xi}\big(Q^{a_n'}\norm{M}^{b_n'}\big),
\end{align*}
where
\begin{equation*}
a_n'={2(n-1)\left(1-\frac{n-1-s_0}{n(1+d_n)-1}\right)},\quad b_n'=(n-1)\frac{4d_n}{n(1+d_n)-1}.
\end{equation*}
\end{lemma}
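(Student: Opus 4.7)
The plan is to run the argument of Lemma \ref{counts-volumes} with three modifications dictated by the differences described at the end of Section \ref{countstovolumes}: the truncated set $\mathcal R_{M,\mathcal C}(Q,\xi,X)$ is no longer left $K$-invariant, so we must use the symmetric test function $\Psi_2\otimes\Psi_2$ and perform $D_\delta\times D_\delta$-fattening/slimming; the fattening does not preserve the cone condition, so we must replace $\mathcal C$ by an enlarged cone $\mathcal C_1$; and the decay of the matrix coefficient $\langle\pi(g)\Psi_2,\Psi_2\rangle_{\G\backslash G}$ is governed by \eqref{Psi2Psi2matrixcoefficient} with exponent $d_n$ in place of $c_n$.

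First I would set
\begin{equation*}
F_A(g_1,g_2):=\sum_{\gamma\in\G}1_A(g_1^{-1}\gamma g_2)
\end{equation*}
with $A=\mathcal R_{M,\mathcal C}(Q,\xi,X)$ and, exactly as in the proof of Lemma \ref{counts-volumes}, sandwich $\#\G\cap A$ between counts over $A^-$ and $A^+$, where now both fattening and slimming are done with $D_\delta\times D_\delta$. Unfolding one of the $\Psi_2$'s and changing variables $g=g_1^{-1}g_2$ yields
\begin{equation*}
\langle F_A,\Psi_2\otimes\Psi_2\rangle_{\G\backslash G\times\G\backslash G}=\int_A\langle\pi(g)\Psi_2,\Psi_2\rangle_{\G\backslash G}\,dg.
\end{equation*}
Using the inclusions
\begin{equation*}
R_{M,\mathcal C}^+(Q,\xi,X)\subseteq R_{M,\mathcal C_1}(\delta_2 Q,\delta_3\xi,2X),\qquad R_{M,\mathcal C}(Q,\xi,X)\subseteq R_{M,\mathcal C_1}^-(\delta_2 Q,\delta_3\xi,2X)
\end{equation*}
noted in the excerpt (with $\mathcal C_1$ the enlargement of $\mathcal C$ by angle $\delta_4=\kappa\delta+O(\xi/Q^2)$), we obtain a two-sided bound for $\#\G\cap\mathcal R_{M,\mathcal C}(Q,\xi,X)$ in terms of integrals of $\langle\pi(g)\Psi_2,\Psi_2\rangle_{\G\backslash G}$ over $R_{M,\mathcal C_1}(Q\delta_2^{\pm 1},\xi\delta_3^{\pm 1},2^{\pm 1}X)$.

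Next I would apply \eqref{Psi2Psi2matrixcoefficient} and the trivial estimate $\int_{R_{M,\mathcal C_1}(2Q,2\xi,X/2)}\|g\|^{2(s_0-n+1)}\,dg=O(Q^{2s_0})$ to replace these integrals by volumes at the cost of an error $O(\delta^{-d_n}Q^{2s_0})$. I would then compare, in two steps, $\vol{R_{M,\mathcal C_1}(Q\delta_2^{\pm 1},\xi\delta_3^{\pm 1})}$ with $\vol{R_{M,\mathcal C}(Q,\xi)}$: first, bound the volume difference $\vol{R_{M,\mathcal C_1}(\cdots)}-\vol{R_{M,\mathcal C}(\cdots)}$ elementarily by an $O(\delta_4 Q^{2(n-1)})$ term (since the enlargement of the cone only affects a thin spherical shell of angular width $\delta_4$), and second, use Theorem \ref{volume-theorem-2} in place of Theorem \ref{volume-theorem} inside the argument of Lemma \ref{volume-comparison} to relate $\vol{R_{M,\mathcal C}(Q\delta_2^{\pm 1},\xi\delta_3^{\pm 1})}$ and $\vol{R_{M,\mathcal C}(Q,\xi)}$. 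Putting the pieces together and absorbing $\vol{R_{M,\mathcal C}(Q,\xi)\setminus R_{M,\mathcal C}(Q,\xi,X)}=O(Q^{2(n-1)}/X^{2(n-1)})$, the analog of \eqref{almost-there} becomes
\begin{equation*}
\#\G\cap\mathcal R_{M,\mathcal C}(Q,\xi)=\frac{\vol{R_{M,\mathcal C}(Q,\xi)}}{\vol{\G\backslash G}}+O_\xi\!\left(\frac{Q^{2(n-1)}}{X^{2(n-1)}}+\delta^{-d_n}Q^{2s_0}+\|M\|^{2(n-1)}Q^{2\frac{(n-1)^2}{n+1}}\right).
\end{equation*}

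Finally I would optimize the parameters exactly as in Lemma \ref{counts-volumes}, first taking $\delta=X^{-2n}\|M\|^{-4}$ to balance $Q^{2(n-1)}\delta X^2\|M\|^4$ against $Q^{2(n-1)}/X^{2(n-1)}$ and keep $\delta_3$ bounded, then balancing $Q^{2(n-1)}/X^{2(n-1)}$ against $\delta^{-d_n}Q^{2s_0}$ by choosing
\begin{equation*}
X=Q^{\frac{n-1-s_0}{n(1+d_n)-1}}\|M\|^{-\frac{2d_n}{n(1+d_n)-1}}.
\end{equation*}
Using $d_n>n^2+1$ one verifies that with these choices \eqref{truncation-parameter} holds, $\delta\|M\|^2$ is small, and the last error term $\|M\|^{2(n-1)}Q^{2(n-1)^2/(n+1)}$ is dominated by $Q^{a_n'}\|M\|^{b_n'}$. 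The main obstacle is the cone-enlargement step: one has to check that the angular excess $\delta_4$ produced by $D_\delta\times D_\delta$-fattening—which mixes an $O(\delta)$ contribution from Lemma \ref{fattening-stability}\eqref{v-growth} with an $O(\xi/Q^2)$ contribution from the angle condition—introduces only subdominant error terms under the balanced choice of $\delta$ and $X$. Since $\delta_4=O(\delta)+o(1)$ and $\delta$ is a negative power of $Q\|M\|$, this check goes through and we reach the claimed exponents $a_n', b_n'$.
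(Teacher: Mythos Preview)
Your proposal is correct and follows essentially the same approach as the paper: the paper's own ``proof'' of this lemma is precisely the sketch in the paragraphs preceding its statement, outlining the three modifications (use $D_\delta\times D_\delta$-fattening/slimming and the test function $\Psi_2\otimes\Psi_2$, enlarge the cone to $\mathcal C_1$, and replace $c_n$ by $d_n$ in the parameter optimization), together with the remark that the rest of the proof of Lemma~\ref{counts-volumes} goes through with only minor changes. You have identified all of these modifications and carried out the balancing of parameters exactly as the paper intends, including the two-step volume comparison (first $\mathcal C_1$ versus $\mathcal C$, then the $\delta_2,\delta_3$ perturbation via Theorem~\ref{volume-theorem-2}); your discussion in fact supplies more detail than the paper does.
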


\section{Proofs of the main theorems}\label{proofs}

In this section we finish the proofs of the main results stated in the introduction.

\subsection{Proof of Theorem \ref{main theorem}}

Recall from \eqref{pair-correlation-function-intro} and \eqref{basic-counting} that the basic object we need to investigate is the sum
\begin{equation}\label{NQ}
\#N_{2,Q}(\xi)=\sum_{\substack{M\in \G\\ M\notin K}}\#\G\cap \mathcal R_M(Q,k_{n,\Gamma}\xi).
\end{equation} 
Recall also the constant $k_{n,\Gamma}$ (see \eqref{kn-constant}) and the function 
\begin{equation*}
f_{\xi}(t)=\frac1{\xi^n}\int_{I(\xi,t)}\left(1-y^2\right)^{n-2}\left(y+\coth t\right)^{-(n-1)}\,dy
\end{equation*}
defined in \eqref{DEFOFF} (see also \eqref{set-I}). We will prove the
following precise version of Theorem \ref{main theorem} establishing
the existence and properties of the limit
\begin{equation}
  \label{R2limit}
R_2(\xi):=\lim_{Q\to\infty}R_{2,Q}(\xi).  
\end{equation}

\begin{theorem}\label{main theorem precise version}
Let $n\geq 2$ and let $\G\subset G$ be a lattice. The limit \eqref{R2limit}, defining the pair correlation function $R_2$, exists and is differentiable. In fact, the pair correlation density function $g_2$ is given by
\begin{align}\label{g2formula}
g_2(\xi)=\frac{(n-1)\omega_{n-1}k_{n,\Gamma}}{\omega_n}\sum_{M\in\Gamma}f_{\xi
  k_{n,\Gamma}}(t(M)).
\end{align}
Furthermore, there exists a real number $\nu>0$, depending only on $n$ and the spectral gap for the group $\G$, satisfying, for fixed $\xi>0$ and $Q\to\infty$, the relation
\begin{equation}\label{NQdefinition}
\#N_{2,Q}(\xi )=\frac{\omega_nQ^{2(n-1)}}{2^{n-1}(n-1)\vol{\G\backslash G}}R_2(\xi)+O_{\xi}\big(Q^{2(n-1)-\nu}\big).
\end{equation} 
\end{theorem}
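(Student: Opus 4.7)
\textbf{Proof plan for Theorem \ref{main theorem precise version}.}

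The starting point is the identity \eqref{NQ}:
$$\#N_{2,Q}(\xi)=\sum_{\substack{M\in\G\\ M\notin K}}\#\G\cap\mathcal R_M(Q,k_{n,\G}\xi).$$
The plan is to replace, term by term, the count by the corresponding volume via Lemma \ref{counts-volumes}, expand each volume using Theorem \ref{volume-theorem}, interchange the $M$-sum with the $\zeta$-integral, and finally divide by $\#N_\G(Q)$ using the Lax--Phillips asymptotic \eqref{LP-intro}. The non-trivial work is entirely bookkeeping of error terms, which requires splitting the sum over $M$ into three ranges.

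First I would note that $\mathcal R_M(Q,k_{n,\G}\xi)=\emptyset$ whenever $\norm{M}$ exceeds, say, $\sqrt{2}Q^{2}$: indeed, if $g\in\mathcal R_M(Q,k_{n,\G}\xi)$ then both $\norm{g}\leq Q$ and $\norm{gM}\leq Q$ force (via Proposition \ref{change-under-right-mult}) a bound $t(M)\leq 2\cosh^{-1}(Q^{2}/2)$. Thus only finitely many $M$ contribute for each fixed $Q$. I would then split the $M$-sum into
$$\text{(I) } \norm{M}<m_{0},\qquad \text{(II) } m_{0}\leq\norm{M}\leq T,\qquad \text{(III) }\norm{M}>T,$$
where $T=T(Q,\xi)$ is a truncation parameter to be optimized. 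Range (I) contains a bounded number of $M$ and can be handled by an independent application of the test-function argument of Lemma \ref{counts-volumes} (the assumption $\norm{M}\geq m_{0}$ there is only used to tidy up the error terms). Range (II) is handled directly by Lemma \ref{counts-volumes} combined with Theorem \ref{volume-theorem}, giving, for each such $M$,
$$\#\G\cap\mathcal R_{M}(Q,k_{n,\G}\xi)=\frac{\omega_{n-1}Q^{2(n-1)}}{2^{n-1}\vol{\G\backslash G}}\int_{0}^{k_{n,\G}\xi}f_{\zeta}(t(M))\,d\zeta+\mathcal E_{M}(Q),$$
with $\mathcal E_{M}(Q)=O_{\xi}\bigl(Q^{a_{n}}\norm{M}^{b_{n}}+g(k_{n,\G}\xi)\norm{M}^{2(n-1)}Q^{2(n-1)^{2}/(n+1)}\bigr)$. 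Summing over $\norm{M}\leq T$ and invoking the Lax--Phillips count in the form $\#\{M\in\G:\norm{M}\leq T\}\ll T^{2(n-1)}$ (together with partial summation to control $\sum_{\norm{M}\leq T}\norm{M}^{\beta}$), these errors are polynomially bounded in $T$ and $Q$.

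For range (III), the count is trivially bounded (the set is empty for $\norm{M}>\sqrt{2}Q^{2}$), but the volume contribution $\sum_{\norm{M}>T}\int_{0}^{k_{n,\G}\xi}f_{\zeta}(t(M))\,d\zeta$ needs to be shown negligible. For this I would use the decay estimate for $f_{\zeta}(l)$ from Lemma \ref{stuff} in Appendix \ref{appendix} (the same estimate invoked in the proof of Lemma \ref{volume-comparison}), which combined with the Lax--Phillips count gives absolute convergence of $\sum_{M\in\G}\int_{0}^{k_{n,\G}\xi}f_{\zeta}(t(M))\,d\zeta$ and a concrete tail bound of the form $O(T^{-\eta})$ for some $\eta>0$. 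This simultaneously justifies extending the $M$-sum in the main term to all of $\G$, and completes the identification
$$R_{2}(\xi)=\lim_{Q\to\infty}R_{2,Q}(\xi)=\frac{(n-1)\omega_{n-1}}{\omega_{n}}\int_{0}^{k_{n,\G}\xi}\sum_{M\in\G}f_{\zeta}(t(M))\,d\zeta,$$
after dividing by $\#N_{\G}(Q)$ and applying \eqref{LP-intro}. Differentiability of $R_{2}$ and the formula \eqref{g2formula} follow from the fundamental theorem of calculus together with the continuity of $\zeta\mapsto\sum_{M}f_{\zeta}(t(M))$, which in turn is a consequence of Lemma \ref{stuff} (dominated convergence). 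Finally, to obtain the effective rate \eqref{NQdefinition}, I would assemble the three error contributions --- the $T$-dependent sum in range (II), the $T^{-\eta}$ tail in range (III), and the Lax--Phillips error in \eqref{LP-intro} --- and optimize $T$ as a power of $Q$, extracting the exponent $\nu>0$ in terms of $n$, $c_{n}$, and the spectral gap parameter $n-1-s_{0}$.

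The main obstacle is the bookkeeping in the second paragraph: the effective estimate from Lemma \ref{counts-volumes} deteriorates with $\norm{M}^{b_{n}}$, while the Theorem \ref{volume-theorem} error grows like $\norm{M}^{2(n-1)}$, so one must choose $T$ small enough to control both, yet large enough that the tail from Appendix \ref{appendix} (which decays only as a fixed power of $T$) does not dominate. This three-way balancing is what determines the effective exponent $\nu$.
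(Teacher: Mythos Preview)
Your overall architecture is right and matches the paper's, but there is a genuine gap in your treatment of range (III). Knowing that $\mathcal R_M(Q,k_{n,\G}\xi)=\emptyset$ for $\norm{M}>\sqrt{2}Q^{2}$ only tells you that the sum $\sum_{\norm{M}>T}\#\G\cap\mathcal R_M(Q,k_{n,\G}\xi)$ has finitely many terms; it does \emph{not} tell you that this sum is small. You cannot apply Lemma~\ref{counts-volumes} to the terms with $T<\norm{M}\leq\sqrt{2}Q^{2}$ either, since the error $Q^{a_n}\norm{M}^{b_n}$ summed over that range would swamp the main term. So the tail of the \emph{count} (not just the tail of the volume integral) needs its own argument.

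This is precisely what the paper supplies in Lemmas~\ref{dyadic lemma} and~\ref{error term E}. The key geometric observation is that if $(\gamma,\gamma')\in N_\G(Q)^2$ satisfy $\norm{\gamma^{-1}\gamma'}\geq T$ and $v(\gamma,\gamma')<2k_{n,\G}\xi/Q^2$, then Proposition~\ref{change-under-right-mult} forces $\abs{t(\gamma)-t(\gamma')}$ to be large, so one of $\gamma,\gamma'$ must have norm $O(Q/T)$. One then bounds, for each such short $\gamma'$, the number of $\gamma\in N_\G(Q)$ in a thin cone about $\gamma'$ by $O_\xi(\log Q)$ via a dyadic argument (Lemma~\ref{dyadic lemma}). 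This yields
$$\sum_{\norm{M}\geq T}\#\G\cap\mathcal R_M(Q,k_{n,\G}\xi)=O_\xi\bigl(Q^{2(n-1)}T^{-2(n-1)}\log Q\bigr),$$
which is the missing ingredient. Once you have this, your balancing of $T$ against the range~(II) errors goes through exactly as you describe; the paper ends up with $T=Q^{(2(n-1)-a_n)/(4(n-1)+b_n)}$ and $\nu$ just below $\frac{(n-1)(n-1-s_0)}{n(1+c_n)+c_n-1}$. (Your separate range~(I) is unnecessary: in the paper the restriction $\norm{M}\geq m_0$ in Lemma~\ref{counts-volumes} is satisfied automatically since $\G\cap K=\{I\}$ and $M\notin K$.)
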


\begin{remark}
 It follows immediately from \eqref{g2formula}  that the function $F_{\xi}$ in Theorem \ref{main theorem} is given by 
\begin{equation}\label{Fxi}
F_{\xi}(t):=\frac{(n-1)\omega_{n-1}k_{n,\Gamma}}{\omega_n}f_{\xi k_{n,\Gamma}}(t).
\end{equation}
\end{remark}

\begin{remark}
The proof of Theorem \ref{main theorem precise version} shows that \eqref{NQdefinition} holds with any exponent $\nu$ satisfying 
\begin{equation*}
\nu<\frac{(n-1)(n-1-s_0)}{n(1+c_n)+c_n-1}.
\end{equation*} 
\end{remark}

We begin by proving an elementary lemma.

\begin{lemma}\label{dyadic lemma}
For each $g'\in G$, we have
\begin{equation*}
  \#\left\{\g\in N_\G(Q) : \v{\g}{g'}<\frac{2\xi}{Q^2}\right\}=O_{\xi}(\log Q).
\end{equation*}
\end{lemma}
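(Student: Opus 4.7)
The approach is a dyadic decomposition in the norm $\|\gamma\|$. I will partition the set whose cardinality we wish to estimate into $\bigcup_{j=1}^{J}S_j$, where
\begin{equation*}
S_j:=\Bigl\{\gamma\in\G : 2^{j-1}<\|\gamma\|\leq 2^j,\ \v{\g}{g'}<\tfrac{2\xi}{Q^{2}}\Bigr\}
\end{equation*}
and $J=\lceil \log_2 Q\rceil+O(1)$. Since there are only $O(\log Q)$ such ranges, it suffices to establish $\#S_j=O_\xi(1)$ uniformly in $j$ and $g'$.

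The key claim is that any two elements $\gamma_1,\gamma_2\in S_j$ satisfy $\|\gamma_1^{-1}\gamma_2\|=O_\xi(1)$. By the triangle inequality for hyperbolic angles (Proposition \ref{angle-props}\eqref{trig-ineq}) we have $\v{\g_1}{\g_2}\leq \v{\g_1}{g'}+\v{g'}{\g_2}<4\xi/Q^2$. Applying formula \eqref{norm-change} of Proposition \ref{change-under-right-mult} with $g=\gamma_1^{-1}$ and $M=\gamma_2$ (so that $\pi-v=\v{\g_1}{\g_2}$) and using $\cosh a\cosh b-\sinh a\sinh b=\cosh(a-b)$, I obtain
\begin{equation*}
\|\gamma_1^{-1}\gamma_2\|^2=2\cosh(t(\gamma_1)-t(\gamma_2))+2\sinh t(\gamma_1)\sinh t(\gamma_2)\bigl(1-\cos \v{\g_1}{\g_2}\bigr).
\end{equation*}
The dyadic constraint forces $\sinh t(\gamma_i)\leq \|\gamma_i\|^2/2\leq Q^2/2$ and $|t(\gamma_1)-t(\gamma_2)|\leq \log 4+O(1)$, while $1-\cos v\leq v^2/2$ gives $1-\cos\v{\g_1}{\g_2}\leq 8\xi^2/Q^4$. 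Combining these bounds yields $\|\gamma_1^{-1}\gamma_2\|^2=O_\xi(1)$.

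Consequently, if $S_j$ is nonempty, I fix any $\gamma_0\in S_j$; every $\gamma\in S_j$ then lies in $\gamma_0\cdot\{\eta\in\G:\|\eta\|\leq C(\xi)\}$ for a constant $C(\xi)$ independent of $j$, $g'$, and $Q$. By discreteness of $\G$ in $G$ (equivalently, local finiteness of the orbit $\G e_{n+1}$ near $e_{n+1}$), this set is finite of size $O_\xi(1)$. Summing over the $O(\log Q)$ dyadic ranges completes the proof. The only substantive step is the hyperbolic-trigonometric computation showing that elements of $\G$ lying in a common dyadic norm window and in a common narrow angular cone must be mutually close in $G$; once this is in hand, the rest is immediate from discreteness.
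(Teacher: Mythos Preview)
Your proof is correct. Both you and the paper use a dyadic decomposition in $\|\gamma\|$ and then bound each dyadic block by $O_\xi(1)$; the difference lies in how that block bound is obtained. The paper thickens each lattice point to a small ball $\gamma B_{\delta_1}$ (with $\delta$ chosen so that distinct $\gamma$'s give disjoint balls), uses Lemma \ref{first-bounds} to show that the union of these balls over $\gamma$ with $Q_1<\|\gamma\|\leq 2Q_1$ and $\v{\gamma}{g'}<2\xi/Q^2$ lies in the larger cone $\{g:\|g\|<3Q_1,\ \v{g}{g'}<(2\xi+c\delta)/Q_1^2\}$, and then divides the volume of this cone by $\vol{B_{\delta_1}}$ to get $\mathcal L=O(1+\xi^{n-1})$. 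You instead compute directly, via Proposition \ref{change-under-right-mult}, that any two elements $\gamma_1,\gamma_2$ in the same dyadic block satisfy $\|\gamma_1^{-1}\gamma_2\|=O_\xi(1)$, so the block is contained in a single $\Gamma$-translate of $N_\Gamma(C(\xi))$.

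Your route is slightly more elementary: it avoids the ball-packing step and the cone volume estimate, relying only on the norm formula \eqref{norm-change} and discreteness. The paper's route has the advantage of making the $\xi$-dependence explicit (the bound is $O(1+\xi^{n-1})$ per block), which your argument hides inside $\#N_\Gamma(C(\xi))$; but for the purposes of Lemma \ref{dyadic lemma} this is immaterial.
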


\begin{proof}
We let $3<Q_1<Q$ and consider the quantity
\begin{equation*}
\mathcal L:=\#\left\{\g\in \G : Q_1<\norm{\g}\leq 2Q_1, \v{\g}{g'}<\frac{2\xi}{Q^2}\right\}.
\end{equation*}
By choosing a fixed $\delta>0$, depending only on $\G$ and small enough to ensure that $\gamma_1
B_{\delta_1}\cap\gamma_2 B_{\delta_1}=\emptyset$ for all $\gamma_1\neq\gamma_2$ in $\G$ (recall the definition of the ball $B_{\delta_1}$ in \eqref{Bdelta}), we write
\begin{equation*}
\mathcal L=\frac{1}{\vol{B_{\delta_1}}}\sum_{\substack{\g\in\G\\ Q_1<\norm{\g}\leq 2Q_1\\\v{\g}{g'}<2\xi/Q^2}}\vol{\g B_{\delta_1}}.
\end{equation*}
Using Lemma \ref{first-bounds} and Proposition \ref{angle-props},  possibly decreasing the value of $\delta$, we obtain
\begin{equation*}
\mathcal{L}\leq \frac{1}{\vol{B_{\delta_1}}}\textup{vol}\left(\left\{g\in G : \norm{g}<3Q_1, \v{g}{g'}<\frac{2\xi+c\delta}{Q_1^2}\right\}\right),
\end{equation*}
with an absolute constant $c>0$. Estimating the volume in the numerator, we find that
$\mathcal L=O(1+\xi^{n-1})=O_{\xi}(1)$ independently of $g'$ (recall that all our implied constants are allowed to depend on $\G$). Hence, the desired result follows from a dyadic decomposing of the condition $\norm{\g}\leq Q$ (estimating the contribution from elements $\gamma$ satisfying, say, $\norm{\gamma}\leq 6$ trivially). 
\end{proof}

We continue by giving an upper bound on the tail of the sum
\eqref{NQ}. To be more precise, we consider 
\begin{equation*}
  \mathcal{E}_{Q,T}(\xi):=\sum_{\norm{M}\geq T}\#\G\cap \mathcal{R}_M(Q,\xi), 
\end{equation*}
where $0<T=T(Q)<Q$ is a parameter tending to infinity with Q. 

\begin{lemma}\label{error term E}
Fix $\xi$ and let $T<Q$. Then, for $T$ sufficiently large, we have 
\begin{equation*}
\mathcal{E}_{Q,T}(\xi)=O_{\xi}\left(\frac{Q^{2(n-1)}\log Q}{T^{2(n-1)}}\right).
\end{equation*}
\end{lemma}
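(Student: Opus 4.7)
The strategy is to rewrite $\mathcal E_{Q,T}(\xi)$ as a pair count and then extract a hyperbolic-geometry constraint. Substituting $M=\g^{-1}\g'$, and noting that for $T>\sqrt{2}$ the condition $\norm{M}\geq T$ already excludes $M\in K$, we get
$$\mathcal E_{Q,T}(\xi)=\#\bigl\{(\g,\g')\in N_\G(Q)^2 : \v{\g}{\g'}<2\xi/Q^2,\ \norm{\g^{-1}\g'}\geq T\bigr\}.$$
The main observation is that the two conditions --- a very small angle and the very large mutual hyperbolic distance $d(\g e_{n+1},\g'e_{n+1})\geq \cosh^{-1}(T^2/2)$ --- are in tension, and force one of $\norm{\g},\norm{\g'}$ to be $\ll Q/T$.

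To make this quantitative, set $r=d(\g e_{n+1},e_{n+1})$, $r'=d(\g'e_{n+1},e_{n+1})$, $\theta=\v{\g}{\g'}$, and $d=d(\g e_{n+1},\g'e_{n+1})$. The hyperbolic law of cosines combined with $\cos\theta\geq 1-\theta^2/2$ yields
$$\cosh d\leq \cosh(r-r')+\tfrac{\theta^2}{2}\sinh r\sinh r'.$$
Using $\theta<2\xi/Q^2$ and $\sinh r,\sinh r'\leq Q^2/2$, the second term is bounded by $\xi^2/2$. Hence, as soon as $T\geq 2\xi$, the requirement $\cosh d\geq T^2/2$ forces $\cosh(r-r')\geq T^2/4$, so $|r-r'|\geq \cosh^{-1}(T^2/4)$. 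Without loss of generality $r\leq r'$; an elementary estimate then gives
$$\cosh r\leq \cosh\bigl(r'-\cosh^{-1}(T^2/4)\bigr)\ll \cosh r'/T^2\leq Q^2/T^2,$$
so $\norm{\g}=O(Q/T)$.

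By the symmetry $(\g,\g')\leftrightarrow(\g',\g)$ (note $\norm{(\g^{-1}\g')^{-1}}=\norm{\g^{-1}\g'}$ and $\v{\g}{\g'}=\v{\g'}{\g}$) it suffices, up to a factor of two, to bound the pairs with $\norm{\g}=O(Q/T)$. For each such $\g$, Lemma~\ref{dyadic lemma} applied with the fixed point $g'=\g$ gives $\#\{\g'\in N_\G(Q):\v{\g}{\g'}<2\xi/Q^2\}=O_\xi(\log Q)$, and the Lax--Phillips asymptotics (Remark~\ref{LPremark}) yield $\#\{\g\in \G:\norm{\g}=O(Q/T)\}=O((Q/T)^{2(n-1)})$. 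Multiplying produces the claimed bound
$$\mathcal E_{Q,T}(\xi)=O_\xi\!\left(\frac{Q^{2(n-1)}\log Q}{T^{2(n-1)}}\right).$$
The only delicate point --- and precisely what ``$T$ sufficiently large'' is for --- is ruling out the regime in which the angle term alone makes $\cosh d$ large; the trivial bounds $\sinh r,\sinh r'\leq Q^2/2$ combined with $\theta<2\xi/Q^2$ make this automatic once $T\gtrsim \xi$.
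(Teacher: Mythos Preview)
Your proof is correct and follows essentially the same approach as the paper: both rewrite $\mathcal E_{Q,T}(\xi)$ as a pair count, use the hyperbolic law of cosines (the paper via its Proposition~\ref{change-under-right-mult}, you directly) to show that a small angle together with $\norm{\g^{-1}\g'}\geq T$ forces $\min(\norm{\g},\norm{\g'})=O(Q/T)$, and then finish by symmetry, Lemma~\ref{dyadic lemma}, and the Lax--Phillips count. The constants differ slightly ($\sqrt{3}Q/T$ in the paper versus your $O(Q/T)$), but the argument is the same.
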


\begin{proof}
Note that $\mathcal{E}_{Q,T}(\xi)$ equals the cardinality of the set
\begin{equation*}
\mathcal S:= \left\{(\g,\g')\in N_{\G}(Q)^2 : \norm{\g^{-1}\g'}\geq T, \v{\g}{\g'}<\frac{2\xi}{Q^2}\right\}.
\end{equation*}
For $(\gamma,\gamma')\in \mathcal S$ we find, using Proposition \ref{change-under-right-mult} and the relation $\cos(\pi-\v{\g}{\g'})=-1+O(\xi^2/Q^4)$, that
\begin{align*}
T^2\leq \norm{\g^{-1}\g'}^2&=2\big(\cosh t(\g)\cosh t(\g')+\cos(\pi-\v{\g}{\g'})\sinh t(\g)\sinh t(\g')\big)\\
&=2\cosh{(t(\g)-t(\g'))}+O_{\xi}(1).
\end{align*}
Assuming $t(\g)-t(\g')\geq0$, it follows
that for $T$ sufficiently large (depending only on $\xi$), we have
$e^{t(\g)-t(\g')}\geq  T^2/2$ and therefore, since $t(\g)\leq 2\log Q$, we also have
$t(\g')\leq 2\log (Q/T)+\log2$. In particular, we readily have
$\norm{\g'}^2\leq 3\frac{Q^2}{T^2}$. Hence, we conclude
\begin{align*}
\mathcal{E}_{Q,T}(\xi)\leq&2\#\left\{(\g,\g')\in\G^2 : \norm{\g}\leq Q, \norm{\g'}\leq\sqrt 3\frac{Q}{T}, \v{\g}{\g'}<\frac{2\xi}{Q^2}\right\}\\
=&2\sum_{\substack{\g'\in\G\\\norm{\g'}\leq\sqrt 3 Q/T}}\#\left\{\g\in N_\G(Q) : \v{\g}{\g'}<\frac{2\xi}{Q^2}\right\},
\end{align*}
and the result follows immediately from Lemma \ref{dyadic lemma}.
\end{proof}

We are now ready to complete the proof of Theorem \ref{main theorem precise version}.

\begin{proof}[Proof of Theorem \ref{main theorem precise version}]
Recall from \eqref{NQdefinition} that our goal is an asymptotic expansion of $\#N_{2,Q}(\xi)$, with a power saving error term. We introduce a positive parameter $T<Q$ (at the end of the proof we will determine a value of $T$ that balance our error terms; see \eqref{value of T}), and apply Lemma \ref{error term E} to get
\begin{equation*}
\#N_{2,Q}\left(\frac{\xi}{k_{n,\Gamma}}\right)=\sum_{\substack{M\in \G\\ M\notin K, \norm{M}<T}}\#\G\cap \mathcal R_M(Q,\xi)+O_{\xi}\left(\frac{Q^{2(n-1)}\log Q}{T^{2(n-1)}}\right).
\end{equation*}
Applying also Lemma \ref{counts-volumes} and Theorem \ref{volume-theorem} yields
\begin{multline*}
\#N_{2,Q}\left(\frac{\xi}{k_{n,\Gamma}}\right)=\frac{\omega_{n-1}Q^{2(n-1)}}{2^{n-1}\vol{\G\backslash G}}
\sum_{\substack{M\in \G\\ \norm{M}<T}}\int_0^{\xi}f_{\zeta}(t(M))\,d\zeta\\
+O_{\xi}\left(T^{4(n-1)}Q^{2\frac{(n-1)^2}{n+1}}+T^{2(n-1)+b_n}Q^{a_n}+\frac{Q^{2(n-1)}\log Q}{T^{2(n-1)}}\right),
\end{multline*}
where $a_n$ and $b_n$ are as in Lemma \ref{counts-volumes}. Furthermore, using Lemma \ref{stuff}\eqref{stuff-onepointfive}, we find that we may drop the condition $\norm{M}<T$ in the above summation; the error term introduced in this step is subsumed in the error term $O_{\xi}((Q/T)^{2(n-1)}\log Q)$. Thus
\begin{multline}\label{NQestimate}
\#N_{2,Q}\left(\frac{\xi}{k_{n,\Gamma}}\right)=\frac{\omega_{n-1}Q^{2(n-1)}}{2^{n-1}\vol{\G\backslash G}}
\sum_{\substack{M\in \G}}\int_0^{\xi}f_{\zeta}(t(M))\,d\zeta\\
+O_{\xi}\left(T^{4(n-1)}Q^{2\frac{(n-1)^2}{n+1}}+T^{2(n-1)+b_n}Q^{a_n}+\frac{Q^{2(n-1)}\log Q}{T^{2(n-1)}}\right).
\end{multline}
We balance the last two error terms in \eqref{NQestimate} by choosing
\begin{equation}\label{value of T}
T=Q^{\frac{2(n-1)-a_n}{4(n-1)+b_n}},
\end{equation} 
and with this choice of $T$, using also that $c_n>\frac{n(n+1)}{2}$, we readily verify that 
\begin{equation*}
T^{4(n-1)}Q^{2\frac{(n-1)^2}{n+1}}\leq\frac{Q^{2(n-1)}}{T^{2(n-1)}}.
\end{equation*}
Hence we can drop the first error term in \eqref{NQestimate} and we arrive at
\begin{multline}\label{NQresult}\#N_{2,Q}\left(\frac{\xi}{k_{n,\Gamma}}\right)=\frac{\omega_{n-1}Q^{2(n-1)}}{2^{n-1}\vol{\G\backslash G}}
\sum_{\substack{M\in \G}}\int_0^{\xi}f_{\zeta}(t(M))\,d\zeta\\
+O_{\xi,\epsilon}\left(Q^{2(n-1)(\frac{2(n-1)+a_n+b_n}{4(n-1)+b_n})+\epsilon}\right).
\end{multline}
It is now straightforward to verify, using \eqref{pair-correlation-function-intro}, \eqref{R2limit} and Remark \ref{LPremark}, that this confirms the claim in \eqref{NQdefinition} with any exponent $\nu$ satisfying 
\begin{equation*}
\nu<\frac{(n-1)(n-1-s_0)}{n(1+c_n)+c_n-1}.
\end{equation*} 
Since also the remaining part of Theorem \ref{main theorem precise version} follows immediately from \eqref{NQresult}, the proof is complete. 
\end{proof}

\begin{remark}
Let us point out that the proof of Theorem \ref{main theorem precise version} can, with only minimal changes (e.g.\ replacing the use of Lemma \ref{counts-volumes} and Theorem \ref{volume-theorem} by applications of Lemma \ref{counts-volumes-2} and Theorem \ref{volume-theorem-2} respectively), be turned into a proof of Theorem \ref{restricted-main-theorem}. 
\end{remark}

\subsection{Proof of Theorem \ref{main theorem-asymptotics}}

We recall from \eqref{g2formula} that 
\begin{equation}\label{repeatedg_2}
g_2\Big(\frac{\xi}{k_{n,\Gamma}}\Big)=\frac{(n-1)\omega_{n-1}k_{n,\Gamma}}{\omega_n}\sum_{M\in\Gamma}f_{\xi}(t(M)).
\end{equation}
Our main task is to prove the following asymptotic formula:

\begin{lemma}\label{kernellemma}
Let $\epsilon>0$ and let $s_0$ be as in Theorem \ref{main-bound}. Then we have 
\begin{equation*}
\sum_{M\in\Gamma}f_{\xi}(t(M))=\frac{1}{\vol{\G\backslash G}}\int_Gf_{\xi}(t(g))\,dg+O_{\epsilon}\left(\xi^{n-2+\frac{2(s_0-n+1)}{n+1}+\epsilon}\right)
\end{equation*}
as $\xi\to\infty$.
\end{lemma}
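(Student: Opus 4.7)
The strategy parallels the proof of Lemma~\ref{counts-volumes}, with the indicator function $\mathbf{1}_A$ there replaced by the bi-$K$-invariant weight $F(g):=f_\xi(t(g))$. I use the automorphic test function $\Psi_1$ from Section~\ref{testfunctionsection}, built from a spherically symmetric $\psi_1$ supported in $B_{\delta_1}$, with $\delta>0$ a smoothing parameter to be chosen at the end. The argument evaluates
\[
\mathcal T \;:=\; \int_G F(g)\,\langle\pi(g)\Psi_1,\Psi_1\rangle_{\Gamma\backslash G}\,dg
\]
in two different ways and then matches.

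On the spectral side, Corollary~\ref{dirty-estimate-1} (the $K$-invariance of $\Psi_1$ affords the improved exponent $\delta^{-n}$) gives
\[
\mathcal T \;=\; \frac{1}{\vol{\Gamma\backslash G}}\int_G f_\xi(t(g))\,dg + O\!\Big(\delta^{-n}\!\!\int_G f_\xi(t(g))\,\norm{g}^{2(s_0-n+1)}\,dg\Big).
\]
On the geometric side, the unfolding used for Lemma~\ref{counts-volumes} (with $\psi_1$ now playing both roles) yields
\[
\mathcal T \;=\; \int_G\!\!\int_G \psi_1(g_1)\,\psi_1(g_2)\sum_{\gamma\in\Gamma} f_\xi(t(g_1^{-1}\gamma g_2))\,dg_1\,dg_2.
\]
For $g_1,g_2\in B_{\delta_1}$ and $\norm{\gamma}$ bounded away from $1$, Lemma~\ref{fattening-stability} ensures $t(g_1^{-1}\gamma g_2)=t(\gamma)+O(\delta)$. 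Writing $\omega_\delta(l)$ for the oscillation of $f_\xi$ on an $O(\delta)$-neighborhood of $l$, and treating the contribution of the finitely many small-$\norm{\gamma}$ terms trivially, one obtains
\[
\mathcal T \;=\; \sum_{M\in\Gamma} f_\xi(t(M)) + O\!\Big(\sum_{\gamma\in\Gamma}\omega_\delta(t(\gamma))\Big).
\]
Matching these two expressions gives the desired asymptotic formula modulo a matrix-coefficient error $\mathcal E_1$ and a smoothing error $\mathcal E_2$.

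Both errors are estimated via the $KA^+K$ integration formula (Proposition~\ref{KAKmeasure}) together with the explicit description of $f_\xi$ from Section~\ref{vols} across the three regimes $\xi\leq C(l)$, $C(l)<\xi\leq B(l)$, and $B(l)<\xi$, supplemented by the technical results of Appendix~\ref{appendix}. A power-counting analysis, dominated by the transition regime $C<\xi\leq B$, gives $\int_G f_\xi(t(g))\norm{g}^{2(s_0-n+1)}\,dg=O_\epsilon(\xi^{2s_0-n+\epsilon})$, while a crude Lax--Phillips estimate for $\omega_\delta\circ t$, combined with the Lipschitz bound $\omega_\delta(l)\lesssim \delta\,\sup_{|s-l|\leq c\delta}|f_\xi'(s)|$, gives $\sum_{\gamma\in\Gamma}\omega_\delta(t(\gamma))=O(\delta\,\xi^{n-2+\epsilon})$. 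Balancing $\delta^{-n}\xi^{2s_0-n}$ against $\delta\,\xi^{n-2}$ selects $\delta\sim\xi^{2(s_0-n+1)/(n+1)}$ and produces the claimed bound $O_\epsilon(\xi^{n-2+\frac{2(s_0-n+1)}{n+1}+\epsilon})$. The principal difficulty lies in extracting the sharp $\xi$-dependence of these two integrals in the transition regime, where $f_\xi$ takes its most intricate form in terms of the implicit quantities $\alpha$ and $\lambda_\pm$ from~\eqref{DEFOFF}; the regime-by-regime bounds are delicate but are precisely the sort of control collected in Appendix~\ref{appendix}.
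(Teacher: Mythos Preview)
Your approach is essentially identical to the paper's: the same two-sided evaluation of $\mathcal T$, the same unfolding, the same use of Corollary~\ref{dirty-estimate-1} on the spectral side, the same invocation of Theorem~\ref{theoremonasymptotics}\eqref{upperbounds} for $\mathcal E_1$, and the same balancing $\delta=\xi^{2(s_0-n+1)/(n+1)}$.

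There is one imprecision worth flagging in your treatment of $\mathcal E_2$. You write $\omega_\delta(l)\lesssim \delta\sup_{|s-l|\leq c\delta}|f_\xi'(s)|$ and conclude $\sum_\gamma\omega_\delta(t(\gamma))=O(\delta\,\xi^{n-2+\epsilon})$. But $f_\xi'(l)$ has an integrable square-root singularity at $l=l_1(\xi)$ (cf.\ Lemma~\ref{lemmafderivative}, where the bound degenerates as $l\downarrow l_1(\xi)$), so the sup-based Lipschitz bound is not valid there. The paper handles this via Lemma~\ref{perturbationlemma}, which gives $\omega_\delta(l)=O(\delta^{1/2}\xi^{-n})$ rather than $O(\delta\,\xi^{-n})$ in the regime $l_1(\xi)-\delta<l<l_1(\xi)+1$; summing over $\gamma$ with $\|\gamma\|\ll\xi^{1/2}$ then contributes an extra term $\delta^{1/2}\xi^{-1}$ to $\mathcal E_2$. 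Fortunately this term is dominated by $\delta\,\xi^{n-2}$ for the chosen $\delta$, so your final error bound survives; but the intermediate claim as stated does not follow from a pure Lipschitz argument.
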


\begin{proof}
Let $\delta>0$ be a small parameter. As in the proof of Lemma \ref{fattening-stability}, we find that $t(g_1^{-1}Mg_2)-t(M)=O(\delta)$ for all $g_1,g_2\in B_{\delta_1}$. Using this fact, together with Lemma \ref{perturbationlemma} and Remark \ref{LPremark}, we obtain
\begin{align}\label{kernelestimate}
\Big|\sum_{M\in\Gamma}f_{\xi}(t(&g_1^{-1}Mg_2))-\sum_{M\in\Gamma}f_{\xi}(t(M))\Big|
\leq\sum_{M\in\Gamma}\big|f_{\xi}(t(g_1^{-1}Mg_2))-f_{\xi}(t(M))\big|\nonumber\\
&\ll\sum_{\norm{M}\ll\xi^{1/2}}\delta^{1/2}\xi^{-n}+\sum_{\norm{M}\ll\xi}\delta\xi^{-n}+\sum_{\norm{M}\gg\xi}\delta\xi^{n-2}\norm{M}^{-4(n-1)}\\
&\ll\delta^{1/2}\xi^{-1}+\delta\xi^{n-2}+\delta\xi^{-n}\ll\delta^{1/2}\xi^{-1}+\delta\xi^{n-2}.\nonumber
\end{align}

For $\delta>0$ sufficiently small, we again consider the spherically symmetric test function $\psi_1$ (with support contained in $B_{\delta_1}$) and the corresponding $\G$-automorphic function $\Psi_1$ defined in Section \ref{testfunctionsection}. We recall in particular that $\int_G\psi_1(g)\,dg=1$ and that $\|\Psi_1\|\asymp\delta^{-n/2}$. Using the functions $\psi_1$ and $\Psi_1$, together with the estimate \eqref{kernelestimate}, we get
\begin{align}\label{rightlefthandside}
\sum_{M\in\Gamma}f_{\xi}(t(M))=\Big\langle\sum_{M\in\Gamma}f_{\xi}(t(g_1^{-1}Mg_2)), \Psi_1\otimes \Psi_1(g_1,g_2)\Big\rangle_{\G\backslash G\times\G\backslash G }\\
+O\big(\delta^{1/2}\xi^{-1}+\delta\xi^{n-2}\big).\nonumber
\end{align}
Unfolding the summation in $\sum_{M\in\Gamma}f_{\xi}(t(g_1^{-1}Mg_2))$, making the change of variables $g=g_1^{-1}g_2$, and applying Corollary \ref{dirty-estimate-1} and Theorem \ref{theoremonasymptotics}\eqref{upperbounds}, we find that
\begin{align}\label{righthandsideestimate}
\Big\langle\sum_{M\in\Gamma}&f_{\xi}(t(g_1^{-1}Mg_2)),\Psi_1\otimes \Psi_1(g_1,g_2)\Big\rangle_{\G\backslash G\times\G\backslash G }\\
&=\int_Gf_{\xi}(t(g))\langle\pi(g)\Psi_1,\Psi_1\rangle_{\G\backslash G}\,dg\nonumber\\
&=\frac1{\vol{\G\backslash G}}\int_Gf_{\xi}(t(g))\,dg+O\left(\delta^{-n}\int_Gf_{\xi}(t(g))\norm{g}^{2(s_0-n+1)}\,dg\right)\nonumber\\
&=\frac1{\vol{\G\backslash G}}\int_Gf_{\xi}(t(g))\,dg+O_{\epsilon}\left(\delta^{-n}\xi^{2s_0-n+\epsilon}\right),\nonumber
\end{align} 
where again $\pi$ denotes the right regular representation on $G$. We balance the error terms in \eqref{rightlefthandside} and \eqref{righthandsideestimate} by choosing
\begin{equation*}
\delta=\xi^{\frac{2(s_0-n+1)}{n+1}},
\end{equation*}
and arrive at the asymptotic formula
\begin{align*}
\sum_{M\in\Gamma}f_{\xi}(t(M))=\frac1{\vol{\G\backslash G}}\int_Gf_{\xi}(t(g))\,dg+O_{\epsilon}\left(\xi^{n-2+\frac{2(s_0-n+1)}{n+1}+\epsilon}\right),
\end{align*}
which is the desired result.
\end{proof}

We are now in position to finish the proof of Theorem \ref{main theorem-asymptotics}.

\begin{proof}[Proof of Theorem \ref{main theorem-asymptotics}]
Using \eqref{repeatedg_2}, Lemma \ref{kernellemma} and Theorem \ref{theoremonasymptotics}\eqref{asymptotics}, we find that
\begin{align*}
g_2\Big(\frac{\xi}{k_{n,\Gamma}}\Big)=\frac{\omega_{n-1}k_{n,\G}}{(n-1)\vol{\G\backslash G}}\xi^{n-2}+O_{\epsilon}\left(\xi^{-1+\epsilon}+\xi^{n-4}+\xi^{n-2+\frac{2(s_0-n+1)}{n+1}+\epsilon}\right).
\end{align*}
Hence
\begin{align}\label{g_2finalasymptotics}
g_2(\xi)&=\frac{\omega_{n-1}k_{n,\G}^{n-1}}{(n-1)\vol{\G\backslash G}}\xi^{n-2}+O_{\epsilon}\left(\xi^{-1+\epsilon}+\xi^{n-4}+\xi^{n-2+\frac{2(s_0-n+1)}{n+1}+\epsilon}\right)\\
&=(n-1)\xi^{n-2}+O_{\epsilon}\left(\xi^{n-2+\frac{2(s_0-n+1)}{n+1}+\epsilon}\right).\nonumber
\end{align}
Finally, noting that the condition specifying $s_0$ is an open condition, we find that the conclusion in \eqref{g_2finalasymptotics} holds also with $s_0$ replaced by any smaller number still admissible in the statement of Theorem \ref{main-bound}. In particular we can, by allowing the implied constant to depend on $s_0$, drop the $\epsilon$ in the above error term. This finishes the proof.
\end{proof}

\subsection{Proof of Theorem \ref{g_2at0theorem}}

 Kelmer and Kontorovich  prove in \cite[p.\ 8]{KelmerKontorovich:2013} that in the case $n=2$ the pair correlation density tends to the strictly positive value
\begin{equation*}
g_2(0)=\frac{\vol{\G\backslash G}}{\pi}\sum_{\substack{M\in\G\\ t(M)>0}}\frac{1}{e^{2t(M)}-1}
\end{equation*}
as $\xi\to0$ (see also \cite[Eq.\
(1.3)]{BocaPopaZaharescu:2013})\footnote{Note that we get the same
  value for $g_2(0)$ as Kelmer and Kontorovich even though our
  normalization of the pair correlation function is slightly different
  from the one in \cite{KelmerKontorovich:2013}.}. Theorem
\ref{g_2at0theorem} now follows directly from Theorem \ref{main
  theorem} and Lemma \ref{stuff}\eqref{stuff-onepointfive}.

\section{Geometric and spectral information contained in $g_2$}\label{friday-afternoon}

For a lattice $\G=\{M_i : i\in \N\}\cup\{I\}\subseteq G$, we call the sequence  
$$0=t(I)<t(M_1)\leq t(M_2)\leq t(M_3)\leq \ldots$$
the \emph{lattice length spectrum} of $\G$ (not to be confused with the length spectrum). By \eqref{LP-intro}, this set determines $n$ and $\vol{\G\backslash G}$ and, by Theorem \ref{main theorem}, it therefore also determines the pair correlation density function.  

 In fact, the opposite is also true. Given a pair correlation density
 function $g_2(\xi)$  and a volume $\vol{\Gamma\backslash G}$, we can
 find the lattice length spectrum as follows: We find
 $n$ from Theorem \ref{main theorem-asymptotics}. By Lemma
 \ref{ob}, we know that $g_2$ is non-differentiable precisely at the points
 $2k_n^{-1}\sinh(t(M)/2)$ and $k_n^{-1}\sinh(t(M))$ (here $M$ runs through the nontrivial elements of $\G$). The smallest value
 of $\xi$ for which $g_2(\xi)$ is non-differentiable will therefore
 determine $t(M_1)$. Subtracting the term in $g_2$ coming from $t(M_1)$, we repeat the process and find $t(M_2)$, $t(M_3)$, etc.  

For $\G$ a \emph{uniform} lattice, the lattice length spectrum is related to
the spectrum of the automorphic Laplacian in the following way. Let 
$$0=\lambda_0<\lambda_1\leq \lambda_2\leq \ldots$$ 
denote the eigenvalues of $-\Delta$ and let $\{\varphi_j\}$
be a (fixed) corresponding complete orthonormal set of eigenfunctions. For $\lambda\geq 0$, we set 
$$B(\lambda):=\sum_{j:\lambda_j=\lambda}\abs{\varphi_j(e_{n+1})}^2.$$
We call the set 
$$\{(\lambda_j,B(\lambda_j)) : B(\lambda_j)\neq 0\}$$ 
the \emph{pre-spectrum of $\Gamma$ at $e_{n+1}$}. Then, using
Selberg's pre-trace formula (see, e.g., \cite[Ch.\ XI
Sect.\ 2]{Chavel:1984a}) in a way similar to the one in the proof of  Huber's theorem  (see \cite[Thm.\ 9.2.9]{Buser:1992a}), one shows that the pre-spectrum determines and is
determined by the lattice length spectrum: If two uniform hyperbolic
lattices have the same pre-spectrum, then the right-hand sides of their pre-trace formulas will be the same for every choice of test function, and if
they have the same lattice length spectra, then the
left-hand sides of their pre-trace formulas will be the same for every choice of test function.

We summarize the above discussion as follows:

\begin{theorem} Two uniform hyperbolic lattices in $G$ have the same pair
  correlation densities and covolumes if and only if they have the
  same lattice length spectra if and only if they have the same pre-spectra at $e_{n+1}$. 
\end{theorem}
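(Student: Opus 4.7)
The plan is to prove the two biconditionals separately. Both are essentially worked out in the discussion preceding the theorem, so the task is mainly to organize these observations and to make the non-trivial direction of each equivalence precise.

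\textbf{First equivalence: (same $g_2$ and covolume) $\Leftrightarrow$ (same lattice length spectrum).} Assume first that the lattice length spectra agree. The Lax--Phillips counting formula \eqref{LP-intro} shows that the counting function $\#\{M\in\G : t(M)\leq T\}$ determines both $n$ (from the exponential growth rate) and $\vol{\G\backslash G}$ (from the leading constant); hence the lattice length spectrum determines $n$ and the covolume. Given the covolume we recover $k_{n,\G}$, and the explicit formula
\begin{align*}
g_2(\xi)=\frac{(n-1)\omega_{n-1}k_{n,\Gamma}}{\omega_n}\sum_{M\in\Gamma}f_{\xi k_{n,\Gamma}}(t(M))
\end{align*}
from Theorem \ref{main theorem precise version} produces $g_2$. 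Conversely, given $g_2$ and $\vol{\G\backslash G}$, Theorem \ref{main theorem-asymptotics} reads off $n$ from the leading asymptotic $g_2(\xi)\sim(n-1)\xi^{n-2}$, and then $k_{n,\G}$ is known. Applying Lemma \ref{ob} (in the appendix), the singular set of $g_2$ in $(0,\infty)$ is precisely
\begin{align*}
\{2k_{n,\G}^{-1}\sinh(t(M)/2),\;k_{n,\G}^{-1}\sinh(t(M)) : M\in\G\setminus\{I\}\}.
\end{align*}
The smallest element of this set determines $t(M_1)$; subtracting the corresponding term $\frac{(n-1)\omega_{n-1}k_{n,\G}}{\omega_n}f_{\xi k_{n,\G}}(t(M_1))$ and iterating recovers $t(M_2), t(M_3),\ldots$ one after another.

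\textbf{Second equivalence: (same lattice length spectrum) $\Leftrightarrow$ (same pre-spectrum at $e_{n+1}$).} This is an application of Selberg's pre-trace formula. For a suitable radial test function $h$ with spherical inverse transform $k$, evaluation of the pre-trace formula at $z=e_{n+1}$ yields
\begin{align*}
\sum_{j\geq 0}B(\lambda_j)\,h(r_j)=\sum_{M\in\G}k(t(M)),
\end{align*}
where $r_j$ parametrizes $\lambda_j$ in the usual way. Since the pre-spectrum records exactly the numbers $B(\lambda_j)$ (only over those $\lambda_j$ with $B(\lambda_j)\neq 0$), the left-hand side depends solely on the pre-spectrum, and the right-hand side depends solely on the lattice length spectrum. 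If two uniform lattices share a pre-spectrum, then the two right-hand sides coincide for every admissible $k$; approximating indicator functions of intervals $[l-\epsilon,l+\epsilon]$ and letting $\epsilon\to0$ then forces the multisets $\{t(M) : M\in\G\}$ to coincide. The reverse implication is analogous, with the roles of the two sides swapped.

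\textbf{Main obstacle.} The technically most delicate step is the extraction of the lattice length spectrum from $g_2$. One must verify that the singularity structure described by Lemma \ref{ob} is actually visible, i.e.\ that distinct $t(M)$ really do produce distinct break points and that these are not accidentally cancelled when summed against the multiplicities coming from $\#\{M : t(M)=t(M_i)\}$; this requires that $f_{\xi k_{n,\G}}$ is smooth away from its known singular values and that the series converges well enough to interchange the order of analysis and summation. Convergence is guaranteed by Lemma \ref{stuff}, which controls the tail of the series uniformly in $\xi$ on compact subsets of $(0,\infty)$ avoiding the singular set. Once these points are checked, the iterative extraction procedure is unambiguous and the three statements in the theorem are equivalent.
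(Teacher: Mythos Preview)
Your proposal is correct and follows essentially the same approach as the paper: the discussion in Section~\ref{friday-afternoon} preceding the theorem is precisely the argument you have formalized, including the use of \eqref{LP-intro} and Theorem~\ref{main theorem precise version} for one direction of the first equivalence, Theorem~\ref{main theorem-asymptotics} and Lemma~\ref{ob} with iterated subtraction for the converse, and the pre-trace formula (in the spirit of Huber's theorem) for the second equivalence. Your added remarks on why the extraction procedure is well-defined (non-cancellation of singularities, tail control via Lemma~\ref{stuff}) make explicit points the paper leaves to the reader.
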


\begin{appendices}
\section{Appendix}\label{appendix}

In this appendix, we derive several elementary but essential properties of the
function $f_\xi(l)$ defined in \eqref{DEFOFF}.

\subsection{Bounds on $f_\xi$}
We recall that $A=A(l)=\cosh l$, $B=B(l)=\sinh l$ and $C=C(l)=2\sinh(l/2)$.

\begin{lemma}\label{stuff}
Let $\epsilon>0$ and let $l_0>0$ be fixed. We have, uniformly in $\xi\in\R^+$,
\begin{enumerate}[(i)]
 \item\label{stuff-one}$f_\xi(l)=O(\xi^{-n}(1+l))$,
 \item \label{stuff-onepointfive} $f_\xi(l)=O(\xi^{n-2}B^{-2(n-1)})$ uniformly in $l\geq l_0$ if $\xi\leq C$,
 \item \label{morestuff}$f_\xi(l)=O_{\epsilon}(B^{-n/2+\epsilon})$ uniformly in $l\geq l_0$.
 \item \label{stuff-two} For fixed $\xi>0$, the function $f_\xi(l)$ extends continuously to $l=0$ with value $0$.  
\end{enumerate}
\end{lemma}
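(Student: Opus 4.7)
My proof begins with the change of variable $u = 1+y$, under which the kernel becomes $(y + \coth l)^{-(n-1)} = (u + \epsilon)^{-(n-1)}$ with $\epsilon := \coth l - 1 = e^{-l}/B$. This reveals that the behavior near $y = -1$ is governed by whether $u$ is smaller or larger than $\epsilon$, and all four parts will follow by careful bookkeeping. I will also use the symmetric estimates $(1-y^2)^{n-2} \leq 2^{n-2}(1+y)^{n-2}$ near $y = -1$ and $(1-y^2)^{n-2} \leq 2^{n-2}(1-y)^{n-2}$ near $y = 1$.

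Part (iv) is immediate: for fixed $\xi > 0$, as $l \to 0^+$ one has $\coth l \to \infty$, so $(y+\coth l)^{-(n-1)} \leq (\coth l - 1)^{-(n-1)} \to 0$ uniformly in $y \in [-1,1]$, and since $|I(\xi,l)| \leq 2$ the integral tends to zero. For part (i), I enlarge $I(\xi,l) \subseteq [-1,1]$ and reduce to bounding $\int_0^2 u^{n-2}(u+\epsilon)^{-(n-1)}\,du$. Splitting at $u = \epsilon$: the range $0 \leq u \leq \epsilon$ contributes $O(1)$ using $(u+\epsilon)^{-(n-1)} \leq \epsilon^{-(n-1)}$, while the range $\epsilon \leq u \leq 2$ contributes $O(\log(1/\epsilon)) = O(1+l)$ since $1/\epsilon = Be^l$. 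This gives $f_\xi(l) = O(\xi^{-n}(1+l))$.

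For part (ii), when $\xi \leq C$ both $(\alpha,1]$ and $[-1,\lambda_-)$ are short. The estimate $1-\alpha = \xi^2/[B^2(1+\alpha)] = O(\xi^2/B^2)$ is immediate. For the left piece, rationalizing the expression for $\lambda_-$ gives the key identity
\begin{equation*}
\lambda_- + 1 = \frac{\xi^2 e^{-l}}{B(A+\sqrt{B^2-\xi^2})(B+\sqrt{B^2-\xi^2})},
\end{equation*}
which for $l \geq l_0$ and $\xi \leq C$ (where $\sqrt{B^2-\xi^2} \geq A-1$) yields the sharper bound $\eta := \lambda_- + 1 = O(\xi^2/B^4)$. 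Using the vanishing factors $(1-y)^{n-2}$ on $(\alpha,1]$ and $(1+y)^{n-2}$ on $[-1,\lambda_-)$, together with $\eta \ll \epsilon$ (so that $(u+\epsilon)^{-(n-1)} \leq \epsilon^{-(n-1)}$ on the relevant $u$-range), a direct integration shows each piece contributes at most $O(\xi^{2(n-1)}/B^{2(n-1)})$ to the integral; after dividing by $\xi^n$, this is exactly $O(\xi^{n-2} B^{-2(n-1)})$.

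Part (iii) is the main obstacle and I will split it into three $\xi$-regimes. For $\xi \leq C$, part (ii) gives $f_\xi(l) = O(C^{n-2} B^{-2(n-1)}) = O(B^{-3n/2+1}) \leq O(B^{-n/2})$, using $C^2 = 2(A-1) = O(B)$. For $\xi \geq B$, part (i) gives $f_\xi(l) \leq O(B^{-n}(1+l)) = O_\epsilon(B^{-n+\epsilon})$. The remaining range $C < \xi \leq B$ contains a third component of $I(\xi,l)$, the middle interval $(\lambda_+,-\alpha)$, which must be handled separately. The two outer pieces are bounded exactly as in (ii), each contributing at most $O(\xi^{n-2}/B^{2(n-1)}) \leq O(B^{-n}) \leq O(B^{-n/2})$. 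For the middle piece, applying $(1-y^2)^{n-2} \leq 2^{n-2}(1+y)^{n-2}$ and the substitution $u = 1+y$ reduces the contribution to $\xi^{-n}\int u^{n-2}(u+\epsilon)^{-(n-1)}\,du = O(\xi^{-n}(1+l))$, by the same splitting at $u = \epsilon$ used in (i). Since $C \gtrsim B^{1/2}$ uniformly for $l \geq l_0$, this is $O(B^{-n/2}(1+l)) = O_\epsilon(B^{-n/2+\epsilon})$, which completes the proof.
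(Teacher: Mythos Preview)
Your proof is correct and follows the same overall strategy as the paper, but with a somewhat different technical execution that is worth noting.

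For part (i), the paper expands $(1-y^2)^{n-2} = \bigl((1-\tfrac{A}{B})+(y+\tfrac{A}{B})\bigr)^{n-2}$ binomially and integrates term by term, obtaining the logarithmic term from the $m=0$ summand. Your substitution $u=1+y$ with the splitting at $u=\epsilon$ is more direct and avoids the binomial sum entirely; this is a cleaner route to the same bound. For part (ii), both proofs hinge on the same two facts, $1-\alpha=O(\xi^2/B^2)$ and $1+\lambda_-=O(\xi^2/B^4)$; your rationalized identity for $1+\lambda_-$ is exact and in fact valid for all $\xi\le B$ (the parenthetical $\sqrt{B^2-\xi^2}\ge A-1$ is not actually needed, only $A+\sqrt{B^2-\xi^2}\ge A\gtrsim B$), which is why you can legitimately reuse it for the outer pieces in part (iii). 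One small remark: the clause ``$\eta\ll\epsilon$'' is stronger than required and not quite true in general (only $\eta=O(\epsilon)$ holds when $\xi$ is near $C$), but the inequality $(u+\epsilon)^{-(n-1)}\le\epsilon^{-(n-1)}$ you use holds for all $u\ge 0$ regardless, so the argument is unaffected.

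For part (iii), your three-regime split is correct but over-engineered: the paper simply observes that for all $\xi\ge C$, part (i) already gives $f_\xi(l)=O(\xi^{-n}(1+l))=O_\epsilon(B^{-n/2+\epsilon})$ since $C\gtrsim B^{1/2}$, without any need to single out the middle interval $(\lambda_+,-\alpha)$. Your separate treatment of that interval just re-derives (i) on a subset of $[-1,1]$ and then applies the same $\xi\ge C$ bound, so the extra case contributes no new information. Part (iv) is handled identically in both proofs.
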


\begin{proof}
To prove \eqref{stuff-one}, we notice the following general bound:
\begin{align*}
  f_\xi(l)&\leq
  \frac{2^{n-2}}{\xi^n}\int_{-1}^1\frac{(1-A/B+(y+A/B))^{n-2}}{(y+A/B)^{n-1}}\,dy\\
&=\frac{2^{n-2}}{\xi^n}\sum_{m=0}^{n-2}\binom{n-2}{m}(1-A/B)^m\int_{-1}^1(y+A/B)^{-m-1}\,dy.
\end{align*}
Computing the integral, we see that 
\begin{equation*}
(1-A/B)^m\int_{-1}^1(y+A/B)^{-m-1}\,dy=
\begin{cases}
O(1)&\textrm{if $m>0$,}\\
O(l)& \textrm{if }m=0,
\end{cases}
\end{equation*} from which the claim follows.

We next bound $f_\xi(l)$ when $\xi\leq C$. Since in particular $\xi<B$, we
may use that
\begin{equation}\label{backtoschool}
\alpha=1+O(\xi^2/B^2)
\end{equation} 
and that, for $0\leq \alpha\leq y\leq 1$, we have $(1-y^2)=(1+y)(1-y)\leq 2(1-\alpha)$. This implies 
\begin{align*}
 \xi^{-n}\int_\alpha^1\frac{(1-y^2)^{n-2}}{(y+A/B)^{n-1}}\,dy&=O\left(\xi^{-n}(1-\alpha)^{n-2}\int_\alpha^1dy\right)\\
 &=O\big(\xi^{n-2}/B^{2(n-1)}\big).
\end{align*}
It remains to analyze 
$\xi^{-n}\int_{-1}^{\lambda_-}\frac{(1-y^2)^{n-2}}{(y+A/B)^{n-1}}dy$. A small computation using
\begin{equation*}
\alpha=1-\frac{1}{2}\frac{\xi^2}{B^2}+O\Big(\frac{\xi^4}{B^4}\Big),\quad \frac{A}{B}=1+\frac{1}{2B^2}+O\Big(\frac{1}{B^4}\Big)
\end{equation*}
shows that 
\begin{equation}\label{enplus}
  1+\lambda_-=O\Big(\frac{\xi^2}{B^4}\Big). 
\end{equation}
It follows that
\begin{align*}
\xi^{-n}&\int_{-1}^{\lambda_-}\frac{(1-y^2)^{n-2}}{(y+A/B)^{n-1}}\,dy\\
&\leq\frac{2^{n-2}}{\xi^n(A/B-1)^{n-1}}\int_{-1}^{\lambda_-}(1+y)^{n-2}\,dy\\
&=O\left(\frac{(1+\lambda_-)^{n-1}}{\xi^n(A/B-1)^{n-1}}\right)=O\left(\frac{\xi^{n-2}}{B^{2(n-1)}}\right),
\end{align*}
where in the last line we have used $l\geq l_0$ to conclude that
$(A/B-1)^{-1}=O(B^{2})$. This proves \eqref{stuff-onepointfive}.

To prove \eqref{morestuff}, we use \eqref{stuff-one} and $C^2=2(A-1)$ to conclude that
when $C\leq \xi$  we have
$f_\xi(l)=O_{\epsilon}(B^{-n/2+\epsilon})$. Furthermore, we use
\eqref{stuff-onepointfive} to see that when $\xi\leq C$ we have
$f_\xi(l)=O(B^{-3n/2+1})$, which is even better.

Finally, to prove \eqref{stuff-two}, we note that
\begin{equation*}
0\leq  f_\xi(l)\leq\frac{B^{n-1}}{\xi^{n}}\int_{-1}^{1}\frac{(1-y^2)^{n-2}}{(By+A)^{n-1}}\,dy\leq
  \frac{2B^{n-1}}{\xi^{n}(A-B)^{n-1}}= \frac{2^{2-n}(e^{2l}-1)^{n-1}}{\xi^n},
\end{equation*}
from which the claim follows.
\end{proof}

\begin{remark}\label{remarkaboutstuff}
We note that Lemma \ref{stuff}~\eqref{morestuff} and \eqref{stuff-two}, together with the discreteness of $\G$, imply that
$f_\xi(t(M))=O_{\epsilon}(\norm{M}^{-n+\epsilon})$ for $M\in \G$. 
\end{remark}

\subsection{Asymptotics}

In this section, we analyze $\int_Gf_\xi(t(g))\,dg$.

\begin{theorem}\label{theoremonasymptotics}
Let $\epsilon>0$. We have, for $\xi\geq 1$,
\begin{enumerate}[(i)]
\item \label{asymptotics}\quad   $\displaystyle \int_Gf_\xi(t(g))\,dg=\frac{\omega_n}{(n-1)^2}\xi^{n-2}+O_{\epsilon}\big(\xi^{-1+\epsilon}+\xi^{n-4}\big),$
\item  \label{upperbounds} \quad  $\displaystyle\int_Gf_\xi(t(g))\norm{g}^{-\alpha}\,dg=O_{\epsilon}\big(\xi^{n-2-\alpha+\epsilon}\big),$
\end{enumerate}
for $0\leq \alpha\leq 2(n-1) $.
\end{theorem}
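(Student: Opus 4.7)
The plan is to apply Proposition \ref{KAKmeasure} and reduce both statements to one-dimensional integrals in the Cartan parameter $l$, then analyze the resulting integrals using the piecewise description of the domain $I(\xi,l)$ from \eqref{set-I}. Since $f_\xi(t(g))$ and $\norm{g}$ depend only on $t(g)$ and are $K$-bi-invariant, Proposition \ref{KAKmeasure} gives
\begin{equation*}
\int_G f_\xi(t(g))\norm{g}^{-\alpha}\,dg=\omega_n\int_0^\infty f_\xi(l)(\sinh l)^{n-1}(2\cosh l)^{-\alpha/2}\,dl,
\end{equation*}
with $\alpha=0$ for part \eqref{asymptotics}. I would then split the $l$-integral into three ranges corresponding to the three regimes in \eqref{set-I}: Region~1, $l\in[0,L_1]$ with $\sinh L_1=\xi$; Region~2, $l\in[L_1,L_2]$ with $2\sinh(L_2/2)=\xi$; and Region~3, $l\geq L_2$.

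For Region~3, the bound $f_\xi(l)=O(\xi^{n-2}B^{-2(n-1)})$ from Lemma \ref{stuff}\eqref{stuff-onepointfive}, together with $B\asymp\xi^2/2$ at $l=L_2$ and $dl\asymp dB/B$, yields a contribution of size $O(\xi^{-n-\alpha})$. For Region~1, the estimate $f_\xi(l)=O(\xi^{-n}(1+l))$ from Lemma \ref{stuff}\eqref{stuff-one}, together with $B\leq\xi$ throughout and $L_1=O(\log\xi)$, gives a contribution of size $O_\epsilon(\xi^{-1-\alpha+\epsilon})$. Both of these are well within the error budget allowed in parts \eqref{asymptotics} and \eqref{upperbounds}.

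The main work is in Region~2, which carries the leading $\xi^{n-2-\alpha}$ term. I would substitute $r=\xi/\sinh l$, so $r$ decreases from $1$ at $l=L_1$ to $\asymp 2/\xi$ at $l=L_2$, with $dl\asymp -dr/r$. Splitting the $y$-integration defining $f_\xi(l)$ into the three sub-intervals $[-1,\lambda_-)$, $(\lambda_+,-\alpha)$, $(\alpha,1]$ and using the expansions
\begin{equation*}
\frac{A}{B}-1=\frac{r^2}{2\xi^2}+O(r^4/\xi^4),\qquad \alpha=\sqrt{1-r^2},
\end{equation*}
one shows that the two boundary sub-intervals each contribute $O(r^{2(n-1)})$ to $\xi^n f_\xi(l)$, giving only $O_\epsilon(\xi^{-1-\alpha+\epsilon})$ in total; whereas the middle sub-interval, after the substitution $y=-1+2z$, contributes to leading order $2^{n-1}\log(r\xi/2)$. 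Inserting this and substituting $u=r\xi/2$ reduces the leading-order calculation to
\begin{equation*}
\omega_n\cdot 2^{\alpha/2}\xi^{n-2-\alpha}\int_1^{\xi/2}(\log u)\,u^{\alpha/2-n}\,du,
\end{equation*}
which I would evaluate by integration by parts. For part \eqref{asymptotics} ($\alpha=0$) this yields the main term $\omega_n\xi^{n-2}/(n-1)^2$ together with a boundary remainder of size $O_\epsilon(\xi^{-1+\epsilon})$; for part \eqref{upperbounds} the same reduction yields directly the upper bound $O_\epsilon(\xi^{n-2-\alpha+\epsilon})$, which is all that is needed.

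The main obstacle will be the careful bookkeeping of approximation errors in Region~2. In particular, the error from replacing the factor $(1-z)^{n-2}$ by $1$ in the middle sub-interval generates a contribution of order $\xi^{-1}\int_{2/\xi}^{1}r^{2-n}\,dr$, which is $O_\epsilon(\xi^{-1+\epsilon})$ for $n\leq 3$ but becomes the dominant $O(\xi^{n-4})$ term for $n\geq 4$; this is precisely the source of the second summand in the error in part \eqref{asymptotics}. Verifying that the various error terms from the expansions of $\alpha$, $A/B$, and $\lambda_\pm$, together with the boundary contributions of the integration by parts, combine cleanly to $O_\epsilon(\xi^{-1+\epsilon}+\xi^{n-4})$ in part \eqref{asymptotics} and to $O_\epsilon(\xi^{n-2-\alpha+\epsilon})$ uniformly for $0\leq\alpha\leq 2(n-1)$ in part \eqref{upperbounds} is the delicate part of the argument.
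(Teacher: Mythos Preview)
Your proposal is correct and follows essentially the same route as the paper: the same $KAK$ reduction, the same three-region split at $l_1,l_2$ (your $L_1,L_2$), the same use of Lemma~\ref{stuff} for the outer regions, and extraction of the dominant logarithmic contribution in Region~2 followed by integration by parts (the paper organizes this last step via a binomial expansion of $(1-y^2)^{n-2}$ in powers of $(y+A/B)$ rather than your substitution $y=-1+2z$, but this is merely a different bookkeeping device for the same computation, and your identification of the source of the $\xi^{n-4}$ error is correct). Two harmless slips: the Region~1 bound is $O_\epsilon(\xi^{-1-\alpha/2+\epsilon})$ rather than $O_\epsilon(\xi^{-1-\alpha+\epsilon})$, and the factor $2^{\alpha/2}$ in your displayed integral is spurious; neither affects the argument.
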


\begin{proof}
We let $l_1=l_1(\xi)\leq  l_2=l_2(\xi)$ be defined by $B(l_1)=C(l_2)=\xi$. By Proposition \ref{KAKmeasure} we have, for $\alpha\geq 0$, that
\begin{equation*}
\int_Gf_\xi(t(g))\norm{g}^{-\alpha}\,dg=\omega_n2^{-\alpha/2}\int_0^\infty f_\xi(l)A^{-\alpha/2}B^{n-1}\,dl.
\end{equation*}
Using Lemma \ref{stuff}\eqref{stuff-one}, we see that 
\begin{equation*}
  \int_0^{l_2}f_\xi(l)A^{-\alpha/2}B^{n-1}\,dl=O_{\epsilon}\big(\xi^{n-2-\alpha+\epsilon}\big).
\end{equation*}
Also from Lemma \ref{stuff}\eqref{stuff-one}, we find $\int_{0}^{l_1}f_\xi(l)B^{n-1}\,dl=O_{\epsilon}(\xi^{-1+\epsilon}).$
Furthermore, Lemma \ref{stuff}\eqref{stuff-onepointfive} gives
\begin{equation*}
  \int_{l_2}^\infty f_\xi(l)A^{-\alpha/2}B^{n-1}\,dl=O\left(\xi^{n-2}\int_{l_2}^\infty B^{-(n-1+\alpha/2)}\,dl\right)=O(\xi^{-n-\alpha}).
\end{equation*}
Collecting the above bounds, we immediately arrive at the claim in \eqref{upperbounds}.

To get the asymptotics in \eqref{asymptotics}, we must understand $f_\xi(l)$ better in the regime $l_1\leq l\leq l_2$. By the proof of Lemma \ref{stuff}\eqref{stuff-onepointfive}, we see again that 
\begin{equation*}
\frac{1}{\xi^n}\int_{[-1,\lambda_-)\cup(\alpha,1]}\frac{(1-y^2)^{n-2}}{(y+A/B)^{n-1}}\,dy=O\big(\xi^{n-2}/B^{2(n-1)}\big),
\end{equation*}
and integrating this times $B^{n-1}$ from $l_1$ to $l_2$ we get the bound $O(\xi^{-1})$. Combining this with the above bounds gives
\begin{equation*}
\int_Gf_\xi(t(g))\,dg=\omega_n\int_{l_1}^{l_2}
\xi^{-n}\int_{\lambda_+}^{-\alpha}\frac{(1-y^2)^{n-2}}{(y+\frac{A}{B})^{n-1}}\,dy
B^{n-1}dl+O_{\epsilon}\big(\xi^{-1+\epsilon}\big).
\end{equation*}

We now analyze 
$\xi^{-n}\int_{\lambda_+}^{-\alpha}\frac{(1-y^2)^{n-2}}{(y+\frac{A}{B})^{n-1}}\,dy$. 
Using the identity
\begin{equation*}
  1-y^2=\big((1+\tfrac{A}{B})-(y+\tfrac{A}{B})\big)\big((1-\tfrac{A}{B})+(y+\tfrac{A}{B})\big)
\end{equation*}
and writing $n'=n-2$, we find that
\begin{align}\label{2timesbinomial}
  &(1-y^2)^{n'}=\sum_{j_1,j_2=0}^{n'}\binom{n'}{j_1}\binom{n'}{j_2}\big(1+\tfrac{A}{B}\big)^{n'-j_1}\big(1-\tfrac{A}{B}\big)^{j_2}(-1)^{j_1}\big(y+\tfrac{A}{B}\big)^{n'+j_1-j_2}\\
&=\sum_{j_1,j_2=0}^{n'}\!\!\sum_{k=0}^{n'-j_1}\!\!\binom{n'}{j_1}\binom{n'}{j_2}\binom{n'-j_1}{k}2^{n'-j_1-k}\big(1-\tfrac{A}{B}\big)^{j_2+k}(-1)^{j_1+k}\big(y+\tfrac{A}{B}\big)^{n'+j_1-j_2},\nonumber
\end{align}
so we investigate integral expressions of the form
\begin{equation*}
  \frac{1}{\xi^n}\big(1-\tfrac{A}{B}\big)^{j_2+k}\int_{\lambda_+}^{-\alpha}\big(y+\tfrac{A}{B}\big)^{j_1-j_2-1}\,dy.
\end{equation*}
A trivial upper bound for these expressions is
\begin{align*}
\frac{B^{-2(j_2+k)}}{\xi^n}&\int_{-1}^0\big(y+\tfrac{A}{B}\big)^{j_1-j_2-1}\,dy\\ 
& \ll\frac{B^{-2(j_2+k)}}{\xi^n}\begin{cases}1 &\textrm{ if }j_1-j_2-1\geq 0,\\
|\log(A/B-1)| &\textrm{ if }j_1-j_2-1=-1,\\
\big(\frac{A}{B}-1\big)^{j_1-j_2} &\textrm{ if }j_1-j_2-1<-1,
\end{cases}
\end{align*}
which gives
\begin{align*}
\int_{l_1}^{l_2}&\frac{1}{\xi^n}\big(1-\tfrac{A}{B}\big)^{j_2+k}\int_{\lambda_+}^{-\alpha}\big(y+\tfrac{A}{B}\big)^{j_1-j_2-1}\,dyB^{n-1}dl\\
&\ll_{\epsilon}\xi^{-n}\begin{cases}\int_{l_1}^{l_2}B^{-2(j_2+k)+n-1}\,dl &\textrm{ if }j_1-j_2-1\geq 0,\\
\int_{l_1}^{l_2}B^{-2(j_2+k)+n-1+\epsilon}\,dl &\textrm{ if }j_1-j_2-1=-1,\\
\int_{l_1}^{l_2}B^{-2(j_2+k)-2(j_1-j_2)+n-1}\,dl &\textrm{ if }j_1-j_2-1<-1,
\end{cases}
\\ 
&\ll\begin{cases}
\xi^{-2(j_2+k)-1+\epsilon}+\xi^{-4(j_2+k)+n-2+\epsilon}&\textrm{ if }j_1-j_2-1\geq-1, \\
\xi^{-2(j_1+k)-1}+\xi^{-4(j_1+k)+n-2}&\textrm{ if }j_1-j_2-1<-1,
\end{cases}
\end{align*}
where we have used that $B(l_1)=\xi$ and $B(l_2)^2=\frac{\xi^4}{4}+\xi^2$ (which follows from $C(l_2)=\xi$).
We notice that the above expression is $O(\xi^{n-4})$ unless
$j_1-j_2-1\geq -1$ and $j_2+k=0$ (which is the case $k=j_2=0$ and $j_1=0,\ldots,
n-2$ ), or $j_1-j_2-1< -1$ and $j_1+k=0$ (which is the case $k=j_1=0$ and $j_2=1,\ldots,
n-2$).

We now find a slightly less trivial bound when $k=j_2=0$ and $j_1=1,\ldots,
n-2$. We have, since $-\alpha+\frac{A}{B}$ and
$\lambda_++\frac{A}{B}$ are bounded from above, that
\begin{align*}
\frac{1}{\xi^n}&\int_{\lambda_+}^{-\alpha}\big(y+\tfrac{A}{B}\big)^{j_1-1}\,dy=
\frac{1}{j_1\xi^n}\left(\big(-\alpha+\tfrac{A}{B}\big)^{j_1}-\big(\lambda_++\tfrac{A}{B}\big)^{j_1}\right)\\
&=\frac{1}{j_1\xi^n}\big(-\alpha+\tfrac{A}{B}-\big(\lambda_++\tfrac{A}{B}\big)\big)\sum_{v=0}^{j_1-1}\left(-\alpha+\tfrac{A}{B}\right)^v\left(\lambda_++\tfrac{A}{B}\right)^{j_1-1-v}\\
&=O\left(\frac{\abs{\lambda_++\alpha}}{\xi^n}\right).
\end{align*}
We observe that 
\begin{align*}
\lambda_++\alpha&=\frac{-\xi^2\frac{A}{B}+\sqrt{1-\frac{\xi^2}{B^2}}+(\xi^2+1)\sqrt{1-\frac{\xi^2}{B^2}}}{\xi^2+1}\\
&=\frac{\xi^2}{1+\xi^2}\bigg(\sqrt{1-\frac{\xi^2}{B^2}}-\frac{A}{B}\bigg)+\frac{2}{\xi^2+1}\sqrt{1-\frac{\xi^2}{B^2}}\\
&=\frac{\xi^2}{1+\xi^2}\big(1+O(\xi^2/B^2)-1+O(B^{-2})\big)+O(\xi^{-2})\\
&=O\big(\xi^2/B^2+B^{-2}+\xi^{-2}\big)
\end{align*}
 and hence
\begin{align*}
\int_{l_1}^{l_2}&\frac{1}{\xi^n}\int_{\lambda_+}^{-\alpha}\big(y+\tfrac{A}{B}\big)^{j_1-1}\,dy B^{n-1}dl \\
&=O\left(\xi^{-n}\int_{l_1}^{l_2}\big(\xi^{2} B^{n-3}+ B^{n-3}+\xi^{-2}B^{n-1}\big)\,dl\right)=O_\epsilon\big(\xi^{n-4}+\xi^{-1+\epsilon}\big).
\end{align*}

We next consider the contribution when $k=j_1=0$ and $j_2=1,\ldots,
n-2$. We find that
\begin{align*}
\frac{1}{\xi^n}&\big(1-\tfrac{A}{B}\big)^{j_2}\int_{\lambda_+}^{-\alpha}\big(y+\tfrac{A}{B}\big)^{-j_2-1}\,dy\\
&=\frac{1}{-j_2\xi^n}\big(1-\tfrac{A}{B}\big)^{j_2}\left(\big(-\alpha+\tfrac{A}{B}\big)^{-j_2}-\big(\lambda_++\tfrac{A}{B}\big)^{-j_2}\right)\\
&=O\left(\frac{B^{-2j_2}}{\xi^n}\left(\frac{B^{2j_2}}{\xi^{2j_2}}+\xi^{2j_2}\right)\right)=O\left(\xi^{-n-2j_2}+\xi^{2j_2-n}B^{-2j_2}\right),
\end{align*}
where we have used 
\begin{equation*}
\lambda_++A/B=\frac{\sqrt{1-\xi^2/B^2}+A/B}{\xi^2+1}\geq
\frac{1}{\xi^2+1}\geq \tfrac12\xi^{-2}, 
\end{equation*}
 and 
\begin{align}\nonumber
  A/B-\alpha&=\frac{A-B\sqrt{1-\xi^2/B^2}}{B}=\frac{A-B+B(1-\sqrt{1-\xi^2/B^2})}{B}\\
&\geq \frac{A-B}{B}+\frac{\xi^2}{2B^2}\geq \frac{\xi^2}{2B^2}\label{yetanotherbound}
\end{align}
(which holds since $1-\sqrt{1-x^2}\geq x^2/2$). Hence
\begin{align*}
&\int_{l_1}^{l_2}\frac{1}{\xi^n}\big(1-\tfrac{A}{B}\big)^{j_2}\int_{\lambda_+}^{-\alpha}\big(y+\tfrac{A}{B}\big)^{-j_2-1}\,dy B^{n-1}dl\\
&=O\left(\int_{l_1}^{l_2}\left(\xi^{-n-2j_2}B^{n-1}+\xi^{2j_2-n}B^{n-2j_2-1}\right)\,dl\right)=O_\epsilon\left(\xi^{n-2-2j_2}+\xi^{-1+\epsilon}\right). 
\end{align*}

The only term we have not analyzed so far is the one with $j_1=j_2=k=0$
corresponding to the function
\begin{equation*}
h_\xi(l)=  \frac{2^{n-2}}{\xi^n}\int_{\lambda_+}^{-\alpha}\frac{1}{y+\frac{A}{B}}\,dy=\frac{2^{n-2}}{\xi^{n}}\log\left(\frac{-\alpha+\frac AB}{\lambda_++\frac AB}\right).
\end{equation*}
We notice that 
\begin{equation*}
\frac{-\alpha+\frac
  A B}{\lambda_++\frac
  A B}=(\xi^2+1)\frac{-\alpha+\frac
  A B}{\alpha+\frac
  A B}=\frac{(\xi^2+1)^2}{(A+B\alpha)^2}
\end{equation*} 
and it follows that, for $l_1\leq l\leq l_2$, we have
$h_\xi(l)=O(\frac{l+\log(\xi)}{\xi^n})$. Using this fact, together with $A-B=O(B^{-1})$, we see that 
\begin{align*}
\int_{l_1}^{l_2}&h_\xi(l)B^{n-1}\,dl=\int_{l_1}^{l_2}h_\xi(l)B^{n-2}A\,dl+O_{\epsilon}\big(\xi^{n-5+\epsilon}\big)\\
&=\left.h_\xi(l)\frac{B^{n-1}}{n-1}\right |_{l_1}^{l_2}-\frac{1}{n-1}\int_{l_1}^{l_2}h'_\xi(l)B^{n-1}\,dl+O_{\epsilon}\big(\xi^{n-5+\epsilon}\big)\\
&=h_\xi(l_2)\frac{B^{n-1}(l_2)}{n-1}+\frac{2^{n-1}}{(n-1)\xi^n}\int_{l_1}^{l_2}\frac{B^{n}}{\sqrt{B^2-\xi^2}}\,dl+O_{\epsilon}\big(\xi^{n-5+\epsilon}+\xi^{-1+\epsilon}\big).
\end{align*}
Using $B(l_2)^2=\frac{\xi^4}{4}+\xi^2$, 
we see that $A+B\alpha |_{l_2}=1+\xi^2$,
from which it follows that $h_\xi(l_2)=0$. We conclude that 
\begin{align*}
\int_{l_1}^{l_2}h_\xi(l)B^{n-1}\,dl&=\frac{2^{n-1}}{(n-1)\xi^n}\int_{l_1}^{l_2}\frac{B^{n}}{\sqrt{B^2-\xi^2}}\,dl+O_{\epsilon}\big(\xi^{n-5+\epsilon}+\xi^{-1+\epsilon}\big).
\end{align*}

To compute the last integral, we first notice that for any $m\geq 2$, we have
\begin{align*}
\int_{l_1}^{l_2}\frac{B^{m}}{\sqrt{B^2-\xi^2}}\,dl&\ll
B^{m-2}(l_2)\int_{l_1}^{l_2}\frac{AB}{\sqrt{B^2-\xi^2}}\,dl\\
&\ll \left.\xi^{2(m-2)}\sqrt{B^2-\xi^2}\right|_{l_1}^{l_2}\ll \xi^{2(m-1)}.
\end{align*}
It follows that
\begin{align*}
&\frac{1}{\xi^n}\int_{l_1}^{l_2}\frac{B^{n}}{\sqrt{B^2-\xi^2}}\,dl=\frac{1}{\xi^n}\int_{l_1}^{l_2}\frac{B^{n-1}A}{\sqrt{B^2-\xi^2}}\,dl+O\big(\xi^{n-4}\big)\\
&=\left.\frac{B^{n-2}}{\xi^n}\sqrt{B^2-\xi^2}\right|_{l_1}^{l_2}-\frac{(n-2)}{\xi^n}\int_{l_1}^{l_2}B^{n-3}A\sqrt{B^2-\xi^2}\,dl+O\big(\xi^{n-4}\big)\\
&=\left.\frac{B^{n-2}}{\xi^n}\sqrt{B^2-\xi^2}\right|_{l_1}^{l_2}-\frac{(n-2)}{\xi^n}\int_{l_1}^{l_2}B^{n-2}A\,dl+O\left(\xi^{2-n}\int_{l_1}^{l_2}B^{n-4}A\,dl\right)+O\big(\xi^{n-4}\big)\\
&=\frac{\xi^{n-2}}{2^{n-1}}-\frac{n-2}{n-1}\frac{\xi^{n-2}}{2^{n-1}}+O_{\epsilon}\big(\xi^{-1+\epsilon}+\xi^{n-4}\big)=\frac{1}{2^{n-1}(n-1)}\xi^{n-2}+O_{\epsilon}\big(\xi^{-1+\epsilon}+\xi^{n-4}\big).
\end{align*}
 This finally allows us to conclude that 
\begin{equation*}
  \int_Gf_\xi(t(g))\,dg=\frac{\omega_n}{(n-1)^2}\xi^{n-2}+O_{\epsilon}\big(\xi^{-1+\epsilon}+\xi^{n-4}\big),
\end{equation*}
which completes the proof.
\end{proof}

\subsection{Bounds on the derivative of $f_\xi$}

\begin{lemma}\label{lemmafderivative}

For any  $0< \delta \leq 1$, any $\xi\geq 1$ and any $l> 0$, we have
\begin{equation*}
f'_\xi(l)\ll\begin{cases}
\xi^{-n}&\textrm{ if }l<  l_1(\xi),\\
\xi^{-n}\delta^{-1/2}&\textrm{ if }  l_1(\xi)+\delta\leq l<l_1(\xi)+1,\\
\xi^{-n}&\textrm{ if  } l_1(\xi)+1\leq l< l_2(\xi),\\
\xi^{n-2}B^{-2(n-1)}&\textrm{ if  } l_2(\xi)<l.\\
\end{cases}
\end{equation*}
\end{lemma}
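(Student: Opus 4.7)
The plan is to apply Leibniz's rule to differentiate $f_\xi(l)$ with respect to $l$, treating each of the four regimes separately. Writing $F(y, l) = (1-y^2)^{n-2}(y + \coth l)^{-(n-1)}$, one computes $\partial_l F(y,l) = (n-1)B^{-2}(1-y^2)^{n-2}(y+A/B)^{-n}$ using $\partial_l (A/B) = -1/B^2$. Since the endpoints $\pm\alpha, \lambda_\pm$ of $I(\xi, l)$ depend on $l$, Leibniz's rule will produce a bulk integral together with boundary terms of the form $\pm\xi^{-n}F(y^*, l)\,dy^*/dl$. The preliminary computations I need are $d\alpha/dl = \xi^2 A/(\alpha B^3)$ and $d\lambda_\pm/dl = (\xi^2/B^2 \pm d\alpha/dl)/(\xi^2+1)$, together with simplifications at the endpoints coming from the identities recorded in the proof of Theorem~\ref{volume-theorem}: $B\sqrt{1-\alpha^2}=\xi$ gives $1-\alpha^2 = \xi^2/B^2$, and $B\sqrt{1-\lambda_\pm^2}=\xi(A+B\lambda_\pm)$ yields the compact formula $F(\lambda_\pm, l) = \xi^{2(n-2)}(\lambda_\pm + A/B)^{n-3}$.

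In Case 1 ($l < l_1$, so $B < \xi$ and $I = [-1, 1]$ has constant endpoints), there will be no boundary contribution. I would substitute $u = y + A/B$ and then $s = u - (A/B -1)$ to reduce the bulk to a constant times $B^{-2}\int_0^2 s^{n-2}(2-s)^{n-2}(s+\beta)^{-n}\,ds$ with $\beta = A/B - 1 = 1/(B(A+B))$. A rescaling $s = \beta t$ recognises this as a bounded Beta-type integral, giving $O(\beta^{-1})$ when $\beta \leq 2$ and $O(\beta^{-n})$ when $\beta \geq 2$. Combined with the factor $B^{-2}$, this yields $f'_\xi(l) = O(\xi^{-n}(A+B)/B)$ when $l$ is bounded below (so $(A+B)/B = O(1)$), and $f'_\xi(l) = O(\xi^{-n}B^{n-2}(A+B)^n) = O(\xi^{-n})$ when $l$ is small (where $B^{n-2}$ absorbs the blow-up of $(A+B)/B$).

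For Cases 2 and 3 I would set $\eta = l - l_1$ and use the identity $\alpha^2 = (B-\xi)(B+\xi)/B^2$ together with $B - \xi \asymp \xi\eta$ and $B+\xi \asymp \xi$ to get $\alpha \asymp \sqrt\eta$ (for $\eta \leq 1$) and hence $d\alpha/dl \asymp 1/\sqrt\eta$. A short computation gives $1+\lambda_\pm \asymp 1/\xi^2$, so $\lambda_\pm + A/B \asymp 1/\xi^2$, and thus $F(\lambda_\pm, l) \asymp \xi^2$ and $|d\lambda_\pm/dl| \asymp 1/(\xi^2\sqrt\eta)$, while $F(\pm\alpha, l) = O(1)$. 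Each of the four boundary contributions is therefore $O(\xi^{-n}/\sqrt\eta)$, and the bulk is $O(\xi^{-n})$ by the same substitution as in Case 1 (now with $B \asymp \xi$). In Case 2 this gives $O(\xi^{-n}/\sqrt\delta)$; in Case 3 I would instead use that $\eta \geq 1$ forces $\alpha$ bounded below, and verify that every contribution then reduces to $O(\xi^{-n})$ using the uniform bound $\lambda_\pm + A/B = O(1/\xi^2)$.

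For Case 4 ($l > l_2$, so $B \gg \xi$) I would Taylor-expand $A/B = 1 + 1/(2B^2) + O(1/B^4)$ and $\alpha = 1 - \xi^2/(2B^2) + O(\xi^4/B^4)$ to obtain $1+\lambda_- \asymp \xi^2/B^4$ and $\lambda_- + A/B \asymp 1/B^2$, giving $F(\lambda_-, l) \asymp \xi^{2(n-2)}B^{6-2n}$ and $F(\alpha, l) \asymp (\xi^2/B^2)^{n-2}$. The subtle point will be that $d\lambda_-/dl \asymp \xi^2/B^4$ rather than the naive $\xi^2/B^2$: this requires tracking a cancellation between $\xi^2/B^2$ and $d\alpha/dl \asymp \xi^2/B^2$ to subleading order. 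With these estimates both non-trivial boundary contributions come out at size $\xi^{n-2}/B^{2(n-1)}$. The bulk I would bound by observing that $(\alpha, 1]$ has length $O(\xi^2/B^2)$ with integrand $O((\xi^2/B^2)^{n-2})$, while $[-1, \lambda_-)$ has length $O(\xi^2/B^4)$ but integrand reaching $O((\xi^2/B^4)^{n-2} B^{2n})$; both produce $O(\xi^{n-2}/B^{2(n-1)})$. The main obstacle throughout will be the careful bookkeeping of near-cancellations in the endpoint derivatives, most acutely the subleading cancellation in $d\lambda_-/dl$ in Case 4 and the precise tracking of the $\sqrt\eta$ singularity in Case 2 needed to match the stated $\delta^{-1/2}$ loss.
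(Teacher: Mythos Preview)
Your proposal is correct and follows essentially the same route as the paper: differentiate $f_\xi(l)$ via Leibniz's rule, then bound the resulting bulk integral and the boundary terms coming from the $l$-dependent endpoints $\alpha,\lambda_\pm$, treating the four regimes separately and tracking the $\alpha^{-1}\asymp\eta^{-1/2}$ singularity near $l_1$ and the subleading cancellation in $\lambda_-'$ for $l>l_2$. The only tactical differences are that the paper bounds the bulk by a binomial expansion of $(1+y)^{n-2}$ (rather than your Beta-type substitution) and bounds $F_1(\lambda_\pm,l)$ crudely by $2^{n-2}/(\lambda_-+A/B)$ (rather than via your exact identity $F(\lambda_\pm,l)=\xi^{2(n-2)}(\lambda_\pm+A/B)^{n-3}$); both choices lead to the same final estimates.
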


\begin{proof}
Let, for $-1\leq x\leq1$,
$$F(x,l)=\int_{-1}^x\frac{\left(1-y^2\right)^{n-2}}{\left(y+\frac AB\right)^{n-1}}\,dy.$$ 
The partial derivatives satisfies
\begin{align*}
\frac{\partial F}{\partial x}(x,l)&=F_1(x,l)=\frac{\left(1-x^2\right)^{n-2}}{\left(x+\frac AB\right)^{n-1}},\\
\frac{\partial F}{\partial l}(x,l)&=F_2(x,l)=\frac{n-1}{B^2}\int_{-1}^x\frac{\left(1-y^2\right)^{n-2}}{\left(y+\frac AB\right)^{n}}\,dy.
\end{align*}
We notice that $F_2$ is bounded. Indeed, we have
\begin{align*}
F_2(x,l)&\leq \frac{2^{n-2}(n-1)}{B^2}\int_{-1}^1\frac{\left(1+y\right)^{n-2}}{\left(y+\frac AB\right)^{n}}\,dy\nonumber\\
&=\frac{2^{n-2}(n-1)}{B^2}\sum_{k=0}^{n-2}\binom{n-2}{k}\big(1-\tfrac{A}{B}\big)^k\int_{-1}^1{\left(y+\tfrac AB\right)^{-(k+2)}}\,dy\nonumber\\
&=\frac{2^{n-2}(n-1)}{B}\sum_{k=0}^{n-2}\binom{n-2}{k}\frac{(B-A)^k}{k+1}\left((A+B)^{k+1}-(A-B)^{k+1}\right),\nonumber
\end{align*}
which is $O(1)$ since $\frac{(B-A)^k}{B}\left((A+B)^{k+1}-(A-B)^{k+1}\right)$ is uniformly bounded. Since we have $f'_\xi(l)=\xi^{-n}F_2(1,l)$ for $l< l_1(\xi)$, we immediately get the claimed bound in this region.

For $l> l_2(\xi)$, we have 
\begin{equation*}
  f'_\xi(l)=\xi^{-n}\left(F_1(\lambda_-,l)\lambda_-'+F_2(\lambda_-,l)-F_1(\alpha,l)\alpha'+F_2(x,l)\vert_{x=\alpha}^{x=1}\right).
\end{equation*}
A small computation shows that
\begin{equation}\label{derivatives}
\alpha'=\frac{\xi^2}{B^2}\frac{A/B}{\alpha},\quad
\lambda_{\pm}'=\frac{\frac{\xi^2}{B^2}\big(1\pm \frac{A/B}{\alpha}\big)}{1+\xi^2}.
\end{equation}
Since $\xi$ is bounded away from zero and $l>l_2(\xi)$, we see that
also $l$ is bounded away from zero; in particular $A/B$ is bounded. In addition,
we have $\xi^2<C^2=2(A-1)$. Combining these facts, we see that $ \alpha=\sqrt{1-\xi^2/B^2}$ is bounded away from
zero and that $\xi^2=O(B)$. Using also \eqref{backtoschool}
and the relation $AB^{-1}-1\asymp B^{-2}$, it follows that when $l>l_2(\xi)$ 
\begin{equation*}
\lambda_{-}'=O\left(B^{-2}\big(1-A/B+A/B(1-1/\alpha)\big)\right)=O\left(\frac{\xi^2}{B^{4}}\right).
\end{equation*}
Next, using \eqref{enplus}, we see that 
\begin{equation*}
F_1(\lambda_-,l)=O\left(\frac{(1+\lambda_-)^{n-2}}{(\frac{A}{B}-1)^{n-1}}\right)=O\left(\frac{\xi^{2(n-2)}}{B^{2(n-3)}}\right),
\end{equation*}
and it follows that 
\begin{equation}\label{bound1}
\xi^{-n}F_1(\lambda_-,l)\lambda_-' 
=O\left(\frac{\xi^{n-2}}{B^{2(n-1)}}\right).
\end{equation}

To bound $\xi^{-n}F_2(\lambda_-,l)$, we observe that
\begin{align*}
F_2(\lambda_-,l)&\leq
\frac{2^{n-2}(n-1)}{B^2(\frac{A}{B}-1)^n}\int_{-1}^{\lambda_-}(1+y)^{n-2}\,dy\\
&=O\big(B^{2n-2}(1+\lambda_-)^{n-1}\big)
=O\left(\frac{\xi^{2(n-1)}}{B^{2(n-1)}}\right),
\end{align*}
where again we have used \eqref{enplus}. It follows that 
\begin{equation}\label{bound2}
\xi^{-n}F_2(\lambda_-,l)=O\left(\frac{\xi^{n-2}}{B^{2(n-1)}}\right).
\end{equation}

We can bound $\xi^{-n}F_1(\alpha,l)\alpha'$ by using \eqref{backtoschool} to get 
\begin{equation*}
F_1(\alpha,l)\leq (1-\alpha^2)^{n-2}
=O\left(\frac{\xi^{2(n-2)}}{B^{2(n-2)}}\right).
\end{equation*}
Combining this with $\alpha'=O(\frac{\xi^2}{B^2})$, we find that
\begin{equation}\label{bound3}  
\xi^{-n}F_1(\alpha,l)\alpha'=O\left(\frac{\xi^{n-2}}{B^{2(n-1)}}\right).
\end{equation}
Finally, again using \eqref{backtoschool}, we see that               
\begin{align*}
F_2&(x,l)\vert_{x=\alpha}^{x=1}=\frac{n-1}{B^2}\int_\alpha^1\frac{(1-y^2)^{n-2}}{(y+\frac{A}{B})^n}\,dy\\
&=O\left(\frac{1}{B^2}\int_\alpha^1(1-y)^{n-2}\,dy\right)=O\left(\frac{(1-\alpha)^{n-1}}{B^2}\right)=O\left(\frac{\xi^{2(n-1)}}{B^{2n}}\right),
\end{align*}
and it follows that 
\begin{equation}\label{bound4}
\xi^{-n}F_2(x,l)\vert_{x=\alpha}^{x=1}=O\left(\frac{\xi^{n-2}}{B^{2n}}\right).
\end{equation}
Combining \eqref{bound1}, \eqref{bound2}, \eqref{bound3}, and
\eqref{bound4}, we get the desired bound when $l> l_2(\xi)$.

For $l_1(\xi)<l< l_2(\xi)$, we have
\begin{align*}
f'_\xi(l)=\xi^{-n}\Big(F_1(\lambda_-,l)\lambda_-'&+F_2(\lambda_-,l)-F_1(-\alpha,l)\alpha'-F_1(\lambda_+,l)\lambda_+'\\
&+F_2(x,l)\vert_{x=\lambda_+}^{x=-\alpha}-F_1(\alpha,l)\alpha'+F_2(x,l)\vert_{x=\alpha}^{x=1}\Big).
\end{align*}
Since $F_2(x,l)$ is bounded, all terms involving $F_2(x,l)$ are
$O(\xi^{-n})$. We also observe that 
\begin{equation*}
F_1(x,l)\leq \frac{2^{n-2}}{x+\frac{A}{B}}.
\end{equation*}
It follows, using \eqref{yetanotherbound}, that $F_1(\pm\alpha,l)=O((A/B-\alpha)^{-1})=O(\frac{B^2}{\xi^2})$. In the same way we see, using
\begin{align*}
\lambda_-+\tfrac{A}{B}=\frac{\frac{A}{B}-\sqrt{1-{\frac{\xi^2}{B^2}}}}{\xi^2+1}&\geq\frac{1-\sqrt{1-\frac{\xi^2}{B^2}}}{\xi^2+1}\geq \frac{\xi^2}{2B^2(\xi^2+1)},
\end{align*}
that $F_1(\lambda_\pm,l)=O((\lambda_-+\frac{A}{B})^{-1})=O(B^2)$. This finishes the proof once we observe, using \eqref{derivatives}, that \begin{equation*}
\alpha' \ll\begin{cases}
\frac{\xi^2}{\sqrt \delta B^2}&\textrm{ if } l_1(\xi)+\delta\leq l<l_1(\xi)+1,\\
\frac{\xi^2}{ B^2}& \textrm{ if }l_1(\xi)+1\leq l<l_2(\xi),
\end{cases}
\end{equation*}
\begin{equation*}
\lambda_{\pm}' \ll\begin{cases}
\frac{1}{\sqrt \delta B^2}&\textrm{ if } l_1(\xi)+\delta \leq l<l_1(\xi)+1,\\
\frac{1}{ B^2}& \textrm{ if }l_1(\xi)+1\leq l<l_2(\xi).
\end{cases}
\end{equation*}
Here we have used the fact that for $l_1(\xi)+\delta\leq l<l_2(\xi)$, we have $B\geq
\xi e^{\delta}$, from which it follows that $\alpha^{-1}=O(\delta^{-{1/2}})$. In particular, when $l\geq l_1(\xi)+1$, we have $B\geq\xi e$, from which it follows that $\alpha^{-1}$ is bounded.
\end{proof}

Next, we use Lemma \ref{lemmafderivative} to determine how $f_{\xi}(l)$ behaves under small changes in the variable $l$. 

\begin{lemma}\label{perturbationlemma}
For any sufficiently small $\delta>0$, any sufficiently large $\xi$, and any $l,l'>0$ satisfying $|l-l'|<\delta$, we have
\begin{equation*}
|f_{\xi}(l)-f_{\xi}(l')|\ll
\begin{cases}
\delta\xi^{-n}& \text{if $l\leq l_1(\xi)-\delta$},\\
\delta^{1/2}\xi^{-n}& \text{if $l_1(\xi)-\delta<l<l_1(\xi)+1$},\\
\delta\xi^{-n}& \text{if $l_1(\xi)+1\leq l<l_2(\xi)+\delta$},\\
\delta\xi^{n-2}B^{-2(n-1)}& \text{if $l_2(\xi)+\delta\leq l$}.
\end{cases}
\end{equation*}
\end{lemma}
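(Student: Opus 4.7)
The plan is to bound $|f_\xi(l) - f_\xi(l')|$ via the fundamental theorem of calculus, using the pointwise derivative estimates supplied by Lemma \ref{lemmafderivative}. In three of the four regimes the derivative of $f_\xi$ is uniformly controlled on the segment $[\min(l,l'),\max(l,l')]$, so a direct application of the mean value theorem gives the stated bound. The middle regime, which contains the critical value $l_1(\xi)$, requires a separate treatment.

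For $l \leq l_1(\xi) - \delta$ the entire segment lies in the region $\{s < l_1(\xi)\}$ where $f'_\xi(s) \ll \xi^{-n}$, so MVT gives $\delta\xi^{-n}$ immediately. For $l_1(\xi) + 1 \leq l < l_2(\xi) + \delta$ the segment sits in the union of $[l_1(\xi)+1, l_2(\xi)]$ (where $f'_\xi \ll \xi^{-n}$) and $(l_2(\xi),\infty)$ (where $f'_\xi \ll \xi^{n-2}B^{-2(n-1)}$); since $B(l_2(\xi)) \asymp \xi^2$, the second bound is $\ll \xi^{-3n+2}$ near $l_2(\xi)$ and is dominated by the first, so MVT again yields $\delta\xi^{-n}$. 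For $l \geq l_2(\xi) + \delta$ the segment lies entirely above $l_2(\xi)$; since $\delta \leq 1$ gives $B(s) \asymp B(l)$ uniformly on $[l-\delta, l]$, integrating $f'_\xi(s) \ll \xi^{n-2}B(s)^{-2(n-1)}$ produces the claimed $\delta\xi^{n-2}B^{-2(n-1)}$.

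The singular regime $l_1(\xi) - \delta < l < l_1(\xi) + 1$ is the main obstacle. Here Lemma \ref{lemmafderivative} only gives $f'_\xi(l) \ll \xi^{-n}\delta^{-1/2}$ and the derivative is genuinely unbounded as one approaches $l_1(\xi)$ from above. The key observation is that Lemma \ref{lemmafderivative} is valid for \emph{any} admissible auxiliary parameter in $(0,1]$; applying it pointwise with the parameter $s - l_1(\xi)$ yields the sharper estimate
\[
f'_\xi(s) \ll \xi^{-n}(s - l_1(\xi))^{-1/2} \qquad \text{for } l_1(\xi) < s < l_1(\xi) + 1.
\]
This singularity is integrable, so for $l_1(\xi) < l < l' < l_1(\xi) + 1$ one obtains
\[
|f_\xi(l) - f_\xi(l')| \leq C\xi^{-n}\int_l^{l'}(s-l_1(\xi))^{-1/2}\,ds = 2C\xi^{-n}\bigl(\sqrt{l'-l_1(\xi)} - \sqrt{l-l_1(\xi)}\bigr) \leq 2C\xi^{-n}\sqrt{l'-l},
\]
using $\sqrt{a} - \sqrt{b} \leq \sqrt{a-b}$ for $a\geq b\geq 0$. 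This yields $\ll \delta^{1/2}\xi^{-n}$. When $l,l'$ straddle $l_1(\xi)$ one inserts $l_1(\xi)$ as an intermediate point and combines this estimate with the (sharper) bound $\delta\xi^{-n}$ coming from below $l_1(\xi)$; when both lie below $l_1(\xi)$ the bound $\delta\xi^{-n} \leq \delta^{1/2}\xi^{-n}$ is immediate.

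The heart of the argument is thus the integrated square-root singularity of $f'_\xi$ at $l_1(\xi)^+$: a derivative bound which would lose a factor $\delta^{-1/2}$ under naive MVT in fact \emph{gains} a factor $\delta^{1/2}$ once one exploits its integrability. This is the source of the weaker $\delta^{1/2}$ (rather than $\delta$) appearing in the middle regime of the lemma.
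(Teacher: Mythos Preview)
Your proof is correct, and in the three non-critical regimes it proceeds exactly as the paper does (mean value theorem together with Lemma~\ref{lemmafderivative}). One small point you glossed over: in the third case the segment $[\min(l,l'),\max(l,l')]$ may dip slightly below $l_1(\xi)+1$, but your own pointwise bound $f'_\xi(s)\ll\xi^{-n}(s-l_1(\xi))^{-1/2}$ gives $f'_\xi(s)\ll\xi^{-n}$ there since $s-l_1(\xi)\geq 1-\delta$, so the conclusion stands.

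Your treatment of the critical regime near $l_1(\xi)$ is genuinely different from the paper's. The paper first invokes the monotonicity of $f_\xi$ (increasing on $(0,l_1(\xi)]$ and decreasing on $[l_1(\xi),l_1(\xi)+3\delta]$) to reduce the problem to the single difference $f_\xi(l_1(\xi)+3\delta)-f_\xi(l_1(\xi))$, and then estimates this difference by expanding the defining integral of $f_\xi$ directly, splitting into three pieces $I_1,I_2,I_3$ and bounding each by hand. Your route---noting that Lemma~\ref{lemmafderivative} with floating auxiliary parameter yields the pointwise estimate $f'_\xi(s)\ll\xi^{-n}(s-l_1(\xi))^{-1/2}$ and then integrating this integrable singularity---is shorter and avoids both the monotonicity assertion and the explicit computation. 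The paper's approach has the minor advantage of never needing to justify the improper integral $\int_{l_1(\xi)}^{l'}f'_\xi(s)\,ds$; in your argument this is handled by continuity of $f_\xi$ at $l_1(\xi)$ together with dominated convergence, which is routine but worth stating.
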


\begin{proof}
To prove the claim, we first observe that all estimates except for (part of) the second one follows directly from Lemma \ref{lemmafderivative} and the mean value theorem. It remains to study the case $l_1(\xi)-\delta<l<l_1(\xi)+2\delta$. Here we use the rough bound
\begin{equation}\label{theroughbound}
|f_{\xi}(l)-f_{\xi}(l')|\leq|f_{\xi}(l_1(\xi))-f_{\xi}(l_1(\xi)-2\delta)|+|f_{\xi}(l_1(\xi)+3\delta)-f_{\xi}(l_1(\xi))|,
\end{equation} 
 valid for $\delta$  sufficiently small. This follows from $f_\xi$
 being increasing in $l\leq l_1(\xi)$ and decreasing for
 $l\in[l_1(\xi),l_1(\xi)+3\delta]$ with $\delta$ sufficiently small.  It follows from Lemma \ref{lemmafderivative} that the first term on the right-hand side of \eqref{theroughbound} is bounded by $O(\delta\xi^{-n})$. 

For the rest of the proof, the symbols $A$, $B$, $\alpha$ and $\lambda_{\pm}$ will always be evaluated at $(\xi,l)=(\xi,l_1(\xi)+3\delta)$. In addition, we set $A'=A(l_1(\xi))$ and $B'=B(l_1(\xi))$. It follows readily from \eqref{DEFOFF} that the second term on the right-hand side of \eqref{theroughbound} is bounded by 
\begin{align}
\nonumber&\frac1{\xi^n}\int_{-1}^{1}(1-y^2)^{n-2}\left(\big(y+\tfrac AB\big)^{-(n-1)}-\big(y+\tfrac{A'}{B'}\big)^{-(n-1)}\right)\,dy\\
&+\frac1{\xi^n}\int_{\lambda_-}^{\lambda_+}\frac{(1-y^2)^{n-2}}{(y+\frac{A'}{B'})^{n-1}}\,dy
+\frac1{\xi^n}\int_{-\alpha}^{\alpha}\frac{(1-y^2)^{n-2}}{(y+\frac{A'}{B'})^{n-1}}\,dy.\nonumber
\end{align}
We call the terms above $I_1$, $I_2$ and $I_3$ respectively and recall that we need to show that $I_j=O(\delta^{1/2}\xi^{-n})$ for $1\leq j\leq3$.

We now estimate the integral $I_1$ using the same basic idea as in the proof of Theorem \ref{theoremonasymptotics}. We notice that 
\begin{align*}
\frac{A'}{B'}-\frac AB=O\left(\frac{\delta}{\xi^2}\right),
\end{align*} 
and hence we also obtain 
\begin{align*}
\big(y+\tfrac AB\big)^{-(n-1)}-\big(y+\tfrac{A'}{B'}\big)^{-(n-1)}=O\bigg(\frac{\frac{A'}{B'}-\frac AB}{(y+\frac{A}{B})^{n}}\bigg)=O\bigg(\frac{\delta}{\xi^2(y+\frac{A}{B})^{n}}\bigg).
\end{align*}
It follows that 
\begin{align*}
I_1=O\left(\frac{\delta}{\xi^{n+2}}\right)\int_{-1}^1\frac{(1-y^2)^{n-2}}{(y+\frac{A}{B})^{n}}\,dy.
\end{align*}
Hence, expanding the numerator as in \eqref{2timesbinomial}, we investigate expressions of the form
\begin{equation*}
\frac{\delta}{\xi^{n+2}}\big(1-\tfrac{A}{B}\big)^{j_2+k}\int_{-1}^{1}\big(y+\tfrac{A}{B}\big)^{j_1-j_2-2}\,dy.
\end{equation*}
Using straightforward estimates, we obtain the bound
\begin{align*}
O\left(\frac{\delta}{\xi^{n+2}B^{2(j_2+k)}}\right)
&\begin{cases}
1& \textrm{if $j_1-j_2-2\geq0$,}\\
|\log(A/B-1)|& \textrm{if $j_1-j_2-2=-1$,}\\
\big(\frac{A}{B}-1\big)^{j_1-j_2-1}& \textrm{if $j_1-j_2-2<-1$,}
\end{cases}\\
=O_{\epsilon}\left(\frac{\delta}{\xi^{n+2}}\right)
&\begin{cases}
B^{-2(j_2+k)}& \textrm{if $j_1-j_2-2\geq0$,}\\
B^{-2(j_2+k)+\epsilon}& \textrm{if $j_1-j_2-2=-1$,}\\
B^{-2(j_1+k-1)}& \textrm{if $j_1-j_2-2<-1$,}
\end{cases}\\
=O_{\epsilon}\left(\delta\right)
&\begin{cases}
\xi^{-n-2(j_2+k+1)}& \textrm{if $j_1-j_2-2\geq0$,}\\
\xi^{-n-2(j_2+k+1)+\epsilon}& \textrm{if $j_1-j_2-2=-1$,}\\
\xi^{-n-2(j_1+k)}& \textrm{if $j_1-j_2-2<-1$.}
\end{cases}
\end{align*}
Since this is clearly $O(\delta\xi^{-n})$ in all cases, it only remains to estimate $I_2$ and $I_3$. We observe that $\alpha=O(\delta^{1/2})$. Hence $\lambda_+-\lambda_-=O(\delta^{1/2}\xi^{-2})$ and $1+\lambda_+=O(\xi^{-2})$. It follows that
\begin{align*}
I_2=O\bigg(\frac{(\lambda_+-\lambda_-)(1+\lambda_+)^{n-2}}{\xi^n(\frac{A'}{B'}-1)^{n-1}}\bigg)=O\bigg(\frac{\delta^{1/2}}{\xi^n}\bigg).
\end{align*}
Finally, using again that $\alpha=O(\delta^{1/2})$, we immediately find that also $I_3=O(\delta^{1/2}\xi^{-n})$. This finishes the proof.
\end{proof}

\begin{lemma} \label{ob} 
Fix $l>0$. Then the function $f_{\xi}(l)$ is $C^1$ in $\xi$ precisely when $\xi\notin\{2\sinh(l/2), \sinh l\}$.
\end{lemma}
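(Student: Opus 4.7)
The plan is to write $f_\xi(l) = \xi^{-n} \mathcal{I}(\xi)$ with $\mathcal{I}(\xi) := \int_{I(\xi,l)} h(y, l)\, dy$ and $h(y, l) := (1-y^2)^{n-2}(y + \coth l)^{-(n-1)}$, and analyze the $C^1$-regularity of $\xi \mapsto \mathcal{I}(\xi)$, which is equivalent to that of $f_\xi$. On each of the open intervals $(0, C), (C, B), (B, \infty)$, the endpoints of $I(\xi, l)$ are either constants or given by the smooth functions $\alpha(\xi, l), \lambda_\pm(\xi, l)$, and $h$ is smooth on a neighborhood of the closure of $I(\xi, l)$. By the Leibniz rule, $\mathcal{I}$ (and hence $f_\xi$) is $C^\infty$ on each of these intervals, so it suffices to examine the transition points $\xi = C$ and $\xi = B$.

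At $\xi = C$ one has $\lambda_+(C, l) = -\alpha(C, l)$, which follows by solving $\lambda_+ = -\alpha$ using \eqref{alpha}--\eqref{lambda} together with the identity $C^2 = 2(A-1)$. Applying Leibniz to the two descriptions of $I(\xi, l)$ valid for $\xi < C$ and $\xi > C$, the contributions from the endpoints $-1, \lambda_-, \alpha, 1$ cancel between left and right derivatives, leaving
\begin{equation*}
\mathcal{I}'(C^+) - \mathcal{I}'(C^-) = -h(-\alpha(C), l)\bigl(\alpha'(C) + \lambda_+'(C)\bigr).
\end{equation*}
A direct computation using $\alpha' = -\xi/(B^2 \alpha)$, the explicit derivative of $\lambda_+$, and the relation $\alpha(C)(C^2 + 2) = C^2 A/B$ (which at $\xi = C$ rearranges to $\alpha(C) = (A-1)/B$) yields
\begin{equation*}
\alpha'(C) + \lambda_+'(C) = -\frac{4}{CB} \neq 0.
\end{equation*}
Since $-\alpha(C) \in (-1, 0)$ and $-\alpha(C) + A/B > -1 + \coth l > 0$, we have $h(-\alpha(C), l) > 0$, so the jump is nonzero. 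Thus the right and left derivatives of $f_\xi$ at $\xi = C$ disagree, and $f_\xi$ is not $C^1$ there.

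At $\xi = B$ the stronger statement holds that $f_\xi$ is not even differentiable. For $\xi > B$ we have $I(\xi, l) = [-1, 1]$ constant, so $\mathcal{I}(\xi) \equiv K := \int_{-1}^1 h(y, l)\, dy$ and $f_\xi(l) = \xi^{-n} K$ is smooth from the right. For $\xi < B$ close to $B$, the complement of $I(\xi, l)$ in $[-1, 1]$ consists of the two shrinking intervals $[\lambda_-, \lambda_+]$ and $[-\alpha, \alpha]$; Taylor-expanding $h$ around their midpoints (namely $-\xi^2 A/(B(\xi^2+1))$ and $0$) gives
\begin{equation*}
K - \mathcal{I}(\xi) = 2\alpha\left(\frac{h(-B/A, l)}{\xi^2 + 1} + h(0, l)\right) + O(\alpha^3).
\end{equation*}
Since $h(-B/A, l), h(0, l) > 0$ and $\alpha(\xi) \sim \sqrt{2(B-\xi)/B}$ as $\xi \to B^-$, we deduce $f_B(l) - f_\xi(l) \asymp \sqrt{B - \xi}$, so the left derivative of $f_\xi$ at $\xi = B$ is $+\infty$. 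In particular $f_\xi$ is not $C^1$ at $\xi = B$. The only nontrivial step is the identity $\alpha'(C) + \lambda_+'(C) = -4/(CB)$; everything else reduces to routine bookkeeping with Leibniz's rule and a standard Taylor expansion.
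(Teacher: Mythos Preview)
Your proof is correct and follows essentially the same strategy as the paper: both reduce to studying $\mathcal I(\xi)=\xi^n f_\xi(l)$, use the Leibniz rule to see smoothness on the three open regimes, and then examine the transitions at $\xi=C$ and $\xi=B$. The differences are only in execution: at $\xi=C$ the paper simply observes that the jump $-F_1(\lambda_+,l)(\lambda_+)_\xi-F_1(-\alpha,l)\alpha_\xi$ is strictly positive by sign considerations, while you go further and compute it explicitly as $4h(-\alpha(C),l)/(CB)$; at $\xi=B$ the paper shows $\mathcal I'(\xi)\to+\infty$ from the left via the blow-up of $\alpha_\xi$ and $(\lambda_\pm)_\xi$, whereas you obtain the equivalent statement $f_B(l)-f_\xi(l)\asymp\sqrt{B-\xi}$ by a direct Taylor expansion of the two shrinking gaps in $I(\xi,l)$.
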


\begin{proof}
 Consider the
function $h(\xi)=\xi^nf_{\xi}(l)$. Clearly the claim of the lemma is
equivalent to $h(\xi)$ being  $C^1$ precisely when
$\xi\notin\{2\sinh(l/2), \sinh l\}$. Using the same notation as in the
proof of
Lemma~\ref{lemmafderivative}, it follows from \eqref{DEFOFF} that away from
$\xi\in \{C,B\}=\{2\sinh(l/2), \sinh l\}$ the function $h$ is smooth with derivative
\begin{equation*}
  h'(\xi)=\begin{cases}-\big(F_1(\alpha,l)\alpha_\xi-F_1(\lambda_-,l)(\lambda_-)_\xi\big)& \textrm{if $\xi<C$,}\\
-\big(F_1(\lambda_+,l)(\lambda_+)_\xi-F_1(\lambda_-,l)(\lambda_-)_\xi\big)-\big(F_1(\alpha,l)+F_1(-\alpha,l)\big)\alpha_\xi& \textrm{if $C<\xi< B$,}\\
0& \textrm{if $B<\xi$.}\\
\end{cases}
\end{equation*}
Here
\begin{align*}
\alpha_\xi=\frac{d\alpha}{d\xi}&=-\frac{\xi}{B^2(1-\xi^2/B^2)^{1/2}},\\
(\lambda_{\pm})_\xi=\frac{d\lambda_{\pm}}{d\xi}&=\frac{-2\xi A/B\pm\alpha_\xi(\xi^2+1)-2\xi(\pm\alpha)}{(\xi^2+1)^2}.
\end{align*}
These quantities are defined when $\xi<B$. We notice that $\alpha_\xi,
(\lambda_{+})_\xi<0$ and that
$\alpha_\xi\to-\infty$ and $(\lambda_{\pm})_\xi\to\mp\infty$ as $\xi\to B$. 
Combining this with $F_1(x,l)>0$, we find that $h'(\xi)\to+\infty$ as
$\xi\to B$ from below, while $h'(\xi)\to 0$ as $\xi\to B$ from above. 
 This shows that $h$
is not continuously differentiable at $\xi=B=\sinh(l)$. To conclude the
same at $\xi=C=2\sinh(l/2)$, we note that 
\begin{equation*}
h'(C+\epsilon)-h'(C-\epsilon)\to
\left.\big(-F_1(\lambda_+,l)(\lambda_+)_\xi-F_1(-\alpha,l)\alpha_\xi\big)\right\vert_{\xi=C}\quad\textrm{as  }\epsilon \to 0,
\end{equation*} 
which by the above considerations is strictly
positive. The claim follows.
\end{proof}
\end{appendices}

\bibliographystyle{abbrv}

\begin{thebibliography}{10}

\bibitem{BocaPasolPopaZaharescu:2014}
F.~Boca, V.~Pa{\c{s}}ol, A.~Popa, and A.~Zaharescu.
\newblock Pair correlation of angles between reciprocal geodesics on the
  modular surface.
\newblock {\em Algebra \& Number Theory}, 8(4):999--1035, 2014.

\bibitem{Boca:2007a}
F.~P. Boca.
\newblock Distribution of angles between geodesic rays associated with
  hyperbolic lattice points.
\newblock {\em Q. J. Math.}, 58(3):281--295, 2007.

\bibitem{BocaPopaZaharescu:2013}
F.~P. Boca, A.~A. Popa, and A.~Zaharescu.
\newblock Pair correlation of hyperbolic lattice angles.
\newblock ar{X}iv: 1302.5067. To appear in Internat. J. Number Theory, 2013.

\bibitem{Buser:1992a}
P.~Buser.
\newblock {\em Geometry and spectra of compact {R}iemann surfaces}, volume 106
  of {\em Progress in Mathematics}.
\newblock Birkh\"auser Boston Inc., Boston, MA, 1992.

\bibitem{Chavel:1984a}
I.~Chavel.
\newblock {\em Eigenvalues in {R}iemannian geometry}, volume 115 of {\em Pure
  and Applied Mathematics}.
\newblock Academic Press Inc., Orlando, FL, 1984.
\newblock Including a chapter by Burton Randol, With an appendix by Jozef
  Dodziuk.

\bibitem{Delsarte:1942a}
J.~Delsarte.
\newblock Sur le gitter fuchsien.
\newblock {\em C. R. Acad. Sci. Paris}, 214:147--149, 1942.

\bibitem{ElstrodtGrunewaldMennicke:1988a}
J.~Elstrodt, F.~Grunewald, and J.~Mennicke.
\newblock Arithmetic applications of the hyperbolic lattice point theorem.
\newblock {\em Proc. London Math. Soc. (3)}, 57(2):239--283, 1988.

\bibitem{Good:1983b}
A.~Good.
\newblock {\em Local analysis of {S}elberg's trace formula}, volume 1040 of
  {\em Lecture Notes in Mathematics}.
\newblock Springer-Verlag, Berlin, 1983.

\bibitem{GorodnikNevo:2012}
A.~Gorodnik and A.~Nevo.
\newblock Counting lattice points.
\newblock {\em J. Reine Angew. Math.}, 663:127--176, 2012.

\bibitem{Gunther:1980a}
P.~G\"unther.
\newblock Gitterpunktprobleme in symmetrischen {R}iemannschen {R}\"aumen vom
  rang 1.
\newblock {\em Math. Nachr.}, 94:5--27, 1980.

\bibitem{Helgason:1962}
S.~Helgason.
\newblock {\em Differential geometry and symmetric spaces}.
\newblock Pure and Applied Mathematics, Vol. XII. Academic Press, New York,
  1962.

\bibitem{Helgason:2000a}
S.~Helgason.
\newblock {\em Groups and geometric analysis}, volume~83 of {\em Mathematical
  Surveys and Monographs}.
\newblock American Mathematical Society, Providence, RI, 2000.
\newblock Integral geometry, invariant differential operators, and spherical
  functions. Corrected reprint of the 1984 original.

\bibitem{Huber:1956}
H.~Huber.
\newblock \"{U}ber eine neue {K}lasse automorpher {F}unktionen und ein
  {G}itterpunktproblem in der hyperbolischen {E}bene. {I}.
\newblock {\em Comment. Math. Helv.}, 30:20--62 (1955), 1956.

\bibitem{Huber:1959a}
H.~Huber.
\newblock Zur analytischen {T}heorie hyperbolischen {R}aumformen und
  {B}ewegungsgruppen.
\newblock {\em Math. Ann.}, 138:1--26, 1959.

\bibitem{KelmerKontorovich:2013}
D.~Kelmer and A.~Kontorovich.
\newblock On the pair correlation density for hyperbolic angles.
\newblock ar{X}iv:1308.0754. To appear in Duke Math J, 2013.

\bibitem{KontorovichOh:2011}
A.~Kontorovich and H.~Oh.
\newblock Apollonian circle packings and closed horospheres on hyperbolic
  3-manifolds.
\newblock {\em J. Amer. Math. Soc.}, 24(3):603--648, 2011.
\newblock With an appendix by Oh and Nimish Shah.

\bibitem{LaxPhillips:1982a}
P.~D. Lax and R.~S. Phillips.
\newblock The asymptotic distribution of lattice points in {E}uclidean and
  non-{E}uclidean spaces.
\newblock {\em J. Funct. Anal.}, 46(3):280--350, 1982.

\bibitem{Levitan:1987a}
B.~M. Levitan.
\newblock {Asymptotic formulae for the number of lattice points in Euclidean
  and Lobachevskij spaces.}
\newblock {\em Russ. Math. Surv.}, 42(3):13--42, 1987.

\bibitem{MarklofVinogradov:2014}
J.~Marklof and I.~Vinogradov.
\newblock Directions in hyperbolic lattices.
\newblock ar{X}iv:1409.3764, 2014.

\bibitem{Nicholls:1983a}
P.~Nicholls.
\newblock A lattice point problem in hyperbolic space.
\newblock {\em Michigan Math. J.}, 30(3):273--287, 1983.

\bibitem{Oh:2014}
H.~Oh.
\newblock Harmonic analysis, ergodic theory and counting for thin groups.
\newblock In {\em Thin Groups and Superstrong Approximation}, volume~61, pages
  179--210. Cambridge University Press, 2014.

\bibitem{Patterson:1975a}
S.~J. Patterson.
\newblock A lattice-point problem in hyperbolic space.
\newblock {\em Mathematika}, 22(1):81--88, 1975.

\bibitem{PhillipsRudnick:1994a}
R.~Phillips and Z.~Rudnick.
\newblock The circle problem in the hyperbolic plane.
\newblock {\em J. Funct. Anal.}, 121(1):78--116, 1994.

\bibitem{Ratcliffe:1994a}
J.~G. Ratcliffe.
\newblock {\em Foundations of hyperbolic manifolds}, volume 149 of {\em
  Graduate Texts in Mathematics}.
\newblock Springer-Verlag, New York, 1994.

\bibitem{RisagerTruelsen:2010}
M.~S. Risager and J.~L. Truelsen.
\newblock Distribution of angles in hyperbolic lattices.
\newblock {\em Q. J. Math.}, 61(1):117--133, 2010.

\bibitem{Venkatesh:2010}
A.~Venkatesh.
\newblock Sparse equidistribution problems, period bounds and subconvexity.
\newblock {\em Ann. of Math. (2)}, 172(2):989--1094, 2010.

\end{thebibliography}
\def\cprime{$'$}

\end{document}